\let\oldtocsection=\tocsection
\let\oldtocsubsection=\tocsubsection 
\let\oldtocsubsubsection=\tocsubsubsection
\renewcommand{\tocsection}[2]{\vspace{0.5em}\hspace{0em}\oldtocsection{#1}{#2}}
\renewcommand{\tocsubsection}[2]{\vspace{0.5em}\hspace{1em}\oldtocsubsection{#1}{#2}}
\renewcommand{\tocsubsubsection}[2]{\vspace{0.5em}\hspace{2em}\oldtocsubsubsection{#1}{#2}}
\def\nline{\\ \noalign{\medskip}}
\newtheorem{theoreme}{Theorem}[section]
\newtheorem{pro}[theoreme]{Proposition}
\newtheorem{lemma}[theoreme]{Lemma}
\newtheorem{rem}[theoreme]{Remark}
\newtheorem{definition}[theoreme]{Definition}
\theoremstyle{definition}
\numberwithin{equation}{section}
 \renewenvironment{proof}{{\bfseries \noindent Proof.}}{\demo}
\newcommand\xqed[1]{%
  \leavevmode\unskip\penalty9999 \hbox{}\nobreak\hfill
  \quad\hbox{#1}}
\newcommand\demo{\xqed{$\square$}}
\def\u2{\u^2}
\def\u3{\u^3}
\def\u4{\u^4}
\def\u5{\u^5}
\def\y1{\y^1}
\def\y2{\y^2}
\def\y3{\y^3}
\def\y4{\y^4}
\def\y5{\y^5}
\def\lam{\lambda}
\def\R{\mathbb R}
\def\C{\mathbb C}
\def\HH{\mathcal H}
\def\AA{\mathcal A}
\def\la {{\lambda}}
\newcommand {\nc}   {\newcommand}
\nc {\be}   {\begin{equation}} \nc {\ee}   {\end{equation}} \nc
\nc {\eeq}  {\end{eqnarray}} \nc {\beqs}
\nc {\eeqs} {\end{eqnarray*}}
\def\edc{\end{document}}
\providecommand{\abs}[1]{\lvert#1\rvert}%absolute value
\providecommand{\norm}[1]{\lVert#1\rVert}%norm   
\DeclareMathOperator{\sign}{sign}   
\begin{document}
\title[\fontsize{7}{9}\selectfont  ]{Stability results of an elastic/viscoelastic transmission problem of locally coupled waves with non smooth coefficients}
\author{Mohammad Akil$^{1}$}
\author{Ibtissam Issa$^{1,2}$}
\author{Ali Wehbe$^{1}$}
\address{$^1$Lebanese University, Faculty of sciences 1, Khawarizmi Laboratory of  Mathematics and Applications-KALMA, Hadath-Beirut, Lebanon.}
\address{$^2$ Aix-Marseille University, I2M, Marseille-France.}
\email{mohamadakil1@hotmail.com, ibtissam.issa@etu.univ-amu.fr, ali.wehbe@ul.edu.lb}
\keywords{Wave equation; Kelvin-Voigt damping; Semigroup; Stability.}
%\date{}
%%%%%%%%%%%%%%%%%%%%%%%%%%%%%%%%%%%%%%%%%%%%%%%%%%%%%%%%%%%%
%Keyword
%%%%%%%%%%%%%%%%%%%%%%%%%%%%%%%%%%%%%%%%%%%%%%%%%%%%%%%%%%%%
%\keywords{}
%%%%%%%%%%%%%%%%%%%%%%%%%%%%%%%%%%%%%%%%%%%%%%%%%%%%%%%%%%%%
%Abstract
%%%%%%%%%%%%%%%%%%%%%%%%%%%%%%%%%%%%%%%%%%%%%%%%%%%%%%%%%%%%

%\pagenumbering{roman}
%\maketitle
%\tableofcontents
%\clearpage
%\pagenumbering{arabic}
%\setcounter{page}{1}
%\setcounter{equation}[section]
\setcounter{equation}{0}
%\abstractname{.}
\begin{abstract}
We investigate the stabilization of a locally coupled wave equations with only one internal viscoelastic damping of Kelvin-Voigt type (see System \eqref{eq1}-\eqref{eq3}). The main novelty in this paper is that both the damping and the coupling coefficients are non smooth (see \eqref{bc}). First, using a general criteria of Arendt-Batty, combined with an uniqueness result, we prove that our system is strongly stable. Next, using a spectrum approach, we prove the non-exponential (uniform) stability of the system. Finally, using a frequency domain approach, combined with a piecewise multiplier technique and the construction of a new multiplier satisfying some ordinary differential equations, we show that the energy of smooth solutions of the system decays polynomially of type $t^{-1}$.

%Indeed,  The method is based on the frequency domain approach combining with multiplier method .
%The purpose of this paper is to investigate the stabilization of a system of two wave equations coupled through velocities with only one localized internal Kelvin-Voigt damping. The main novelty in this paper is that the considered system is coupled and that the damping and coupling coefficients are discontinuous. %Firstly, using a unique continuation result, we prove that the system is strongly stable. Secondly, we show that the system lacks exponential stability, and we establish a polynomial energy decay rate of type $t^{-1}$ for smooth initial data.
%First, using a general criteria of Arendt and Batty with a unique continuation result we show the strong stability of our system in the absence of the compactness of the resolvent. Next, we show that our system lacks exponential stability. Hence, we look for a polynomial decay rate for smooth initial data for our system by applying a frequency domain approach combining with a multiplier method. Indeed, we establish a polynomial energy decay rate of type $t^{-1}$ for smooth initial data.
\end{abstract}
\maketitle
\pagenumbering{roman}
\maketitle
\tableofcontents
\clearpage
\pagenumbering{arabic}
\setcounter{page}{1}
\newpage
\section{Introduction} 
\subsection{Motivation and aims}
There are several mathematical models representing physical damping. The most often encountered type of damping in vibration studies are linear viscous damping and Kelvin-Voigt damping which are special cases of proportional damping. Viscous damping usually models external friction forces such as air resistance acting on the vibrating structures and is thus called "external damping", while Kelvin-Voigt damping originates from the internal friction of the material of the vibrating structures and thus called "internal damping". In 1988, F. Huang in \cite{Huang-falun} considered a wave equation with globally distributed Kelvin-Voigt damping, i.e. the damping coefficient is strictly positive on the entire spatial domain. He proved that the corresponding semigroup is not only exponentially stable, but also is analytic (see Definition \ref{Defsta}, Theorem \ref{hp} and Theorem \ref{analytic} below). Thus, Kelvin-Voigt damping is stronger than the viscous damping when globally distributed. Indeed, it was proved that the semigroup corresponding to the system of wave equations with global viscous damping is exponentially stable but not analytic (see \cite{chen1} for the one dimensional system and \cite{bardos} for the higher dimensional system). However, the exponential stability of a wave equation is still true even if the viscous damping is localized, via a smooth or a non smooth damping coefficient, in a suitable subdomain satisfying some geometric conditions (see \cite{bardos}).  Nevertheless, when viscoelastic damping is distributed locally, the situation is more delicate and such comparison between viscous and viscoelastic damping is not valid anymore. Indeed, the stabilization of the wave equation with local Kelvin-Voigt damping is greatly influenced by the smoothness of the damping coefficient and the region where the damping is localized (near or faraway from the boundary) even in the one-dimensional case. So, the stabilization of systems (simple or coupled) with local Kelvin-Voigt damping has attracted the attention of many authors  (see the Literature below for the history of this kind of damping). From a mathematical point of view, it is important to study the stability of a system coupling a locally damped  wave equation with a conservative one. Moreover, the study of this kind of systems is also motivated by several physical considerations and occurs in many applications in engineering and mechanics. In this direction, recently in 2019, Hassine and Souayeh in \cite{hassine2019}, studied the stabilization of a system of global coupled wave equations with one localized Kelvin-Voigt damping. The system is described by
\begin{equation}\label{sys2019}
\left\{\begin{array}{l}
u_{tt}-\left(u_x+b(x)u_{tx}\right)_x+v_t=0,\hspace{0.7cm} (x,t)\in (-1,1)\times \R^+,\\ \\
v_{tt}-cv_{xx}-u_t=0,\hspace{2.5cm}(x,t)\in (-1,1)\times \R^+,\\ \\
u(0,t)=v(0,t)=0,u(1,t)=v(1,t)=0,\hspace{0.5cm} t>0,\\ \\
u(x,0)=u_0(x),u_{t}(x,0)=u_1(x),\hspace{1cm} x\in (-1,1),\\ \\
v(x,0)=v_0(x),v_{t}(x,0)=v_1(x),\hspace{1.3cm} x\in (-1,1),
\end{array}\right.
\end{equation}
where $c>0$, and $b\in L^{\infty}(-1,1)$ is a non-negative function. They assumed that the damping coefficient is given by $b(x) = d\mathds{1}_{[0,1]}(x)$, where $d$ is a strictly positive constant. The Kelvin-Voigt damping 
$\left(b(x)u_{tx}\right)_x$ is applied at the first equation and the second equation is indirectly damped through the coupling between the two equations.
Under the two conditions that the Kelvin-Voigt damping is localized near the boundary and the two waves are globally coupled, they obtained a polynomial energy decay rate of type $t^{-{\frac{1}{6}}}$. Then the stabilization of System \eqref{sys2019} in the case where the Kelvin-Voigt damping is localized in an arbitrary subinterval of $(-1,+1)$ and the two waves are locally coupled has been left as an open problem. In addition, we believe that the energy decay rate obtained in  \cite{hassine2019} can be improved. So, we are interested in studying this open problem.   

The main aim of this paper is to study the stabilization of a system of localized coupled wave equations with only one Kelvin-Voigt damping localized via non-smooth coefficient in a subinterval of the domain. The system is described by
\begin{eqnarray}
u_{tt}-\left(au_x+b(x)u_{tx}\right)_x+c(x)\ y_t&=&0,\quad (x,t)\in (0,L)\times \R^+,\label{eq1}\\ 
y_{tt}-y_{xx}-c(x)\ u_t&=&0,\quad (x,t)\in (0,L)\times \R^+,\label{eq2}
\end{eqnarray}
with fully Dirichlet boundary conditions, 
\begin{equation}\label{eq3}
u(0,t)=u(L,t)=y(0,t)=y(L,t)=0,\quad \forall\ t\in \R^+,
\end{equation}
where
\begin{equation}\label{bc}
b(x)=\left\{\begin{array}{ccc}
b_0&\text{if}&x\in (\alpha_1,\alpha_3)\\
0&&\text{otherwise}
\end{array}
\right.
\quad \text{and}\quad c(x)=\left\{\begin{array}{ccc}
c_0&\text{if}&x\in (\alpha_2,\alpha_4)\\
0&&\text{otherwise}
\end{array}
\right.
\end{equation}
and $a>0, b_0>0$ and $c_0>0$, and where we consider $0<\alpha_1<\alpha_2<\alpha_3<\alpha_4<L$. This system is considered with the following initial data
\begin{equation}\label{eq4}
u(\cdot,0)=u_0(\cdot),\ u_t(\cdot,0)=u_1(\cdot),\ y(\cdot,0)=y_0(\cdot)\quad \text{and}\quad y_t(\cdot,0)=y_1(\cdot).
\end{equation}
\begin{center}
\begin{tikzpicture}

\draw[->] (0,0)--(6,0);
\draw[->](0,0)--(0,3);

\draw[-,red](1,2)--(3,2);
\draw[-,blue](2,1)--(4,1);

          \node at (1,0) [circle, scale=0.3, draw=black!80,fill=black!80] {};
\node[black,below] at (1,0){\scalebox{0.5}{$\alpha_1$}};   
      
            \node at (2,0) [circle, scale=0.3, draw=black!80,fill=black!80] {};
\node[black,below] at (2,0){\scalebox{0.5}{$\alpha_2$}};        
       
                 \node at (3,0) [circle, scale=0.3, draw=black!80,fill=black!80] {};
\node[black,below] at (3,0){\scalebox{0.5}{$\alpha_3$}};   

          \node at (4,0) [circle, scale=0.3, draw=black!80,fill=black!80] {};
\node[black,below] at (4,0){\scalebox{0.5}{$\alpha_4$}};  

          \node at (5,0) [circle, scale=0.3, draw=black!80,fill=black!80] {};
\node[black,below] at (5,0){\scalebox{0.5}{$L$}};

          \node at (0,0) [circle, scale=0.3, draw=black!80,fill=black!80] {};
\node[black,below] at (0,0){\scalebox{0.5}{$0$}}; 

\node[red,above] at (2,2){\scalebox{0.5}{$b_0$}};   

\node[blue,above] at (3,1){\scalebox{0.5}{$c_0$}};

\end{tikzpicture}

\end{center}

 \subsection{Literature}% There are several mathematical models representing physical damping. The most often encountered type of damping in vibration studies are linear viscous damping and Kelvin-Voigt damping which are special cases of proportional damping. Viscous damping usually models external friction forces such as air resistance acting on the vibrating structures and is thus called "external damping", while Kelvin-Voigt damping originates from the internal friction of the material of the vibrating structures and thus called "internal damping". When the Kelvin-Voigt damping is global (i.e., distributed over the entire spatial domain), the semigroup is not only exponentially stable but also analytic (see \cite{Huang-falun}). Thus, Kelvin-Voigt damping is stronger than the viscous damping when globally distributed since the latter only produces exponential stability and not any level of smoothing property (see \cite{chen1} for the one dimensional system and \cite{bardos} for the higher dimensional system).
The wave is created when a vibrating source disturbs the medium. In order to restrain those vibrations, several dampings can be added such as Kelvin-Voigt damping which is originated from the extension or compression of the vibrating particles. This damping is a viscoelastic structure having properties of both elasticity and viscosity. In the recent years,
many researchers showed interest in problems involving this kind of damping (local or global) where different types of stability have been showed. 
%When the damping is localized (i.e., distributed only on the proper subset of the spatial domain), such a comparison is not valid anymore. So, the localization of the Kelvin-Voigt coefficient plays a critical role in the stabilization of the system. The stabilization of systems (simple or coupled) with Kelvin-Voigt damping has attracted the attention of many authors.
In particular, in the one dimensional  case, it was proved that the smoothness of the damping coefficient affects critically the studying of the stability  and regularity of the solution of the system. Indeed, in the one dimensional case we can consider the following system
\begin{equation}\label{introeq}
\left\{\begin{array}{l}
u_{tt}-\left(u_x+b_1(x)u_{tx}\right)_x=0,\hspace{1cm} -1\leq x\leq 1, t>0,\\ \\
u(1,t)=u(-1,t)=0,\hspace{2.2cm} t>0,\\ \\
u(x,0)=u_0(x),u_{t}(x,0)=u_1(x),\quad -1\leq x\leq 1,
\end{array}\right.
\end{equation}
with $b_1\in L^{\infty}(-1,1)$ and 
\begin{equation}
b_1(x)=\left\{\begin{array}{ccc}
0&\text{if}&x\in (0,1),\\
a_1(x)&\text{if}& x\in (-1,0),
\end{array}
\right.
\end{equation}
where the function $a_1(x)$ is non-negative.
The case of local Kelvin-Voigt damping was first studied in 1998 \cite{chen-1998,liu-liu-1998}, it was proved that the semigroup loses exponential stability and smooth property when the damping is local and $a_1=1$ or $b_1(\cdot)$ is the characteristic function of any subinterval of the domain. This surprising result initiated the study of an  elastic system with local Kelvin-Voigt damping. In 2002, K. Liu and Z. Liu  proved that system \eqref{introeq} is exponentially stable if $b_1^{\prime}(.)\in C^{0,1}([-1,1])$ (see \cite{liu-liu-2002}). Later, in \cite{zhang-2010}, the smoothness on $b_1$ was weakened to $b_1(\cdot)\in C^{1}([-1,1])$ and a condition on $a_1$ was taken. In 2004, Renardy's results \cite{Renardy} hinted that the solution of the system \eqref{introeq} may be exponentially stable under smoother conditions on the damping coefficient. This result was confirmed by  K. Liu, Z. Liu and Q. Zhang in \cite{liu-liu-zhang2017}. On the other hand, Liu and Rao in 2005 (see \cite{RaoLiu01}) proved that the semigroup corresponding to system \eqref{introeq} is polynomially stable of order almost 2 if $a_1(.)\in C(0,1)$ and $a_1(x)\geq a_1 \geq 0$ on (0,1). The optimality of this order was later proved in \cite{alves-rivera-2014}. In 2014, Alves and al., in \cite{Alves2}, considered the transmission problem of a material composed of three components; one of them is a Kelvin–Voigt viscoelastic material, the second is an elastic material (no dissipation) and the third is an elastic material inserted with a frictional damping mechanism. They proved that the rate of decay depends on the position of each component. When the viscoelastic component is not in the middle of the material, they proved exponential stability of the solution. However, when the viscoelastic part is in the middle of the material, the solution decays polynomially as $t^{-2}$. In 2016, under the assumption that the damping coefficient has a singularity at the interface of the damped and undamped regions and behaves like $x^{\alpha}$ near the interface, it was proven by Liu and Zhang \cite{liu-zhang-2016} that the semigroup corresponding to the system is polynomially or exponentially stable and the decay rate depends on the parameter $\alpha\in (0,1]$. In \cite{Ammari03}, Ammari and al. generalized the  cases  of single elastic string with local Kelvin-Voigt damping (in \cite{liu-liu-2002,Ammari04}). They studied the stability of a tree of elastic strings with local Kelvin-Voigt damping on some of the edges. They proved exponential/polynomial  stability of the system under  the  compatibility  condition  of displacement and strain and the continuity condition of damping coefficients at the vertices of the  tree.

In \cite{HASSINE01}, Hassine considered the longitudinal and transversal vibrations of the transmission Euler-Bernoulli beam with Kelvin-Voigt damping distributed locally on any subinterval of
the region occupied by the beam. He proved that the semigroup associated with the equation for the transversal motion of the beam is exponentially stable, although the semigroup associated with the equation for the longitudinal motion of the beam is polynomially stable of type $t^{-2}$. In \cite{HASSINE02}, Hassine considered a beam and a wave equation coupled on an elastic beam through transmission conditions with locally distributed Kelvin-Voigt damping that acts through one of the two equations only. He proved a polynomial energy decay  rate of type $t^{-2}$ for  both cases where the dissipation acts through the beam equation and through the wave equation. In 2016, Oquendo and Sanez studied the wave equation with internal coupled terms where the Kelvin-Voigt damping is global in one equation and the second equation is conservative. They showed that the semigroup loses speed and decays with the rate $t^{-\frac{1}{4}}$ and they proved that this decay rate is optimal (see \cite{Oquendo-2016}).

Let us mention some of the results that have been established for the case of wave equation with Kelvin-Voigt damping in the multi-dimensional setting. In \cite{Huang-falun}, the author proved that when the Kelvin-Voigt damping div$(d(x)\nabla u_{t})$ is globally distributed, i.e. $d(x)\geq d_0>0$ for almost all $x\in\Omega$, the wave equation generates an analytic semi-group. In \cite{Liu-Rao-2006}, the authors considered the wave equation with local visco-elastic damping distributed around the boundary of $\Omega$. They proved that the energy of the system decays exponentially to zero as t goes to infinity for all usual initial data under the assumption that the damping coefficient satisfies: $d\in C^{1,1}(\Omega)$, $\Delta d\in L^{\infty}(\Omega)$ and $|\nabla d(x)|^2\leq M_0 d(x)$ for almost every $x$ in $\Omega$ where $M_0$ is a positive constant. On the other hand, in \cite{Tebou}, the author studied the stabilization of the wave equation with Kelvin-Voigt damping. He established a polynomial energy decay rate of type $t^{-1}$ provided that the damping region is
localized in a neighborhood of a part of the boundary and verifies certain geometric condition. Also in \cite{Nicaise-Pignotti2016}, under the same assumptions on $d$, the authors established the exponential stability of the wave equation with local Kelvin-Voigt damping localized around a part of the boundary and an extra boundary with time delay where they added an appropriate geometric condition. Later on, in \cite{Ammari_2019}, the wave equation with Kelvin-Voigt damping localized in a subdomain $\omega$ far away from the boundary without any geometric conditions was considered. The authors established a logarithmic energy decay rate for smooth initial data. Further more, in \cite{Rayan2019}, the authors investigate the stabilization of the wave equation with Kelvin-Voigt damping localized via non smooth coefficient in a suitable sub-domain of the whole bounded domain. They proved a polynomial stability result in any space dimension, provided that the damping region satisfies some geometric conditions.

\subsection{Description of the paper} This paper is organized as follows: In Subsection \ref{Section-2.1}, we reformulate the system \eqref{eq1}-\eqref{eq4} into an evolution system and we prove the well-posedness of our system by semigroup approach. In  Subsection \ref{Section-2.2}, using a general criteria of Arendt and Batty, we show the strong stability of our system in the absence of the compactness of the resolvent. In Section \ref{section-3}, we prove that the system lacks exponential stability using two different approaches. The first case is by taking the damping and the coupling terms to be globally defined, i.e $b(x)=b_0>0$ and $c(x)=c_0>0$ and we prove the lack of exponential stability using Borichev-Tomilov results. The second case is by taking only the damping term to be localized and we use the method which was developed by Littman and Markus. In Section \ref{Section-4}, we look for a polynomial decay rate by applying a frequency domain approach combined with a multiplier method  based on the exponential stability of an auxiliary problem, where we establish a polynomial energy decay for smooth solution of type $t^{-1}$ .

%%%%%%%%%%%%%%%%%%%%%%%%%%%%%%%%%%%%%%%%%%%%%%%%%%%%%%%%%%%%%%%%%%%%%%%%%%%%%%%%%%%%%%%%%%%%%%%%%%%%%%%%%%%%%%%%%%%%%%%%%%%%%%%%%%%%%%%%%%%%%%%%%%%%%%%%%%%%%%%%%%%%%%%%%%%%%%%%%%
\section{Well-Posedness and Strong Stability}\label{Section-2}
\noindent In this section, we  study the strong stability of System \eqref{eq1}-\eqref{eq4}. First, using a semigroup approach, we establish well-posedness result of our system.
%%%%%%%%%%%%%%%%%%%%%%%%%%%%%%%%%%%%%%%%%%%%%%%%%%%%%%%%%%%%
%%%%%%%%%%%%%%%%%%%%%%%%%%%%%%%%%%%%%%%%%%%%%%%%%%%%%%%%%%%%
%Subsection
%%%%%%%%%%%%%%%%%%%%%%%%%%%%%%%%%%%%%%%%%%%%%%%%%%%%%%%%%%%%
%%%%%%%%%%%%%%%%%%%%%%%%%%%%%%%%%%%%%%%%%%%%%%%%%%%%%%%%%%%%
\subsection{ Well-Posedness}\label{Section-2.1}
\noindent Firstly, we reformulate System \eqref{eq1}-\eqref{eq4} into an evolution problem in an appropriate Hilbert state space.\\
\noindent The energy of System \eqref{eq1}-\eqref{eq4} is given by
	\begin{equation*}
	E(t)=\frac{1}{2}\int_0^L \left(|u_t|^2+a|u_x|^2+|y_t|^2+|y_x|^2\right)dx.
	\end{equation*}
Let $\left(u,u_{t},y,y_{t}\right)$ be a regular solution of \eqref{eq1}-\eqref{eq4}. Multiplying  \eqref{eq1}, \eqref{eq2}    by $u_t,\ y_t,$  respectively, then  using  the boundary conditions \eqref{eq3}, we get
	\begin{equation*}
	E^\prime(t)=- \int_0^L b(x)|u_{tx}|^2dx,
	\end{equation*}
using the definition of the function $b(x)$, we get 	$E^\prime(t)\leq0$. Thus, System \eqref{eq1}-\eqref{eq4} is dissipative in the sense that its energy is a non-increasing function with respect to the time variable $t$. Let us define the energy space $\mathcal{H}$ by
	\begin{equation*}
	\mathcal{H}=(H_0^1(0,L)\times L^2(0,L))^2.
	\end{equation*}

\noindent The energy space $\mathcal{H}$ is equipped with the inner product defined by
	$$
	\left<U,U_1\right>_\mathcal{H}=\int_{0}^Lv\overline{{v}}_1dx+a\int_{0}^Lu_x(\overline{{u}}_1)_xdx+\int_{0}^Lz\overline{{z}}_1dx+\int_{0}^Ly_x(\overline{{y}}_1)_xdx,
	$$
for  all  $U=\left(u,v,y,z\right)$ and $U_1=\left(u_1,v_1,y_1,z_1\right)$ in $\mathcal{H}$.  We use 
$\|U\|_{\mathcal{H}}$ to denote the corresponding norm.
We define the unbounded linear operator $\mathcal{A}: D\left(\mathcal{A}\right)\subset \mathcal{H}\longrightarrow \mathcal{H}$  by 
\begin{equation*}
D(\mathcal{A})=\left\{\begin{array}{l}
\vspace{0.15cm}\displaystyle
U=(u,v,y,z) \in\mathcal{H};\ y\in H^2\left(0,L\right)\cap H_0^1(0,L)\\ 
\\ \displaystyle
v,z\in H_0^1(0,L),(au_{x}+b(x)v_{x})_{x}\in L^2(0,L)
\end{array}\right\}
\end{equation*}
and for all  $U=\left(u, v,y, z\right)\in D\left(\mathcal{A}\right)$,
$$
\mathcal{A}\left(u, v,y, z\right)=\left(v,(au_{x}+b(x)v_{x})_{x}-c(x)z, z, y_{xx}+c(x)v \right)^{\top}.
$$

\noindent If $U=(u,u_t,y,y_t)$ is the state  of System \eqref{eq1}-\eqref{eq4}, then  this system is transformed into the first order evolution equation on the Hilbert space $\mathcal{H}$ given by
\begin{equation}\label{eq-2.9}
U_t=\mathcal{A}U,\quad
U(0)=U_0,
\end{equation}
where $U_0=(u_0,u_1,y_0,y_1)$.		
%%%%%%%%%%%%%%%%%%%%%%%%%%%%%%%%%%%%%%%%%%%%%%%%%%%%%%%%%%%%
%Proposition
%%%%%%%%%%%%%%%%%%%%%%%%%%%%%%%%%%%%%%%%%%%%%%%%%%%%%%%%%%%%
			\begin{pro}\label{Theorem-2.2}
\noindent The unbounded linear operator $\mathcal{A}$ is m-dissipative in the energy space $\mathcal{H}$.
		\end{pro}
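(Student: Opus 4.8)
The plan is to invoke the Lumer--Phillips theorem, so I must verify two things: that $\mathcal{A}$ is dissipative, i.e. $\mathrm{Re}\langle \mathcal{A}U,U\rangle_{\mathcal H}\le 0$ for every $U\in D(\mathcal A)$, and that it is maximal, i.e. $\mathrm{Range}(I-\mathcal A)=\mathcal H$.

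For dissipativity I would take $U=(u,v,y,z)\in D(\mathcal A)$, expand $\langle \mathcal A U,U\rangle_{\mathcal H}$ term by term from the definition of the inner product, and integrate by parts in the two elliptic contributions $\int_0^L (au_x+b(x)v_x)_x\bar v\,dx$ and $\int_0^L y_{xx}\bar z\,dx$. Since $v,z\in H_0^1(0,L)$, every boundary term vanishes. Upon collecting, the conservative contributions $a\int_0^L(v_x\bar u_x-u_x\bar v_x)$, $\int_0^L(z_x\bar y_x-y_x\bar z_x)$ and the coupling $\int_0^L c(x)(v\bar z-z\bar v)$ are each of the form $w-\bar w$, hence purely imaginary and disappear when the real part is taken. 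What survives is
\begin{equation*}
\mathrm{Re}\langle \mathcal A U,U\rangle_{\mathcal H}=-\int_0^L b(x)\,|v_x|^2\,dx\le 0,
\end{equation*}
because $b\ge 0$; this also recovers the energy identity announced before the statement.

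For maximality I would fix $F=(f_1,f_2,f_3,f_4)\in\mathcal H$ and solve $(I-\mathcal A)U=F$ for $U=(u,v,y,z)\in D(\mathcal A)$. The first and third equations give at once $v=u-f_1$ and $z=y-f_3$, both in $H_0^1(0,L)$; substituting these into the second and fourth reduces the problem to the coupled elliptic system
\begin{equation*}
u-\big((a+b(x))u_x\big)_x+c(x)y=g_1,\qquad y-y_{xx}-c(x)u=g_2,
\end{equation*}
with $g_1,g_2$ built from $F$. Rather than solving this in strong form — which would force me to differentiate the discontinuous coefficient $b$ — I would pose it variationally on $V:=H_0^1(0,L)\times H_0^1(0,L)$: find $(u,y)\in V$ with $B((u,y),(\varphi,\psi))=\mathcal L(\varphi,\psi)$ for all $(\varphi,\psi)\in V$, the bilinear form carrying the coupling as $\int_0^L c(x)\,y\bar\varphi$ and $-\int_0^L c(x)\,u\bar\psi$ (note that the awkward term $(b(f_1)_x)_x$ enters only through the well-defined boundary-free integral $\int_0^L b(x)(f_1)_x\bar\varphi_x$ in $\mathcal L$). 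Continuity of $B$ is immediate since $a,b,c\in L^\infty(0,L)$, and testing with $(\varphi,\psi)=(u,y)$ makes the two coupling terms combine into a purely imaginary quantity, so
\begin{equation*}
\mathrm{Re}\,B((u,y),(u,y))=\int_0^L|u|^2+\int_0^L(a+b(x))|u_x|^2+\int_0^L|y|^2+\int_0^L|y_x|^2,
\end{equation*}
which is bounded below by the squared $V$-norm since $a+b(x)\ge a>0$. Lax--Milgram then yields a unique $(u,y)\in V$.

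Finally I would check that the recovered $U$ lies in $D(\mathcal A)$: from $y-y_{xx}-c(x)u=g_2\in L^2$ I get $y_{xx}\in L^2$, hence $y\in H^2(0,L)\cap H_0^1(0,L)$, and the second equation gives $(au_x+b(x)v_x)_x=v+c(x)z-f_2\in L^2$; together with $v,z\in H_0^1(0,L)$ this places $U$ in $D(\mathcal A)$ and gives $(I-\mathcal A)U=F$. Thus $I-\mathcal A$ is onto and $\mathcal A$ is m-dissipative, density of $D(\mathcal A)$ in $\mathcal H$ being automatic for an m-dissipative operator on a Hilbert space. The only genuine point requiring care is the non-smoothness of $b$ and $c$; it is defused entirely by working in the weak formulation, where these coefficients are never differentiated, the localized damping weight $a+b(x)$ stays uniformly positive, and the skew coupling is invisible to the real part.
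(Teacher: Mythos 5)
Your proof is correct and follows essentially the same route as the paper: the identical integration-by-parts computation giving $\Re\langle\mathcal{A}U,U\rangle_{\mathcal H}=-\int_0^L b(x)|v_x|^2\,dx$ for dissipativity, and Lax--Milgram applied to a variational formulation of the resolvent equation for maximality. The only (harmless) deviation is that you solve $(I-\mathcal A)U=F$ whereas the paper solves $-\mathcal AU=F$ (thereby getting $0\in\rho(\mathcal A)$ directly), so your bilinear form carries the zeroth-order and skew coupling terms while the paper's reduces to the plain $H_0^1$ form; coercivity holds in both cases for exactly the reason you give.
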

%%%%%%%%%%%%%%%%%%%%%%%%%%%%%%%%%%%%%%%%%%%%%%%%%%%%%%%%%%%%
%proof of Proposition
%%%%%%%%%%%%%%%%%%%%%%%%%%%%%%%%%%%%%%%%%%%%%%%%%%%%%%%%%%%%
		\begin{proof}
			For all $U=(u,v,y,z)\in D\left(\mathcal{A}\right)$, we have 
			\begin{equation*}
			\Re\left(\left<\mathcal{A}U,U\right>_{\mathcal{H}}\right)=- \int_{0}^{L} b(x)|v_{x}|^2dx=-\int _{\alpha _1}^{\alpha _3} b_{0} |v_x|^2dx\leq 0,
			\end{equation*}
			which implies that $\mathcal{A}$ is dissipative. Here $\Re$ is used to denote the real part of a complex number. Now, let $F=(f_1,f_2,f_3,f_4)$, we prove the existence of $U=(u,v,y,z)\in D(\mathcal{A})$, solution of the equation 
			\begin{equation}\label{eq-2.10}
			-\mathcal{A}U=F.
			\end{equation}
			Equivalently, one must consider the  system given by 
			\begin{eqnarray}
			-v&=&f_1,\label{eq-2.11}\\
			-(au_{x}+b(x)v_{x})_{x}+c(x)z&=&f_2,\label{eq-2.12}\\
			-z&=&f_3,\label{eq-2.13}\\
			-y_{xx}-c(x)v&=&f_4,\label{eq-2.14}
			\end{eqnarray}
			with the boundary conditions 
			\begin{equation}\label{eq-2.19}
			u(0)=u(L)=0,\quad\text{and}\quad y(0)=y(L)=0.
			\end{equation}
Let $\left(\varphi,\psi\right)\in H_0^1(0,L)\times H_0^1(0,L)$. Multiplying Equations \eqref{eq-2.12} and \eqref{eq-2.14}  by $\overline{\varphi}$ and $\overline{\psi}$ respectively, integrate over $(0,L)$,  we obtain 
			\begin{eqnarray}
				\int_0^L(au_x+b(x)v_{x})\overline{\varphi}_{x}dx+\int_{0}^{L}c(x)z\overline{\varphi}dx&=&\int_0^L f_2\overline{\varphi} dx,\label{Part1}\\
				\int_0^Ly_x\overline{\psi}_x dx-\int_0^L c(x)v\overline{\psi}dx&=&\int_0^L f_4\overline{\psi} dx\label{Part2}.
			\end{eqnarray}
Inserting Equations \eqref{eq-2.11} and  \eqref{eq-2.13} into \eqref{Part1} and \eqref{Part2}, we get 
	\begin{eqnarray}
				\int_0^Lau_x\overline{\varphi}_{x}dx&=&\int_0^L f_2\overline{\varphi} dx+\int_0^L b(x)(f_{1})_{x}\overline{\varphi}_{x}dx+\int_{0}^{L}c(x)f_{3}\overline{\varphi}dx,\label{Part-11}\\
				\int_0^Ly_x\overline{\psi}_x dx&=&\int_0^L f_4\overline{\psi} dx-\int_0^L c(x)f_{1}\overline{\psi}dx\label{Part-12}.
			\end{eqnarray}
Adding Equations \eqref{Part-11} and \eqref{Part-12}, we obtain
			\begin{equation}\label{Part-1-27}
			a\left((u,y),(\varphi,\psi)\right)=L\left(\varphi,\psi\right),\quad \forall \ (\varphi,\psi)\in H_0^1(0,L)\times H_0^1(0,L),
			\end{equation}
			where
			\begin{equation}\label{Part-1-28}
					a\left((u,y),(\varphi,\psi)\right)=a\int_0^L 	 u_x\overline{\varphi}_xdx+\int_0^L y_x\overline{\psi}_{x}dx		
			\end{equation}
			and 
			\begin{equation}\label{Part-1-29}
			L(\varphi,\psi)=\int_0^L f_2\overline{\varphi} dx+\int_0^L b(x)(f_{1})_{x}\overline{\varphi}_{x}dx+\int_{0}^{L}c(x)f_{3}\overline{\varphi}dx+\int_0^L f_4\overline{\psi} dx-\int_0^L c(x)f_{1}\overline{\psi}dx.
			\end{equation}
 Thanks to \eqref{Part-1-28}, \eqref{Part-1-29} , we have that $a$ is a bilinear continuous coercive form on $\left( H_0^1(0,L)\times H_0^1(0,L)\right)^2$,  and $L$ is a linear continuous form on $H_0^1(0,L)\times H_0^1(0,L)$. Then, using Lax-Milgram theorem, we deduce that there exists $(u,y)\in H_0^1(0,L)\times H_0^1(0,L)$ unique solution of the variational problem \eqref{Part-1-27}. Applying the classical elliptic regularity we deduce that $U=(u,v,y,z)\in D(\AA)$ is the unique solution of \eqref{eq-2.10}. The proof is thus complete.
\end{proof}$\\[0.1in]$
%%%%%%%%%%%%%%%%%%%%%%%%%%%%%%%%%%%%%%%%%%%%%%%%%%%%%%%%%%%%
\noindent From Proposition \ref{Theorem-2.2}, the operator $\AA$ is m-dissipative on $\HH$ and consequently, generates a $C_0-$semigroup of contractions $\left(e^{t\mathcal{A}}\right)_{t\geq0}$ following Lummer-Phillips theorem (see in \cite{LiuZheng01} and \cite{Pazy01}). Then the solution of the evolution Equation \eqref{eq-2.9}	admits the following representation
$$
U(t)=e^{t\AA}U_0,\quad t\geq 0,
$$
\noindent which leads to the well-posedness of \eqref{eq-2.9}. Hence, we have the following result.
%%%%%%%%%%%%%%%%%%%%%%%%%%%%%%%%%%%%%%%%%%%%%%%%%%%%%%%%%%%%
%Theorem
%%%%%%%%%%%%%%%%%%%%%%%%%%%%%%%%%%%%%%%%%%%%%%%%%%%%%%%%%%%%
		\begin{theoreme}\label{Theorem-2.3}
Let $U_0\in \mathcal{H}$ then, problem \eqref{eq-2.9} admits a unique weak solution $U$ satisfies 
		$$U(t)\in C^0\left(\R^+,\mathcal{H}\right).$$
Moreover, if $U_0\in D(\mathcal{A})$ then, problem \eqref{eq-2.9} admits a unique strong solution $U$ satisfies 
$$U(t)\in C^1\left(\R^+,\mathcal{H}\right)\cap C^0(\R^+,D(\mathcal{A})).$$
	\end{theoreme}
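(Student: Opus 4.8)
The plan is to read off both assertions directly from the general theory of $C_0$-semigroups, since the substantive analytic work has already been carried out in Proposition \ref{Theorem-2.2}. First I would recall that Proposition \ref{Theorem-2.2} establishes that $\mathcal{A}$ is m-dissipative on $\mathcal{H}$; by the Lumer--Phillips theorem this is precisely the condition needed for $\mathcal{A}$ to generate a $C_0$-semigroup of contractions $\left(e^{t\mathcal{A}}\right)_{t\geq 0}$ on $\mathcal{H}$, as already noted in the paragraph preceding the statement (see \cite{LiuZheng01,Pazy01}). All that remains is to translate the abstract regularity properties of this semigroup into the two claimed levels of regularity for solutions of \eqref{eq-2.9}.

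For the weak solution, I would set $U(t)=e^{t\mathcal{A}}U_0$ for an arbitrary initial datum $U_0\in\mathcal{H}$. The strong continuity of the semigroup, which is part of the definition of a $C_0$-semigroup, yields immediately that $t\mapsto U(t)$ is continuous from $\R^+$ into $\mathcal{H}$, that is $U\in C^0\left(\R^+,\mathcal{H}\right)$. Uniqueness in this class follows from the fact that the infinitesimal generator determines the semigroup uniquely, so two mild solutions sharing the same initial value must coincide.

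For the strong solution, I would invoke the standard differentiability result for semigroups: if $U_0\in D(\mathcal{A})$, then $U(t)=e^{t\mathcal{A}}U_0$ remains in $D(\mathcal{A})$ for every $t\geq 0$, the orbit $t\mapsto U(t)$ is continuously differentiable from $\R^+$ into $\mathcal{H}$ with $U'(t)=\mathcal{A}U(t)=e^{t\mathcal{A}}\mathcal{A}U_0$, and $t\mapsto \mathcal{A}U(t)$ is itself continuous into $\mathcal{H}$. The last property is equivalent to continuity of $U$ into $D(\mathcal{A})$ equipped with the graph norm $\|\cdot\|_{\mathcal{H}}+\|\mathcal{A}\,\cdot\|_{\mathcal{H}}$, since $U$ is already $\mathcal{H}$-continuous; hence $U\in C^1\left(\R^+,\mathcal{H}\right)\cap C^0\left(\R^+,D(\mathcal{A})\right)$, as claimed.

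There is no genuine obstacle at this stage: every step is a verbatim application of the Hille--Yosida / Lumer--Phillips machinery, and all the real content is contained in the m-dissipativity proof of Proposition \ref{Theorem-2.2}. The only point deserving a word of care is the identification of ``continuity into $D(\mathcal{A})$'' with continuity of $t\mapsto \mathcal{A}U(t)$ in $\mathcal{H}$, which is immediate from the definition of the graph norm together with the already-established $\mathcal{H}$-continuity of the orbit.
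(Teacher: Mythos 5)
Your proposal is correct and follows exactly the paper's route: Proposition \ref{Theorem-2.2} gives m-dissipativity, Lumer--Phillips yields the contraction semigroup, and the two regularity statements are the standard consequences for mild and classical solutions of $U_t=\mathcal{A}U$. The paper leaves these last steps implicit, so your version simply spells out what the paper takes for granted.
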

%%%%%%%%%%%%%%%%%%%%%%%%%%%%%%%%%%%%%%%%%%%%%%%%%%%%%%%%%%%%
%%%%%%%%%%%%%%%%%%%%%%%%%%%%%%%%%%%%%%%%%%%%%%%%%%%%%%%%%%%%
%Subsection
%%%%%%%%%%%%%%%%%%%%%%%%%%%%%%%%%%%%%%%%%%%%%%%%%%%%%%%%%%%%
%%%%%%%%%%%%%%%%%%%%%%%%%%%%%%%%%%%%%%%%%%%%%%%%%%%%%%%%%%%%
\subsection{Strong Stability}\label{Section-2.2}
%\noindent We introduce here the notions of stability that we encounter in this work.
%\noindent To obtain strong stability of the $C_0$-semigroup $\left(e^{t\mathcal{A}}\right)_{t\geq0}$ we mention the theorem of Arendt and Batty in \cite{Arendt01}. 
\noindent This part is devoted for the proof of the strong stability of the $C_0$-semigroup $\left(e^{t\mathcal{A}}\right)_{t\geq0}$.
\noindent To obtain strong stability of the $C_0$-semigroup $\left(e^{t\mathcal{A}}\right)_{t\geq0}$ we use the theorem of Arendt and Batty in \cite{Arendt01} (see Theorem \ref{arendtbatty} in Appendix).
 %%%%%%%%%%%%%%%%%%%%%%%%%%%%%%%%%%%%%%%%%%%%
       % Theorem 
 %%%%%%%%%%%%%%%%%%%%%%%%%%%%%%%%%%%%%%%%%%%%

\begin{theoreme}\label{strong stability}
The $C_0-$semigroup of contractions $\left(e^{t\mathcal{A}}\right)_{t\geq0}$ is strongly stable in $\HH$; i.e. for all $U_0\in \mathcal{H}$, the solution of \eqref{eq-2.9} satisfies 
$$ 
\lim_{t\to +\infty}\|e^{t\mathcal{A}}U_0\|_{\mathcal{H}}=0.
$$
\end{theoreme}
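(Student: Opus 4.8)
The plan is to apply the Arendt--Batty criterion (Theorem \ref{arendtbatty}): since $\left(e^{t\mathcal{A}}\right)_{t\geq0}$ is already a contraction semigroup by Proposition \ref{Theorem-2.2}, it suffices to show that the imaginary axis meets neither the point spectrum nor the residual/continuous spectrum of $\mathcal{A}$. Concretely, I would prove the stronger statement $i\R\subset\rho(\mathcal{A})$, which trivially delivers both hypotheses of Arendt--Batty (an empty, hence countable, intersection $\sigma(\mathcal{A})\cap i\R$ and no purely imaginary eigenvalues). The case $\lambda=0$ is free: Proposition \ref{Theorem-2.2} shows that $-\mathcal{A}U=F$ is uniquely solvable, so $0\in\rho(\mathcal{A})$. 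It remains to treat $\lambda\in\R^{*}$, where I would establish injectivity (no eigenvalue) and surjectivity of $i\lambda I-\mathcal{A}$ separately.

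For injectivity, suppose $U=(u,v,y,z)\in D(\mathcal{A})$ solves $\mathcal{A}U=i\lambda U$ with $\lambda\neq0$. Taking real parts in $\langle \mathcal{A}U,U\rangle_{\mathcal{H}}=i\lambda\|U\|_{\mathcal{H}}^{2}$ and invoking the dissipation identity of Proposition \ref{Theorem-2.2} forces $\int_{\alpha_1}^{\alpha_3}b_0|v_x|^2\,dx=0$, hence $v_x\equiv0$ and, since $v=i\lambda u$, also $u_x\equiv0$ on the damped interval $(\alpha_1,\alpha_3)$. On the subinterval $(\alpha_1,\alpha_2)$ the coupling vanishes, so the eigenvalue equation collapses to $-\lambda^2u=0$, giving $u\equiv0$ there and therefore on all of $(\alpha_1,\alpha_3)$; then on the overlap $(\alpha_2,\alpha_3)$ the term $c_0 z$ forces $z\equiv0$, hence $y\equiv0$. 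This is the decisive mechanism: the overlap $\alpha_2<\alpha_3$ of the supports of $b$ and $c$ is exactly what transfers the dissipation from the $u$-component to the $y$-component. From here I would propagate the vanishing to the whole domain: using that the flux $au_x+b(x)v_x$ lies in $H^1(0,L)$ (hence is continuous across the interfaces) together with the zero Cauchy data $u=u_x=y=y_x=0$ at $\alpha_2$ and $\alpha_3$, the coupled second-order system reduces, on each region where $b$ and $c$ are constant, to a linear ODE with vanishing Cauchy data, and ODE uniqueness yields $U\equiv0$.

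For surjectivity, given $F=(f_1,f_2,f_3,f_4)\in\mathcal{H}$ I would eliminate $v=i\lambda u-f_1$ and $z=i\lambda y-f_3$ and recast $(i\lambda I-\mathcal{A})U=F$ as a variational problem for $(u,y)\in H_0^1(0,L)^2$, exactly as in the proof of Proposition \ref{Theorem-2.2} but augmented by the terms $-\lambda^2\!\int u\bar\varphi$, $-\lambda^2\!\int y\bar\psi$ and $\pm i\lambda\!\int c(x)(\cdots)$. The principal part $\int_0^L(a+i\lambda b(x))u_x\bar\varphi_x\,dx+\int_0^L y_x\bar\psi_x\,dx$ remains continuous and coercive on $H_0^1(0,L)^2$ (its real part controls the norm), while all remaining, derivative-free terms define a \emph{compact} operator via the Rellich embedding $H_0^1\hookrightarrow L^2$. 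Writing the problem as $(A_{\mathrm{princ}}+K)(u,y)=L$ with $A_{\mathrm{princ}}$ boundedly invertible by Lax--Milgram and $A_{\mathrm{princ}}^{-1}K$ compact, the Fredholm alternative applies, so surjectivity follows from the injectivity established above; classical elliptic regularity then returns $U\in D(\mathcal{A})$.

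I should emphasize the subtle point flagged in the introduction: although the full resolvent $(i\lambda I-\mathcal{A})^{-1}$ is \emph{not} compact (the $u$-component gains no regularity beyond $H_0^1$ in the damped zone), the Fredholm argument above is unaffected, since it only uses compactness of the lower-order perturbation at the level of the variational formulation. Combining the two steps gives $i\R\subset\rho(\mathcal{A})$, and Arendt--Batty then yields the strong stability. The main obstacle is the injectivity step: because the supports of $b$ and $c$ overlap only partially and $u$ has low regularity across the transmission points $\alpha_1,\dots,\alpha_4$, the unique continuation must be carried out region by region with careful matching of $u,u_x,y,y_x$ through each interface.
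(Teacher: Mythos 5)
Your proposal is correct and follows the same overall strategy as the paper: Arendt--Batty reduced to showing $i\R\subset\rho(\mathcal{A})$, with injectivity obtained from the dissipation identity plus a region-by-region unique continuation (the paper's Lemma \ref{ker}) and surjectivity from a Lax--Milgram / compact-perturbation / Fredholm argument at the variational level (the paper's Lemma \ref{surjec}, which likewise splits the sesquilinear form into a coercive part $a_1$ and a compact part $a_2$ and closes the loop with the injectivity lemma). The one place where you genuinely diverge is the unique-continuation step on $(\alpha_3,\alpha_4)$, where $c=c_0$ but $b=0$: the paper writes down the general solution of the coupled constant-coefficient system \eqref{eq-2.27}--\eqref{eq-2.2.8}, distinguishes the three cases $\lambda^2<c_0^2$, $\lambda^2=c_0^2$, $\lambda^2>c_0^2$, and checks by hand that the determinants of the $4\times4$ matrices $M_1$, $M_2$, $M_3$ are nonzero; you instead observe that $(u,u_x,y,y_x)$ solves a linear first-order ODE system with vanishing Cauchy data at $\alpha_3$ (conditions \eqref{0uuxyyxa3}) and hence vanishes identically by ODE uniqueness. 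Your route is shorter and avoids the case analysis entirely; the paper's computation buys nothing extra, since the matrices $M_j$ are exactly the Wronskian-type matrices whose invertibility is guaranteed by that same uniqueness theorem. Two further minor differences: you obtain $y=0$ on $(\alpha_2,\alpha_3)$ directly from the first equation (once $u$, $v$ and the flux vanish there, the term $c_0z$ must vanish), whereas the paper passes through the intermediate identity $u=\frac{ic_0}{\lambda}y$; and your remark that the continuity of the flux $au_x+b(x)v_x\in H^1(0,L)$ is what legitimizes the matching across the interfaces is precisely the regularity the paper invokes implicitly when asserting $(u,y)\in C^1([0,L])$.
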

 
\noindent For the proof of Theorem \ref{strong stability}, according to Theorem \ref{arendtbatty}, we need to prove that the operator $\mathcal{A}$ has no pure imaginary eigenvalues and $\sigma\left(A\right)\cap i\mathbb{R}$ contains only a countable number of continuous spectrum of $\mathcal{A}$. The argument for Theorem \ref{strong stability} relies on the subsequent lemmas.

\begin{lemma}\label{ker}
For $\la\in \R$, we have $i\la I -\mathcal{A}$ is injective i.e. 
$$
\ker\left(i\la I-\mathcal{A}\right)=\{0\},\quad \forall \la\in \R.
$$
\end{lemma}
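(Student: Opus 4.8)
The plan is to split into the trivial frequency $\lambda=0$ and the generic case $\lambda\neq0$, and in the latter to first squeeze vanishing of the eigenvector on the overlap $(\alpha_2,\alpha_3)$ of the damping and coupling regions out of the dissipation identity, and then to propagate that vanishing to all of $(0,L)$ by a piecewise constant-coefficient ODE argument. Writing $(i\lambda I-\mathcal{A})U=0$ componentwise for $U=(u,v,y,z)\in D(\mathcal{A})$ gives $v=i\lambda u$, $z=i\lambda y$, together with $i\lambda v-(au_x+b(x)v_x)_x+c(x)z=0$ and $i\lambda z-y_{xx}-c(x)v=0$. For $\lambda=0$ the first and third equations force $v=z=0$, and then $-(au_x)_x=0$, $-y_{xx}=0$ with homogeneous Dirichlet data yield $u=y=0$. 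For $\lambda\neq0$ I would take the real part of $\langle(i\lambda I-\mathcal{A})U,U\rangle_{\mathcal{H}}=0$; since $\Re\langle i\lambda U,U\rangle_{\mathcal{H}}=0$ and, by the computation in Proposition \ref{Theorem-2.2}, $\Re\langle\mathcal{A}U,U\rangle_{\mathcal{H}}=-b_0\int_{\alpha_1}^{\alpha_3}|v_x|^2\,dx$, this forces $v_x\equiv0$ on $(\alpha_1,\alpha_3)$, so $v$ is constant there and, as $v=i\lambda u$, also $u_x\equiv0$ there.

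Next I would localize on $(\alpha_1,\alpha_3)$, where the flux $(au_x+b_0v_x)_x$ vanishes, so the second equation collapses to $i\lambda v+c(x)z=0$. On the subinterval $(\alpha_1,\alpha_2)$, where $c\equiv0$, this gives $v\equiv0$; since $v$ is constant throughout $(\alpha_1,\alpha_3)$ we conclude $v\equiv0$, hence $u\equiv0$, on all of $(\alpha_1,\alpha_3)$. On the overlap $(\alpha_2,\alpha_3)$, where $c\equiv c_0$, the same relation now forces $z\equiv0$ and therefore $y\equiv0$. Thus $u=v=y=z\equiv0$ on $(\alpha_2,\alpha_3)$.

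Finally, substituting $v=i\lambda u$ and $z=i\lambda y$ reduces the problem to the second-order system $-\lambda^2u-\big((a+i\lambda b(x))u_x\big)_x+i\lambda c(x)y=0$ and $-\lambda^2y-y_{xx}-i\lambda c(x)u=0$, which has piecewise constant coefficients on the five subintervals cut out by $0<\alpha_1<\alpha_2<\alpha_3<\alpha_4<L$. On each subinterval this is a constant-coefficient linear ODE system, so the Cauchy data $(u,u_x,y,y_x)$ at one endpoint determines the solution uniquely. Starting from the zero Cauchy data available at $\alpha_2$ and $\alpha_3$, I would march across the interfaces: the continuity of $y$ and $y_x$ (from $y\in H^2$) and of the combined flux $au_x+b(x)v_x$ (from $(au_x+b(x)v_x)_x\in L^2$, hence in $H^1$) transfers vanishing Cauchy data from one side of each $\alpha_i$ to the other, propagating $u\equiv y\equiv0$ first to $(\alpha_3,\alpha_4)$ and $(\alpha_4,L)$, and symmetrically to $(\alpha_1,\alpha_2)$ and $(0,\alpha_1)$. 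Hence $U\equiv0$, proving injectivity.

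The step I expect to be the main obstacle is the interface bookkeeping in this last propagation: one must use that it is $u,y,y_x$ and the \emph{combined} flux $au_x+b(x)v_x$ — not $u_x$ by itself — that are continuous across the jumps of $b$ and $c$, and on the coupled block $(\alpha_3,\alpha_4)$ (where $c\equiv c_0$) the $u$- and $y$-equations no longer decouple, so the vanishing there must be deduced from uniqueness for the coupled $4\times4$ first-order system rather than from a scalar Sturm--Liouville argument.
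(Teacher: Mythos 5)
Your proof is correct and follows the same overall strategy as the paper's: the dissipation identity kills $v_x$ on $(\alpha_1,\alpha_3)$, the structure of the equations then forces $U$ to vanish on the overlap $(\alpha_2,\alpha_3)$, and this vanishing is propagated to all of $(0,L)$ by uniqueness for the piecewise constant-coefficient ODE system, using continuity of $u$, $y$, $y_x$ and of the combined flux across the interfaces. The one place where you genuinely diverge is the interval $(\alpha_3,\alpha_4)$: the paper writes the general solution of the coupled equations explicitly via the fourth-order equation $au_{xxxx}+(a+1)\lambda^2u_{xx}+\lambda^2(\lambda^2-c_0^2)u=0$, splits into the three cases $\lambda^2<c_0^2$, $\lambda^2=c_0^2$, $\lambda^2>c_0^2$ according to the sign of the second root of the characteristic polynomial, and checks that a $4\times4$ determinant is nonzero in each case, whereas you simply invoke Cauchy--Lipschitz uniqueness for the associated first-order $4\times4$ constant-coefficient system with zero Cauchy data at $\alpha_3$. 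Your version is shorter and avoids the case analysis entirely; the needed interior $H^2$-regularity of $u$ on $(\alpha_3,\alpha_4)$ indeed follows from the flux being in $H^1$ there, and nothing in the paper's explicit computation is used beyond triviality of the solution. Two further minor differences: you treat $\lambda=0$ by hand instead of citing $0\in\rho(\mathcal{A})$ from Proposition \ref{Theorem-2.2}, and you obtain $z=0$ on $(\alpha_2,\alpha_3)$ directly from the collapsed relation $i\lambda v+c(x)z=0$ rather than via the paper's intermediate identity $u=\frac{ic_0}{\lambda}y$; both are equivalent. Your explicit remark that it is the flux $au_x+b(x)v_x$, and not $u_x$ itself, that is continuous across $\alpha_1$ and $\alpha_3$ is a point the paper glosses over when it asserts $(u,y)\in C^1([0,L])$ (the assertion happens to be true here only because the flux vanishes at those interfaces).
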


\begin{proof}
From Proposition \ref{Theorem-2.2}, we have $0\in \rho(\mathcal{A})$. We still need to show the result for $\la\in \R^{\ast}$. Suppose that there exists a real number $\la\neq 0$ and  $U=\left(u,v,y,z\right)\in D(\AA)$, such that
\begin{equation*}
\AA U=i\la U.
\end{equation*}
Equivalently, we have 
\begin{eqnarray}
v&=&i\la u,\label{eq-2.20}\\
(au_{x}+b(x)v_{x})_{x}-c(x)z&=&i\la v,\label{eq-2.21}\\
z&=&i\la y,\label{eq-2.22}\\
y_{xx}+c(x)v&=&i\la z.\label{eq-2.23}
\end{eqnarray}
Next, a straightforward computation gives 
\begin{equation*}
0=\Re\left<i\la U,U\right>_{\HH}=\Re\left<\AA U,U\right>_{\HH}=-\int_0^L b(x)|v_x|^2dx=-\int_{\alpha_{1}}^{\alpha_{3}}b_{0}|v_{x}|^2dx,
\end{equation*}
consequently, we deduce that  
\begin{equation}\label{eq-2.25}
b(x)v_x=0\quad \text{in}\quad (0,L)\quad \text{and}\quad v_{x}=0 \quad \text{in} \quad (\alpha_{1},\alpha_{3}).
\end{equation}
It follows, from Equation \eqref{eq-2.20}, that 
\begin{equation}\label{eq-2.26}
u_x=0\quad \text{in}\quad (\alpha_{1},\alpha_{3}).
\end{equation}
Using Equations \eqref{eq-2.21}, \eqref{eq-2.22}, \eqref{eq-2.25}, \eqref{eq-2.26} and the definition of $c(x)$, we obtain 
\begin{equation}\label{neweq-2.29}
y_x=0\quad \text{in}\quad (\alpha_2,\alpha_3).
\end{equation}
Substituting Equations \eqref{eq-2.20}, \eqref{eq-2.22} in Equations \eqref{eq-2.21}, \eqref{eq-2.23}, and using Equation \eqref{eq-2.25} and the definition of $b(x)$ in \eqref{bc},  we get
\begin{eqnarray}
\la^2u+au_{xx}-i\la c(x)y&=&0,\hspace{1cm}\text{in}\quad (0,L)\label{eq-2.27}\\
\la^2y+y_{xx}+i\la c(x)u&=&0,\hspace{1cm}\text{in}\quad (0,L)\label{eq-2.2.8}
\end{eqnarray}
with the boundary conditions 
\begin{equation}\label{boundaryconditionker}
u(0)=u(L)=y(0)=y(L)=0.
\end{equation}
Our goal is to prove that $u=y=0$ on $(0,L)$. For simplicity, we divide the proof into three steps.\\
\textbf{Step 1.} The aim of this step is to show that $u=y=0$ on $(0,\alpha_3)$. so, using Equation \eqref{eq-2.26}, we have
\begin{equation*}
u_x=0\quad \text{in}\quad (\alpha_1,\alpha_2).
\end{equation*}
Using the above equation and Equation \eqref{eq-2.27} and the fact that $c(x)=0$ on $(\alpha_1,\alpha_2)$, we obtain 
\begin{equation}\label{0ua1a2}
u=0\quad \text{in}\quad (\alpha_1,\alpha_2).
\end{equation}
In fact, system \eqref{eq-2.27}-\eqref{boundaryconditionker} admits a unique solution $(u,y)\in C^1\left([0,L]\right)$, then 
\begin{equation}\label{0uxa1a2}
u(\alpha_1)=u_x(\alpha_1)=0.
\end{equation}
Then, from Equations \eqref{eq-2.27} and \eqref{0uxa1a2} and the fact that $c(x)=0$ on $(0,\alpha_1)$, we get 
\begin{equation}\label{0u0a1}
u=0\quad \text{in}\quad (0,\alpha_1).
\end{equation}
Using Equations \eqref{eq-2.26} and \eqref{0ua1a2} and the fact that $u\in C^1([0,L])$, we get
\begin{equation}\label{0ua2a3}
u=0\quad \text{in}\quad (\alpha_1,\alpha_3).
\end{equation}
Now, using Equations \eqref{eq-2.26}, \eqref{neweq-2.29} and the fact that $c(x)=c_0$ on $(\alpha_2,\alpha_3)$ in Equations \eqref{eq-2.27}, \eqref{eq-2.2.8} , we obtain 
\begin{equation}\label{uiy}
u=\dfrac{i c_0}{\la}y\quad \text{in}\quad (\alpha_2,\alpha_3).
\end{equation}
Using Equation \eqref{0ua2a3} in Equation \eqref{uiy}, we obtain
%Since $(u,y)\in C^1([0,L])$ and the fact that $u=0$ on $(\alpha_1,\alpha_2)$, then using Equations \eqref{eq-2.26} and \eqref{uiy}, we obtain 
\begin{equation}\label{0uya2a3}
u=y=0\quad \text{in}\quad (\alpha_2,\alpha_3).
\end{equation}
Since $y\in C^1([0,L])$, then 
\begin{equation}\label{0yyxa2}
y(\alpha_2)=y_x(\alpha_2)=0.
\end{equation}
So, from Equations \eqref{eq-2.2.8} and \eqref{0yyxa2} and the fact that $c(x)=0$ on $(\alpha_1,\alpha_2)$, we obtain 
\begin{equation}\label{0ya1a2}
y=0\quad \text{in}\quad (\alpha_1,\alpha_2).
\end{equation}
Using the same argument over $(0,\alpha_1)$, we get 
\begin{equation}\label{0y0a1}
y=0\quad \text{in}\quad (0,\alpha_1).
\end{equation}
Hence,  from Equations \eqref{0ua1a2}, \eqref{0u0a1}, \eqref{0ua2a3}, \eqref{0uya2a3}, \eqref{0ya1a2} and \eqref{0y0a1}, we obtain $u=y=0$ on $(0,\alpha_3)$. Consequently, we obtain 
$$
U=0\quad \text{in}\quad (0,\alpha_3).
$$
\textbf{Step 2.} The aim of this step is to show that $u=y=0$ on $(\alpha_3,\alpha_4)$. Using Equation \eqref{0uya2a3}, and the fact that $(u,y)\in C^1([0,L])$, we obtain the boundary conditions 
\begin{equation}\label{0uuxyyxa3}
u(\alpha_3)=u_x(\alpha_3)=y(\alpha_3)=y_x(\alpha_3)=0.
\end{equation}
Combining  Equations \eqref{eq-2.27}, \eqref{eq-2.2.8}, and the fact that $c(x)=c_0$ on $(\alpha_3,\alpha_4)$, we get 
\begin{equation}\label{syst1a3a4}
au_{xxxx}+(a+1)\la^2 u_{xx}+\la^2\left(\la^2-c_0^2\right)u=0.
\end{equation}
The characteristic equation of system \eqref{syst1a3a4} is 
$$
P(r):= ar^4+(a+1)\la^2 r^2+\la^2\left(\la^2-c_0^2\right).
$$ 
Setting 
$$
P_0(m):=am^2+(a+1)\la^2m+\la^2\left(\la^2-c_0^2\right).
$$
The polynomial $P_0$ has two distinct real roots $m_1$ and $m_2$ given by:
$$
m_1=\frac{-\la^2(a+1)-\sqrt{\la^4(a-1)^2+4ac_0^2\la^2}}{2a}\quad \text{and}\quad m_2=\frac{-\la^2(a+1)+\sqrt{\la^4(a-1)^2+4ac_0^2\la^2}}{2a}.
$$
It is clear that $m_1<0$ and the sign of $m_2$ depends on the value of $\la$ with respect to $c_0$. We distinguish the following three cases: $\la^2<c_0^2$, $\la^2=c_0^2$ and $\la^2>c_0^2$.\\
\textbf{Case 1.} If $\la^2<c_0^2$, then $m_2>0$. Setting 
$$
r_1=\sqrt{-m_1}\quad \text{and}\quad r_2=\sqrt{m_2}.
$$
Then $P$ has four simple roots $ir_1$, $-ir_1$, $r_2$ and $-r_2$, and hence the general solution of system \eqref{eq-2.27}, \eqref{eq-2.2.8}, is given by 
$$
\left\{\begin{array}{lll}
u(x)&=&\displaystyle{c_1\sin(r_1x)+c_2\cos(r_1x)+c_3\cosh(r_2x)+c_4\sinh(r_2x)},\\[0.1in]
y(x)&=&\displaystyle{\frac{(\la^2-ar_1^2)}{i\la c_0}\left(c_1\sin(r_1x)+c_2\cos(r_1x)\right)+\frac{(\la^2+ar_2^2)}{i\la c_0}\left(c_3\cosh(r_2x)+c_4\sinh(r_2x)\right)},
\end{array}
\right.
$$ 
where $c_j\in \mathbb{C}$, $j=1,\cdots,4$. In this case, the boundary condition in Equation \eqref{0uuxyyxa3}, can be expressed by 
\begin{equation*}
M_1\begin{pmatrix}
c_1\\ c_2\\ c_3\\ c_4
\end{pmatrix}=0,
\end{equation*}
where 
$$
M_1=\begin{pmatrix}
\sin(r_1\alpha_3)&\cos(r_1\alpha_3)&\cosh(r_2\alpha_3)&\sinh(r_2\alpha_3)\\[0.1in]
r_1\cos(r_1\alpha_3)&-r_1\sin(r_1\alpha_3)&r_2\sinh(r_2\alpha_3)&r_2\cosh(r_2\alpha_3)\\[0.1in]
\displaystyle{\frac{(\la^2-ar_1^2)}{i\la c_0}\sin(r_1\alpha_3)}&\displaystyle{\frac{(\la^2-ar_1^2)}{i\la c_0}\cos(r_1\alpha_3)}&\displaystyle{\frac{(\la^2+ar_2^2)}{i\la c_0}\cosh(r_2\alpha_3)}&\displaystyle{\frac{(\la^2+ar_2^2)}{i\la c_0}\sinh(r_2\alpha_3)}\\[0.1in]
\displaystyle{\frac{(\la^2-ar_1^2)}{i\la c_0}r_1\cos(r_1\alpha_3)}&\displaystyle{-\frac{(\la^2-ar_1^2)}{i\la c_0}r_1\sin(r_1\alpha_3)}&\displaystyle{\frac{(\la^2+ar_2^2)}{i\la c_0}r_2\sinh(r_2\alpha_3)}&\displaystyle{\frac{(\la^2+ar_2^2)}{i\la c_0}r_2\cosh(r_2\alpha_3)}
\end{pmatrix}.
$$
The determinant of $M_1$ is given by
$$
\det(M_1)=\frac{r_1r_2a^2\left(r_1^2+r_2^2\right)^2}{\la^2c_0^2}.
$$ 
System \eqref{eq-2.27}, \eqref{eq-2.2.8} with the boundary conditions \eqref{0uuxyyxa3}, admits only a trivial solution $u=y=0$ if and only if $\det(M_1)\neq 0$, i.e. $M_1$ is invertible. Since, $r_1^2+r_2^2=m_2-m_1\neq 0$, then $\det(M_1)\neq 0$. Consequently, if $\la^2<c_0^2$,
we obtain $u=y=0$ on $(\alpha_3,\alpha_4)$.\\
\textbf{Case 2.} If $\la^2=c_0^2$, then $m_2=0$. Setting 
$$
r_1=\sqrt{-m_1}=\sqrt{\frac{(a+1)c_0^2}{a}}.
$$
Then $P$ has two simple roots $ir_1$, $-ir_1$ and $0$ is a double root. Hence the general solution of System \eqref{eq-2.27}, \eqref{eq-2.2.8} is given by 
$$
\left\{\begin{array}{lll}
u(x)&=&c_1\sin(r_1 x)+c_2\cos(r_1x)+c_3x+c_4,\\[0.1in]
y(x)&=&\displaystyle{\frac{(\la^2-ar_1^2)}{i\la c_0}\left(c_1\sin(r_1x)+c_2\cos(r_1x)\right)+\frac{\la}{ic_0}\left(c_3x+c_4\right)},
\end{array}
\right.
$$
where $c_j\in \mathbb{C}$, for $j=1,\cdots,4$. Also, the boundary condition in Equation \eqref{0uuxyyxa3}, can be expressed by 
\begin{equation*}
M_2\begin{pmatrix}
c_1\\ c_2\\ c_3\\ c_4
\end{pmatrix}=0,
\end{equation*}
where 
$$
M_2=\begin{pmatrix}
\sin(r_1\alpha_3)&\cos(r_1\alpha_3)&\alpha_3 &1\\[0.1in]
r_1\cos(r_1\alpha_3)&-r_1\sin(r_1\alpha_3)&1&0\\[0.1in]
\displaystyle{\frac{(\la^2-ar_1^2)}{i\la c_0}\sin(r_1\alpha_3)}&\displaystyle{\frac{(\la^2-ar_1^2)}{i\la c_0}\cos(r_1\alpha_3)}&\displaystyle{\frac{\la\alpha_3}{ic_0}}&\displaystyle{\frac{\la}{ic_0}}\\
\displaystyle{\frac{(\la^2-ar_1^2)}{i\la c_0}r_1\cos(r_1\alpha_3)}&\displaystyle{-\frac{(\la^2-ar_1^2)}{i\la c_0}r_1\sin(r_1\alpha_3)}&\displaystyle{\frac{\la}{ic_0}}&0
\end{pmatrix}.
$$
The determinant of $M_2$ is given by 
$$
\det(M_2)=\frac{-a^2r_1^5}{\la^2c_0^2}.
$$
Since $r_1=\sqrt{-m_1}\neq 0$, then $\det(M_2)\neq 0$. Thus, System \eqref{eq-2.27}, \eqref{eq-2.2.8} with the boundary conditions \eqref{0uuxyyxa3}, admits only a trivial solution $u=y=0$ on $(\alpha_3,\alpha_4)$.\\
\textbf{Case 3.} If $\la^2>c_0^2$, then $m_2<0$. Setting 
$$
r_1=\sqrt{-m_1}\quad \text{and}\quad r_2=\sqrt{-m_2}.
$$ 
Then $P$ has four simple roots $ir_1$, $-ir_1$, $ir_2$ and $-ir_2$, and hence the general solution of System \eqref{eq-2.27}, \eqref{eq-2.2.8} is given by 
$$
\left\{\begin{array}{lll}
u(x)&=&\displaystyle{c_1\sin(r_1x)+c_2\cos(r_1x)+c_3\sin(r_2x)+c_4\cos(r_2x)},\\[0.1in]
y(x)&=&\displaystyle{\frac{(\la^2-ar_1^2)}{i\la c_0}\left(c_1\sin(r_1x)+c_2\cos(r_1x)\right)+\frac{(\la^2-ar_2^2)}{i\la c_0}\left(c_3\sin(r_2x)+c_4\cos(r_2x)\right)},
\end{array}
\right.
$$ 
where $c_j\in \mathbb{C}$, for $j=1,\cdots,4$. Also, the boundary condition in Equation \eqref{0uuxyyxa3}, can be expressed by 
\begin{equation*}
M_3\begin{pmatrix}
c_1\\ c_2\\ c_3\\ c_4
\end{pmatrix}=0,
\end{equation*}
where
$$
M_3=\begin{pmatrix}
\sin(r_1\alpha_3)&\cos(r_1\alpha_3)&\sin(r_2\alpha_3)&\cos(r_2\alpha_3)\\[0.1in]
r_1\cos(r_1\alpha_3)&-r_1\sin(r_1\alpha_3)&r_2\cos(r_2\alpha_3)&-r_2\sin(r_2\alpha_3)\\[0.1in]
\displaystyle{\frac{(\la^2-ar_1^2)}{i\la c_0}\sin(r_1\alpha_3)}&\displaystyle{\frac{(\la^2-ar_1^2)}{i\la c_0}\cos(r_1\alpha_3)}&\displaystyle{\frac{(\la^2-ar_2^2)}{i\la c_0}\sin(r_2\alpha_3)}&\displaystyle{\frac{(\la^2+ar_2^2)}{i\la c_0}\cos(r_2\alpha_3)}\\[0.1in]
\displaystyle{\frac{(\la^2-ar_1^2)}{i\la c_0}r_1\cos(r_1\alpha_3)}&\displaystyle{-\frac{(\la^2-ar_1^2)}{i\la c_0}r_1\sin(r_1\alpha_3)}&\displaystyle{\frac{(\la^2-ar_2^2)}{i\la c_0}r_2\cos(r_2\alpha_3)}&\displaystyle{-\frac{(\la^2-ar_2^2)}{i\la c_0}r_2\sin(r_2\alpha_3)}
\end{pmatrix}.
$$ 
The determinant of $M_3$ is given by 
$$
\det(M_3)=-\frac{r_1r_2a^2(r_1^2-r_2^2)^2}{\la c_0^2}.
$$
Since $r_1^2-r_2^2=m_2-m_1\neq 0$, then $\det(M_3)\neq 0$. Thus, System \eqref{eq-2.27}-\eqref{eq-2.2.8} with the boundary condition \eqref{0uuxyyxa3}, admits only a trivial solution $u=y=0$ on $(\alpha_3,\alpha_4)$. Consequently, we obtain $U=0$ on $(\alpha_3,\alpha_4)$.\\
\textbf{Step 3.} The aim of this step is to show that $u=y=0$ on $(\alpha_4,L)$. From Equations \eqref{eq-2.27}, \eqref{eq-2.2.8} and the fact that $c(x)=0$ on $(\alpha_4,L)$, we obtain the following system 
\begin{equation}\label{finalsystema4L}
\left\{\begin{array}{lll}
\la^2u+au_{xx}&=&0\quad \text{over}\quad (\alpha_4,L)\\
\la^2y+y_{xx}&=&0\quad \text{over}\quad (\alpha_4,L).
\end{array}
\right.
\end{equation}
Since $(u,y)\in C^1([0,L])$ and the fact that $u=y=0$ on $(\alpha_3,\alpha_4)$, we get 
\begin{equation}\label{uuxyyxa4L}
u(\alpha_4)=u_x(\alpha_4)=y(\alpha_4)=y_x(\alpha_4)=0.
\end{equation}
Finally, it is easy to see that System \eqref{finalsystema4L} admits only a trivial solution on $(\alpha_4,L)$ under the boundary condition \eqref{uuxyyxa4L}.\\
Consequently, we proved that $U=0$ on $(0,L)$. The proof is thus complete.
\end{proof}
%%%%%%%%%%%%%%
%%%%%%%%%%%%%%
\begin{lemma}\label{surjec}
For all $\la\in\R$, we have
\begin{equation*}
R(i\la I-\AA)=\HH.
\end{equation*}
\end{lemma}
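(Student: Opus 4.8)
The plan is to solve, for a given $F=(f_1,f_2,f_3,f_4)\in\HH$ and a fixed $\la\in\R$, the equation $(i\la I-\AA)U=F$ with $U=(u,v,y,z)\in D(\AA)$. The case $\la=0$ is already covered by Proposition \ref{Theorem-2.2}, since there $0\in\rho(\AA)$; hence I only treat $\la\neq0$. First I would write the resolvent equation componentwise. The first and third lines give $v=i\la u-f_1$ and $z=i\la y-f_3$, which I substitute into the second and fourth lines. This eliminates $v$ and $z$ and leaves a coupled second-order system for the pair $(u,y)\in H_0^1(0,L)\times H_0^1(0,L)$ of the schematic form
\begin{equation*}
-\bigl((a+i\la b(x))u_x\bigr)_x-\la^2 u+i\la c(x)y=g_1,\qquad -y_{xx}-\la^2 y-i\la c(x)u=g_2,
\end{equation*}
where $g_1=f_2+i\la f_1+c(x)f_3-(b(x)(f_1)_x)_x$ (the last term understood in $H^{-1}(0,L)$) and $g_2=f_4+i\la f_3-c(x)f_1$. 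A useful preliminary observation is that, once $u\in H_0^1$ and $y\in L^2$ are produced, the flux identity $au_x+b(x)v_x=(a+i\la b(x))u_x-b(x)(f_1)_x$ shows $(au_x+b(x)v_x)_x\in L^2(0,L)$ automatically, which is exactly the regularity demanded by $D(\AA)$.

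Next I would put this system into weak form on $\mathcal{V}:=H_0^1(0,L)\times H_0^1(0,L)$, producing a sesquilinear form $\mathcal{B}\bigl((u,y),(\varphi,\psi)\bigr)$ that I split as $\mathcal{B}=\mathcal{B}_0+\mathcal{B}_1$. Here $\mathcal{B}_0$ collects the principal part $\int_0^L(a+i\la b(x))u_x\overline{\varphi}_x\,dx+\int_0^L y_x\overline{\psi}_x\,dx$, while $\mathcal{B}_1$ collects the zeroth-order and coupling terms $-\la^2\int u\overline\varphi$, $i\la\int c\,y\overline\varphi$, $-\la^2\int y\overline\psi$, $-i\la\int c\,u\overline\psi$. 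Since $b$ is real and bounded, $\Re\,\mathcal{B}_0\bigl((u,y),(u,y)\bigr)=a\int_0^L|u_x|^2\,dx+\int_0^L|y_x|^2\,dx$, so $\mathcal{B}_0$ is continuous and coercive on $\mathcal{V}$; by Lax--Milgram it induces an isomorphism $T_0:\mathcal{V}\to\mathcal{V}'$. Each term of $\mathcal{B}_1$ factors through the compact embedding $H_0^1(0,L)\hookrightarrow L^2(0,L)$, so the associated operator $K:\mathcal{V}\to\mathcal{V}'$ is compact. Consequently $T:=T_0+K$ is a Fredholm operator of index zero, and it is surjective as soon as it is injective.

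Injectivity is where Lemma \ref{ker} enters. If $(u,y)\in\mathcal{V}$ solves the homogeneous reduced system ($g_1=g_2=0$), then elliptic regularity applied to the principal part gives $y\in H^2(0,L)\cap H_0^1(0,L)$ and $(a+i\la b(x))u_x\in H^1(0,L)$. Setting $v=i\la u$ and $z=i\la y$ then produces $U=(u,v,y,z)\in D(\AA)$ with $(i\la I-\AA)U=0$, whence $U=0$ by Lemma \ref{ker}, so $u=y=0$. Thus $T$ is injective, hence onto, and the inhomogeneous reduced system is solvable for every $F$. Finally I would recover $v=i\la u-f_1\in H_0^1(0,L)$ and $z=i\la y-f_3\in H_0^1(0,L)$, verify via the flux identity that $(au_x+b(x)v_x)_x\in L^2(0,L)$, and conclude that $U\in D(\AA)$ satisfies $(i\la I-\AA)U=F$, i.e. $R(i\la I-\AA)=\HH$.

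The step I expect to be the main obstacle is keeping every regularity argument inside the weak/$D(\AA)$ framework in the presence of the non-smooth coefficient $b$. Because $b$ jumps at $\alpha_1$ and $\alpha_3$ one cannot assert $u\in H^2(0,L)$; the correct substitute is the $H^1$-regularity of the flux $au_x+b(x)v_x$ across these interfaces, which must be tracked carefully so that the solution is genuinely an element of $D(\AA)$ rather than merely a weak solution. The accompanying points to watch are the loss of coercivity caused by the term $-\la^2$ (repaired by the Fredholm alternative) and the verification that the coupling terms are truly compact, which relies only on the compactness of $H_0^1(0,L)\hookrightarrow L^2(0,L)$ and the boundedness of $c$.
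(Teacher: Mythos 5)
Your proposal is correct and follows essentially the same route as the paper: eliminate $v,z$, pass to the variational formulation on $H_0^1(0,L)\times H_0^1(0,L)$, split the form into a coercive principal part (handled by Lax--Milgram) plus a compact perturbation, invoke the Fredholm alternative, and reduce injectivity of the form to Lemma \ref{ker}. Your extra care with the flux identity $(au_x+b(x)v_x)_x\in L^2(0,L)$ is a welcome explicit version of what the paper compresses into ``classical regularity arguments,'' but it does not constitute a different method.
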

\begin{proof}
From Proposition \ref{Theorem-2.2}, we have $0\in \rho(\mathcal{A})$. We still need to show the result for $\la\in \R^{\ast}$. Set
$F = (f_1, f_2, f_3, f_4)\in\HH $, we look for $U = (u, v, y, z)\in D(\AA)$ solution of
\begin{equation}\label{eq-2.30}
(i\la I-\AA)U=F.
\end{equation}
Equivalently, we have 
\begin{eqnarray}
v&=&i\la u-f_1,\label{1eq-2.47}\\
i\la v-(au_{x}+b(x)v_{x})_{x}+c(x)z&=&f_2,\label{2eq-2.47}\\
z&=&i\la y-f_3,\label{3eq-2.47}\\
i\la z-y_{xx}-c(x)v&=&f_4.\label{4eq-2.47}
\end{eqnarray}
\begin{comment}
\noindent We distinguish two cases:\\
\textbf{Case\ 1.} If $\lambda=0$. By eliminating $v$ and $z$ in Equations \eqref{2eq-2.47} and \eqref{4eq-2.47} by $-f_1$ and $-f_3$, we obtain
\begin{eqnarray}
-\left(au_x-b(x)\left(f_1\right)_x\right)_x&=&f_2+c(x)f_3,\label{1eq-2.470}\\
-y_{xx}&=&f_4-c(x)f_1,\label{2eq-2.470}
\end{eqnarray}
with fully Dirichlet boundary conditions. Let $\left(\varphi,\psi\right)\in H_{0}^1(0,L)\times H_0^1(0,L)$,  multiplying Equations \eqref{1eq-2.470} and \eqref{2eq-2.470} by $\bar{\varphi}$ and $\bar{\psi}$, integrate over $(0,L)$ respectively and taking the sum, we get 
\begin{equation}\label{eq1case1}
\int_0^Lau_x\bar{\varphi}_xdx+\int_0^Ly_x\psi_xdx=\int_0^L\left(f_2+c(x)f_3\right)\bar{\varphi}dx+\int_0^Lb(x)\left(f_1\right)_x\bar{\varphi}_xdx+\int_0^L\left(f_4-c(x)f_1\right)\bar{\psi}dx
\end{equation}
The left hand side of \eqref{eq1case1} is a bilinear continuous coercive form on $\left(H_0^1(0,L)\times H_0^1(0,L)\right)^2$, and the right hand side of equation \eqref{eq1case1} is a linear continuous form on $H_0^1(0,L)\times H_0^1(0,L)$. Using Lax-Milgram theorem, we deduce that there exists a unique solution $(u,y)\in H_0^1(0,L)\times H_0^1(0,L)$. Applying the classical elliptic regularity we deduce that $U=(u,v,y,z)\in D(\AA)$ is the unique solution of \eqref{eq-2.30}.
\noindent \textbf{Case\ 2.} If $\lambda\in \R^{\ast}$;\\
\end{comment}
Let $\left(\varphi,\psi\right)\in H_{0}^1(0,L)\times H_0^1(0,L)$, multiplying Equations \eqref{2eq-2.47} and \eqref{4eq-2.47} by $\bar{\varphi}$ and $\bar{\psi}$ respectively and integrate over $(0,L)$, we obtain 
\begin{eqnarray}
\int_0^Li\la v\bar{\varphi}dx+\int_0^Lau_x\bar{\varphi}_xdx+\int_0^Lb(x)v_x\bar{\varphi}_xdx+\int_0^Lc(x)z\bar{\varphi}dx=\int_0^Lf_2\bar{\varphi}dx,\label{5eq-2.47}\\
\int_0^Li\la z\bar{\psi}dx+\int_0^Ly_x\bar{\psi}_xdx-\int_0^Lc(x)v\bar{\psi}dx=\int_0^Lf_4\bar{\psi}dx.\label{6eq-2.47}
\end{eqnarray}
Substituting $v$ and $z$ by $i\la u-f_1$ and $i\la y-f_3$ respectively in Equations \eqref{5eq-2.47}-\eqref{6eq-2.47} and taking the sum, we obtain 
\begin{equation}\label{aL}
a\left((u,y),(\varphi,\psi)\right)={\rm L}(\varphi,\psi),\qquad \forall (\varphi,\psi)\in H_0^1(0,L)\times H_0^1(0,L),
\end{equation}
where 
\begin{equation*}
a\left((u,y),(\varphi,\psi)\right)=a_1\left((u,y),(\varphi,\psi)\right)+a_2\left((u,y),(\varphi,\psi)\right)
\end{equation*}
with 
\begin{equation*}
\left\{\begin{array}{l}
a_1\left((u,y),(\varphi,\psi)\right)=\displaystyle{\int_0^L\left( au_{x}\bar{\varphi}_{x}+ y_{x}\bar{\psi}_{x}\right)dx+i\la \int_0^Lb(x)u_{x}\bar{\varphi}_{x}dx},\\[0.1in]
a_2\left((u,y),(\varphi,\psi)\right)=\displaystyle{-\la^2\int_0^L\left(u\bar{\varphi}+y\bar{\psi}\right)dx+i\la\,\int_0^L c(x)\left(y\bar{\varphi}-u\bar{\psi}\right)dx},
\end{array}
\right.
\end{equation*}
and 
\begin{equation*}
\begin{array}{l}
{\rm L}(\varphi,\psi)=\displaystyle{\int_0^L\left(f_{2}+c(x)f_{3}+i\la f_{1}\right)\bar{\varphi}dx+\int_0^L\left(f_{4}-c(x)f_{1}+i\la f_{3}\right)\bar{\psi} dx+\int_0^Lb(x)\left(f_1\right)_x\bar{\varphi}_xdx.}
\end{array}
\end{equation*}
Let $V=H_0^1(0,L)\times H_0^1(0,L)$ and $V'=H^{-1}(0,L)\times H^{-1}(0,L)$ the dual space of $V$. Let us consider the following operators,
$$
\left\{\begin{array}{llll}
{\rm A}:&V&\rightarrow& V'\\
&(u,y)&\rightarrow &{\rm A}(u,y)
\end{array}
\right.
\left\{\begin{array}{llll}
{\rm A_1}:&V&\rightarrow& V'\\
&(u,y)&\rightarrow &{\rm A_1}(u,y)
\end{array}
\right.
\left\{\begin{array}{llll}
{\rm A_2}:&V&\rightarrow& V'\\
&(u,y)&\rightarrow &{\rm A_2}(u,y)
\end{array}
\right.
$$
such that
\begin{equation}\label{AA1A2aa1a2}
\left\{\begin{array}{ll}
\displaystyle{\left({\rm A}(u,y)\right)(\varphi,\psi)=a\left((u,y),(\varphi,\psi)\right)},&\forall (\varphi,\psi)\in H_0^1(0,L)\times H_0^1(0,L),\\[0.1in]
\displaystyle{\left({\rm A_1}(u,y)\right)(\varphi,\psi)=a_1\left((u,y),(\varphi,\psi)\right)},&\forall (\varphi,\psi)\in H_0^1(0,L)\times H_0^1(0,L),\\[0.1in]
\displaystyle{\left({\rm A_2}(u,y)\right)(\varphi,\psi)=a_2\left((u,y),(\varphi,\psi)\right)},&\forall (\varphi,\psi)\in H_0^1(0,L)\times H_0^1(0,L).
\end{array}
\right.
\end{equation}
Our goal is to prove that ${\rm A}$ is an isomorphism operator. For this aim, we divide the proof into three steps.\\
{\textbf{Step 1.}} In this step, we prove that the operator ${\rm A_1}$ is an isomorphism operator. For this goal, following the second equation  of \eqref{AA1A2aa1a2} we can easily verify that $a_1$ is a bilinear continuous coercive form on $H_0^1(0,L)\times H_0^1(0,L)$. Then, by Lax-Milgram Lemma, the operator ${\rm A_1}$ is an isomorphism.\\

\noindent {\textbf{Step 2.}} In this step, we prove that the operator ${\rm A_2}$ is compact. According to the third equation of \eqref{AA1A2aa1a2}, we have 
$$
\abs{a_2\left((u,y),(\varphi,\psi)\right)}\leq C\|(u,y)\|_{L^2(0,L)}\|(\varphi,\psi)\|_{L^2(0,L)}.
$$ 
Finally, using the compactness embedding from $H_0^1(0,L)$ to $L^2(0,L)$ and the continuous embedding from $L^2(0,L)$ into $H^{-1}(0,L)$ we deduce that $A_2$ is compact.\\

\noindent From steps 1 and 2, we get that the operator ${\rm A=A_1+A_2}$ is a Fredholm operator of index zero. Consequently, by Fredholm alternative, to prove that operator ${\rm A}$ is an isomorphism it is enough to prove that ${\rm A}$ is injective, i.e. $\ker\left\{{\rm A}\right\}=\left\{0\right\}$.\\

\noindent {\textbf{Step 3.}} In this step, we prove that $\ker\{{\rm A}\}=\{0\}$. For this aim, let $\left(\tilde{u},\tilde{y}\right)\in \ker\{{\rm A}\}$, i.e. 
$$
a\left((\tilde{u},\tilde{y}),(\varphi,\psi)\right)=0,\quad \forall \left(\varphi,\psi\right)\in H_0^1(0,L)\times H_0^1(0,L).
$$
Equivalently, we have  
\begin{equation}\label{LUL1}
\begin{split}
-\la^2\int_0^L\left(\tilde{u}\bar{\varphi}+\tilde{y}\bar{\psi}\right)dx+i\la\,  \int_0^L c(x)\left(\tilde{y}\bar{\varphi}-\tilde{u}\bar{\psi}\right)dx+\int_0^L\left( a\tilde{u}_{x}\bar{\varphi}_{x} 
+ \tilde{y}_{x}\bar{\psi}_{x}\right)dx\\+i\la \int_0^Lb(x)\tilde{u}_{x}\bar{\varphi}_{x}dx=0.
\end{split}
\end{equation}
Taking $\varphi=\tilde{u}$ and $\psi=\tilde{y}$ in equation \eqref{LUL1}, we get   
\begin{equation*}
-\la^2\int_{0}^{L}\abs{\tilde{u}}^2dx-\la^2\int_{0}^{L}\abs{\tilde{y}}^2dx+a\int_{0}^{L}\abs{ \tilde{u}_{x}}^2dx+\int_{0}^{L}\abs{\tilde{y}_{x}}^2dx-2\la\Im\left(\int_{0}^{L}c(x)\tilde{y}\bar{\tilde{u}}dx\right)+i\la \int_0^Lb(x)\abs{\tilde{u}_{x}}^2 dx=0.
\end{equation*}
Taking the imaginary part of the above equality, we get 
\begin{equation*}
0=\int_0^Lb(x)\abs{\tilde{u}_{x}}^2 dx,
\end{equation*}
 we get, 
\begin{equation}\label{LUL3}
\tilde{u}_{x}=0,\qquad \quad \text{in}\quad \left(\alpha_1,\alpha_3\right).
\end{equation}
Then, we find that 
$$
\left\{\begin{array}{lll}
-\la^2\tilde{u}-a\tilde{u}_{xx}+i\la c(x)\tilde{y}&=&0,\hspace{1cm}\text{in}\quad (0,L)\\[0.1in]
-\la^2\tilde{y}-a\tilde{y}_{xx}-i\la c(x)\tilde{u}&=&0,\hspace{1cm}\text{in}\quad (0,L)\label{eq-2.28}\\[0.1in]
\tilde{u}_{x}=\tilde{y}_{x}&=&0.\hspace{1cm}\text{in}\quad(\alpha_{2},\alpha_{3}) \label{eq-2.29}
\end{array}
\right.
$$
Therefore, the vector $\tilde{U}$ defined by
$$
\tilde{U}=\left(\tilde{u},i\la \tilde{u},\tilde{y},i\la \tilde{y}\right)
$$
belongs to $D(\mathcal{A})$ and we have 
$$
i\la \tilde{U}-\mathcal{A}\tilde{U}=0.
$$
Hence, $\tilde{U}\in \ker\left(i\la I-\mathcal{A}\right)$, then by Lemma \ref{ker}, we get $\tilde{U}=0$, this implies that $\tilde{u}=\tilde{y}=0$. Consequently, $\ker\left\{A\right\}=\left\{0\right\}$.\\

\noindent Therefore, from step 3 and Fredholm alternative, we get that the operator ${\rm A}$ is an isomorphism. It is easy to see that the operator ${\rm L}$ is continuous from $V$ to $L^2(0,L)\times L^2(0,L)$. Consequently, Equation \eqref{aL} admits a unique solution $(u,y)\in H_0^1(L)\times H_0^1(0,L)$. Thus, using $v=i\la u-f_1$, $z=i\la y-f_3$ and using the classical regularity arguments, we conclude that Equation \eqref{eq-2.30} admits a unique solution $U\in D\left(\mathcal{A}\right)$. The proof is thus complete. 
\end{proof}
\\

\noindent \textbf{Proof of Theorem \ref{strong stability}.}  Using Lemma \ref{ker}, we have that $\mathcal{A}$ has non pure imaginary eigenvalues. According to Lemmas \ref{ker}, \ref{surjec} and with the help of the closed graph theorem of Banach, we deduce that $\sigma(\mathcal{A})\cap i\mathbb{R}=\emptyset$. Thus, we get the conclusion by applying Theorem 
\ref{arendtbatty} of Arendt Batty (see Appendix). The proof of the theorem is thus complete.

\section{Lack of the exponential Stability} \label{section-3}
\noindent In this section, our goal is to show that system \eqref{eq1}-\eqref{eq4} in not exponentially stable.
%%%% Global Kelvin-Voigt
\subsection{Lack of exponential stability with global Kelvin-Voigt damping.} In this part, assume that 
\begin{equation}\label{Cond-Global}
b(x)=b_0>0\quad \text{and}\quad c(x)=c_0,\quad \forall\ x\in (0,L).
\end{equation}
We introduce the following theorem.
\begin{theoreme}\label{Thm. Non-Exp-Global}
{\rm Under hypothesis \eqref{Cond-Global}, for $\varepsilon>0$ small enough, we cannot expect the energy decay rate $\frac{1}{t^{\frac{2}{2-\varepsilon}}}$ for all initial data $U_0\in D(\AA)$ and for all $t>0$}. 
\end{theoreme}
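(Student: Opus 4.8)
The plan is to disprove the rate by the frequency-domain characterization of Borichev and Tomilov, which, for the bounded $C_0$-semigroup $(e^{t\mathcal{A}})_{t\ge0}$ and provided $i\R\subset\rho(\mathcal{A})$, equates the energy decay rate $t^{-2/\ell}$ for data in $D(\mathcal{A})$ with the resolvent bound $\|(i\la I-\mathcal{A})^{-1}\|_{\mathcal{L}(\mathcal{H})}=O(|\la|^{\ell})$ as $|\la|\to\infty$. Since the rate $t^{-2/(2-\eps)}$ corresponds to $\ell=2-\eps$, it suffices to exhibit a sequence of real frequencies $s_n\to+\infty$ and nonzero $\Phi_n\in D(\mathcal{A})$ for which the residual is small, namely $\|(is_n I-\mathcal{A})\Phi_n\|_{\mathcal{H}}=o\!\left(s_n^{-(2-\eps)}\|\Phi_n\|_{\mathcal{H}}\right)$; this forces $\|(is_n I-\mathcal{A})^{-1}\|$ to grow at least like $s_n^{2}$ and contradicts the bound with $\ell=2-\eps$. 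I would obtain such a sequence directly from the point spectrum of $\mathcal{A}$, exploiting that under \eqref{Cond-Global} all coefficients are constant.

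First I would look for eigenpairs by separation of variables along the Dirichlet basis $\phi_n(x)=\sin(n\pi x/L)$, with $\mu_n:=n\pi/L$. Writing $u=U_n\phi_n$, $y=Y_n\phi_n$ and $\mathcal{A}\Phi=\la\Phi$ with $\Phi=(u,\la u,y,\la y)$ reduces the eigenvalue problem, mode by mode, to the $2\times2$ linear system with matrix
$$\begin{pmatrix}\la^2+a\mu_n^2+b_0\la\mu_n^2 & c_0\la\\ -c_0\la & \la^2+\mu_n^2\end{pmatrix},$$
whose vanishing determinant is the quartic dispersion relation
$$P_n(\la):=(\la^2+a\mu_n^2+b_0\la\mu_n^2)(\la^2+\mu_n^2)+c_0^2\la^2=0.$$
Each mode then yields a genuine eigenfunction $\Phi_n=(U_n\phi_n,\la U_n\phi_n,Y_n\phi_n,\la Y_n\phi_n)\in D(\mathcal{A})$, so it is enough to track the roots of $P_n$.

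The decisive step is the asymptotic analysis of the branch of $P_n$ that perturbs the conservative frequencies $\pm i\mu_n$ of the undamped $y$-equation. Setting $\la=i\mu_n+z_n$ and inserting into $P_n$, the factor $\la^2+\mu_n^2$ becomes $2i\mu_n z_n+O(z_n^2)$ while the first factor is dominated by $ib_0\mu_n^3$; balancing the leading product against $c_0^2\la^2\sim-c_0^2\mu_n^2$ gives $z_n\sim-\dfrac{c_0^2}{2b_0\mu_n^2}$. Hence there is an eigenvalue branch $\la_n$ with $\Re(\la_n)\sim-\dfrac{c_0^2}{2b_0}\,\mu_n^{-2}<0$ and $\Im(\la_n)\sim\mu_n$, so that $|\la_n|\sim\mu_n\to\infty$ while $|\Re(\la_n)|\sim|\la_n|^{-2}$. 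The hard part will be to make this expansion rigorous: I would verify that $P_n$ really has a root in a shrinking disk around $i\mu_n$ (e.g. by Rouché's theorem applied to $P_n$ against its leading approximation $-2b_0\mu_n^4 z-c_0^2\mu_n^2$) and control the lower-order terms uniformly in $n$, and I would also check that $i\R\subset\rho(\mathcal{A})$ in the global case (arguing as in Lemmas \ref{ker} and \ref{surjec}) so that Borichev--Tomilov is applicable.

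Finally I would conclude. Setting $s_n:=\Im(\la_n)\in\R$ and using $\mathcal{A}\Phi_n=\la_n\Phi_n$, one has the exact identity $(is_n I-\mathcal{A})\Phi_n=-\Re(\la_n)\,\Phi_n$, whose residual is $|\Re(\la_n)|\,\|\Phi_n\|\sim\mu_n^{-2}\|\Phi_n\|=o\!\left(s_n^{-(2-\eps)}\|\Phi_n\|\right)$ as required, since $\mu_n^{-2}/\mu_n^{-(2-\eps)}=\mu_n^{-\eps}\to0$. As $is_n\in\rho(\mathcal{A})$, inverting this identity yields $\|(is_n I-\mathcal{A})^{-1}\|_{\mathcal{L}(\mathcal{H})}\ge|\Re(\la_n)|^{-1}\sim|\la_n|^{2}\sim s_n^{2}$, so that $s_n^{-(2-\eps)}\|(is_n I-\mathcal{A})^{-1}\|\ge C\,s_n^{\eps}\to\infty$. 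Thus the resolvent is not $O(|s|^{2-\eps})$, and by the Borichev--Tomilov equivalence the energy of smooth solutions cannot decay at the rate $t^{-2/(2-\eps)}$ for any small $\eps>0$, which is exactly the claim (and is consistent with the $t^{-1}$ decay established later).
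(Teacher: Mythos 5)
Your proposal is correct in substance but follows a genuinely different route from the paper. The paper does not compute eigenvalues at all: under \eqref{Cond-Global} it fixes the \emph{real} frequencies $\la_n=n\pi/L$, chooses the bounded data $F_n=(0,0,0,\sin(n\pi x/L))$, and writes down an explicit exact solution $U_n$ of $(i\la_n I-\AA)U_n=F_n$ along the same Dirichlet mode, with coefficients $A_n\sim \la_n^{-1}$ and $B_n\sim\la_n$; the identity $(i\la_n I-\AA)U_n=F_n$ is then a finite algebraic verification and $\|U_n\|_{\HH}\sim\la_n^2$ gives the resolvent lower bound directly, with no asymptotic expansion or Rouch\'e argument needed. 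You instead locate genuine eigenvalues: separating variables on $\sin(n\pi x/L)$ reduces the eigenvalue problem to the quartic $P_n(\la)=(\la^2+a\mu_n^2+b_0\la\mu_n^2)(\la^2+\mu_n^2)+c_0^2\la^2$, whose branch perturbing $i\mu_n$ satisfies $\Re(\la_n)\sim-\tfrac{c_0^2}{2b_0}\mu_n^{-2}$, and the exact identity $(i\Im(\la_n)I-\AA)\Phi_n=-\Re(\la_n)\Phi_n$ yields the same lower bound $\|(is_nI-\AA)^{-1}\|\gtrsim s_n^2$. Your leading-order balance is right (I checked that the neglected terms are $O(\mu_n)$ against the $\mu_n^2$ balance), and the Rouch\'e step you flag is routine; your approach costs this extra perturbation argument but buys the actual spectral picture (eigenvalues approaching $i\R$ at rate $|\la|^{-2}$), which explains heuristically why $t^{-1}$ is the natural rate in Section \ref{Section-4}. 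Two small points to keep in mind: the conclusion from the resolvent blow-up to the failure of the decay rate rests on the equivalence \eqref{h1}$\Leftrightarrow$\eqref{h2} of Theorem \ref{bt} with $\ell=2-\eps$ (the paper's citation of Huang--Pr\"uss covers only the exponential case), and, as you note, one must verify $i\R\subset\rho(\AA)$ in the global-coefficient setting before invoking that theorem; the arguments of Lemmas \ref{ker} and \ref{surjec} adapt with only simplifications.
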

%%%%%%%%% Proof of Theorem 3.1 
\begin{proof}
Following Huang and Pr$^¨u$ss \cite{Huang01,pruss01} (see also Theorem \ref{hp} in the Appendix) it is sufficient to show the existence of a real sequences $(\lambda_n)_n$ with $\lambda_n\rightarrow +\infty$, $(U_n)_n\in D(\AA)$, and $(F_n)_n \subset \HH$ such that $\left(i\lambda_nI-\AA\right)U_n=F_n$ is bounded in $\mathcal{H}$ and $\lambda_n^{-2+\varepsilon}\|U_n\|\rightarrow +\infty$. For this aim, take 
$$
F_n=\left(0,0,0,\sin\left(\frac{n\pi x}{L}\right)\right), U_n=\left(A_n\sin\left(\frac{n\pi x}{L}\right), i\lambda_n A_n\sin\left(\frac{n\pi x}{L}\right), B_n\sin\left(\frac{n\pi x}{L}\right),i\la_nB_n\sin\left(\frac{n\pi x}{L}\right)\right),
$$
where 
$$
\lambda_n=\frac{n\pi}{L},\quad A_n=\frac{iL}{c_0n\pi},\quad B_n=-\frac{inb_0\pi}{c_0^2L}-\frac{a-1}{c_0^2}.
$$
Clearly that $U_n\in D(\AA)$, and $F_n$ is bounded in $\HH$. Let us show that $(i\la_n I-\AA)U_n=F_n$. Detailing $(i\la_n I-\AA)U_n$, we get 
$$
(i\la_n I-\AA)U_n=\left(0,D_{1,n}\sin\left(\frac{n\pi x}{L}\right),0,D_{2,n}\sin\left(\frac{n\pi x}{L}\right)\right),
$$
where 
\begin{equation}\label{D1nD2n}
D_{1,n}=\frac{-\left(L^2\lambda_n^2-an^2\pi^2-i\pi^2b_0\lambda_n n^2\right)A_n}{L^2}+iB_nc_0\lambda_n,\quad \text{and}\quad D_{2,n}=-iA_n c_0\lambda_n+\frac{B_n\left(\pi^2n^2-L^2\lambda_n^2\right)}{L^2}.
\end{equation}
Inserting $\la_n,A_n,B_n$ in $D_{1,n}$ and $D_{2,n}$, we get $D_{1,n}=0$ and $D_{2,n}=1$. Hence we obtain 
$$
\left(i\la_nI-\AA\right)U_n=\left(0,0,0,\sin\left(\frac{n\pi x}{L}\right)\right)=F_n.
$$
Now, we have 
$$
\|U_n\|_{\HH}^2\geq \int_0^L\left|i\la_nB_n\sin\left(\frac{n\pi x}{L}\right)\right|^2dx=\frac{L\la_n^2}{2}\abs{B_n}^2\sim \la_n^4.
$$
Therefore, for $\varepsilon>0$ small enough, we have 
$$
\la_n^{-2+\varepsilon}\|U_n\|_{\mathcal{H}}\sim \la_n^{\varepsilon}\rightarrow +\infty.
$$
Then, we cannot expect the energy decay rate $\frac{1}{t^{\frac{2}{2-\varepsilon}}}$.
\end{proof}
\subsection{Lack of exponential stability with Local Kelvin-Voigt damping.}
In this part, under the equal speed wave propagation condition (i.e. $a=1$), we use the classical method developed  by Littman and Markus in \cite{Littman1988} (see also \cite{CurtainZwart01}), to show that system \eqref{eq1}-\eqref{eq4} with Local Kelvin-Voigt damping and global coupling is not exponentially stable. For this aim, assume that 
\begin{equation}\label{Cond-local}
a=1,\quad b(x)=\left\{\begin{array}{ccc}
0&\text{if}&0<x\leq \frac{1}{2},\\
1&\text{if}&\frac{1}{2}<x\leq 1.
\end{array}
\right.,\quad \text{and}\quad c(x)=c\in \R.
\end{equation}
Our main result in this part is following theorem.  
\begin{theoreme}\label{Thm. Non-Exp-local}
Under condition \eqref{Cond-local}. The semigroup of contractions $\left(e^{t\AA}\right)_{t\geq 0}$generated by the operator $\AA$ is not exponentially stable in the energy space $\HH$.
\end{theoreme}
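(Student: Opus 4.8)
The plan is to follow the spectral method of Littman and Markus. If the semigroup $(e^{t\AA})_{t\geq 0}$ were exponentially stable, its spectral bound would satisfy $s(\AA)<0$, so every eigenvalue of $\AA$ would lie in a half–plane $\{\Re\la\leq -\omega\}$ for some fixed $\omega>0$. I would contradict this by exhibiting a sequence of eigenvalues $(\la_n)_n\subset\sigma(\AA)$ with $|\Im\la_n|\to+\infty$ and $\Re\la_n\to 0^-$. By Theorem \ref{strong stability} we already know $\sigma(\AA)\cap i\R=\emptyset$, so such a branch would accumulate on the imaginary axis without ever touching it, which is precisely the obstruction to exponential stability.

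First I would translate the eigenvalue equation $\AA U=\la U$, written with $U=(u,\la u,y,\la y)$, into the coupled second–order system obtained by eliminating $v=\la u$ and $z=\la y$: taking $a=1$, this reads $\big((1+\la b(x))u_x\big)_x-\la^2 u=c\la y$ and $y_{xx}-\la^2 y=-c\la u$, supplemented by the Dirichlet conditions at $x=0,1$ and by the transmission conditions at $x=\frac{1}{2}$ inherited from $U\in D(\AA)$: continuity of $u$, $y$ and $y_x$, together with continuity of the flux $(1+\la b(x))u_x$, i.e. $u_x(\frac{1}{2}^-)=(1+\la)u_x(\frac{1}{2}^+)$. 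The jump in $b$ across the interface is exactly what makes this last matching nontrivial.

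On the undamped interval $(0,\frac{1}{2})$ the substitution $w_\pm=u\pm i y$ decouples the system into $(w_\pm)_{xx}=(\la^2\mp ic\la)w_\pm$, so its four characteristic roots are $\pm\sqrt{\la^2-ic\la}$ and $\pm\sqrt{\la^2+ic\la}$, all of size $\approx\pm\la$. On the damped interval $(\frac{1}{2},1)$ eliminating $u$ yields the fourth–order equation $(1+\la)y_{xxxx}-\la^2(2+\la)y_{xx}+\la^2(\la^2+c^2)y=0$, whose characteristic polynomial is quadratic in $s^2$; its two branches satisfy $s_+^2\sim\la^2$ and $s_-^2\sim\la$ for large $|\la|$, so the damped side carries two propagating roots $\approx\pm\la$ and two parabolic boundary–layer roots $\approx\pm\sqrt{\la}$. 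Writing the general solution on each side as a combination of the corresponding exponentials and imposing the four Dirichlet conditions at the endpoints together with the four transmission conditions at $x=\frac{1}{2}$ produces an $8\times 8$ homogeneous linear system whose determinant $\Delta(\la)$ is the characteristic function, its zeros being exactly the eigenvalues.

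The decisive and most delicate step is the asymptotic analysis of $\Delta(\la)$ along $\la=i\tau+o(1)$ with $\tau\to+\infty$. Here the hypothesis $a=1$ is essential: it forces the propagating roots on both sides to coincide to leading order ($\approx\pm\la$), so that the interface reflection/transmission coefficients simplify and the parabolic roots $\pm\sqrt{\la}$, whose contribution is exponentially localized near $x=\frac{1}{2}$, decouple from the dominant balance. After factoring out the exponentially large and small terms I expect $\Delta(\la)=0$ to reduce, to leading order, to a transcendental dispersion relation governing the modes essentially supported on the undamped half $(0,\frac{1}{2})$, whose solutions form a branch $(\la_n)_n$ with $|\Im\la_n|\to+\infty$ and $\Re\la_n\to 0^-$. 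The crux is to show that this branch genuinely approaches the imaginary axis — that $\Re\la_n\to 0$ rather than remaining at a fixed negative distance — which quantifies how weakly the damping localized in $(\frac{1}{2},1)$ acts on high–frequency modes concentrated away from it in $(0,\frac{1}{2})$. A Rouché (or implicit function) argument then locates the exact roots near the approximate ones and confirms $\Re\la_n\to 0^-$, completing the proof.
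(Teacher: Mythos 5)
Your overall strategy coincides with the paper's: both proofs follow Littman--Markus, reduce the eigenvalue problem to a transmission problem across $x=\tfrac{1}{2}$, identify the same characteristic roots (two of size $\approx\pm\la$ on each side plus two parabolic roots $\approx\pm\sqrt{\la}$ on the damped side, exactly the $r_1,r_2,s_1,s_2$ of \eqref{r1r2}--\eqref{s1s2}), form the characteristic determinant from the Dirichlet and transmission conditions, and invoke Rouch\'e to locate a branch of eigenvalues approaching $i\R$. Your decoupling $w_\pm=u\pm iy$ on the undamped side and your $8\times 8$ formulation are cosmetic variants of the paper's $4\times 4$ determinant $\det(M)$.

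The gap is that the decisive step is announced rather than proved. Everything hinges on two facts that you only ``expect'': (i) after factoring out the exponentially large terms, the determinant admits an expansion $F(\la)=f_0(\la)+O(\la^{-1/2})$ whose leading term has \emph{purely imaginary} zeros, and (ii) the remainder is small enough on suitable shrinking circles around those zeros for Rouch\'e to apply. Point (i) is not automatic: a priori the leading-order dispersion relation of a damped/undamped transmission problem could perfectly well have its roots at a fixed negative distance from $i\R$ (which is what happens for many locally damped systems that \emph{are} exponentially stable), and nothing in your outline rules this out. In the paper this is settled by the explicit computation \eqref{Flambda}--\eqref{coefofFlambda}, giving $f_0(\la)=2\cosh(\la/2)\left(\cosh(\la)-\cos^2(c/4)\right)$, whose zeros $2n\pi i+i\pi$ and $2n\pi i+i\arccos\left(\cos^2(c/4)\right)$ lie on $i\R$; the lower bound $|f_0(\la)|\geq C r_n$ on circles of radius $r_n=|n|^{-1/4}$, against the $O(|n|^{-1/2})$ remainder, is what makes Rouch\'e work (Lemma \ref{lemma-eigen}). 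Until you actually carry out this expansion --- which is the bulk of the paper's proof --- the claim $\Re\la_n\to 0$ is unsupported. A minor simplification in your favor: once Rouch\'e places $\la_n$ within distance $r_n\to 0$ of a purely imaginary point, $\Re\la_n\to 0$ follows, and $\Re\la_n<0$ is automatic from dissipativity together with Lemma \ref{ker}; so for the bare non-exponential-stability statement you do not need the finer expansions of $\epsilon_{1,n},\epsilon_{2,n}$ that the paper computes in Proposition \ref{ev}.
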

\noindent For the proof of Theorem  \ref{Thm. Non-Exp-local}, we recall the following definitions: the growth bound 
$\omega_0\left(\mathcal{A}\right)$ and the 
the spectral bound $s\left(\mathcal{A}\right)$ of $\mathcal{A}$
are defined respectively as 
$$
\omega_0\left(\mathcal{A}\right)= \inf\left\{\omega\in \mathbb{R}:\  
\text{ there exists a constant }  M_\omega \text{ such that } \forall\ t\geq0,\ \left\|e^{t\mathcal{A}_1}\right\|_{\mathcal{L}(\mathcal{H}_1)}\leq M_\omega e^{\omega t}\right\}
$$
and 
$$
s\left(\mathcal{A}\right)=\sup\left\{\Re\left(\lambda\right):\ \lambda\in \sigma\left(\mathcal{A}\right) \right\}.
$$
Then, according to Theorem 2.1.6 and Lemma 2.1.11 in \cite{CurtainZwart01}, one has that 
$$
s\left(\mathcal{A}_1\right)\leq \omega_0\left(\mathcal{A}_1\right).
$$
By the previous results, one clearly has that $
s\left(\mathcal{A}\right)\leq 0$ and the theorem would follow if equality holds in the previous inequality. It therefore amounts to 
show the existence of a sequence of eigenvalues of $\mathcal{A}$ whose real parts tend to zero.\\[0.1in]
Since $\mathcal{A}$ is dissipative, we fix $\alpha_0>0$ small enough and we study the asymptotic behavior of the eigenvalues $\lambda$ of 
$\mathcal{A}$ in the strip 
$$S=\left\{\lambda \in \mathbb{C}:-\alpha_0\leq \text{Re}(\lambda)\leq 0\right\}.$$
First, we determine the characteristic equation satisfied by the eigenvalues of $\AA$. For this aim, let $\lambda\in \C^{\ast}$ be an eigenvalue of $\AA$ and let $U=(u,\la u,y,\la y)\in D(\AA)$ be an associated eigenvector. Then, the eigenvalue problem is given by 
\begin{eqnarray}
\la^2 u-\left(1+\lambda\right)u_{xx}+c\la y&=&0,\quad x\in (0,1),\label{LE-1}\\
\la^2 y-y_{xx}-c\la u&=&0,\quad x\in (0,1)\label{LE-2},
\end{eqnarray}
with the boundary conditions 
$$
u(0)=u(1)=y(0)=y(1)=0.
$$
We define 
$$
\left\{\begin{array}{ccc}
u^-(x):=u(x),&y^{-}(x):=y(x)&x\in (0,\frac{1}{2}),\\
u^+(x):=u(x),&y^{+}(x):=y(x)&x\in [\frac{1}{2},1).
\end{array}
\right.
$$
Then, system \eqref{LE-1}-\eqref{LE-2} becomes
\begin{eqnarray}
\la^2u^--u^-_{xx}+c\la y^-&=&0,\quad x\in (0,1/2),\label{NE-Trans1}\\
\la^2y^--y^-_{xx}-c\la u^-&=&0,\quad x\in (0,1/2)\label{NE-Trans2},\\
\la^2u^+-(1+\la)u^+_{xx}+c\la y^+&=&0,\quad x\in [1/2,1)\label{NE-Trans3},\\
\la^2y^+-y_{xx}^+-c\la u^+&=&0,\quad x\in [1/2,1),\label{NE-Trans4}
\end{eqnarray}
with the boundary conditions 
\begin{eqnarray}
u^-(0)=y^-(0)=0,\label{NE-Trans5}\\
u^+(1)=y^+(1)=0,\label{NE-Trans6}
\end{eqnarray}
and the continuity conditions 
\begin{eqnarray}
u^{-}(1/2)=u^+(1/2),\label{NE-CC-1}\\
u_x^-(1/2)=(1+\la)u_x^+(1/2),\label{NE-CC-2}\\
y^-(1/2)=y^+(1/2),\label{NE-CC-3}\\
y_x^-(1/2)=y_x^{+}(1/2).\label{NE-CC-4}
\end{eqnarray}
Here and below, in order to handle, in the case where $z$ is a non zero non-real number, we denote by $\sqrt{z}$ the square root of $z$; i.e., the unique complex number whose square is equal to $z$, that is defined by 
$$
\sqrt{z}=\sqrt{\frac{\abs{z}+\Re(z)}{2}}+i\ \text{sign}(\Im(z))\sqrt{\frac{\abs{z}-\Re(z)}{2}}.
$$
Our aim is to study the asymptotic behavior of the largest eigenvalues $\la$ of $\AA$ in $S$. By taking $\la$ large enough, the general solution of System \eqref{NE-Trans1}-\eqref{NE-Trans2} with boundary condition \eqref{NE-Trans5} is given by 
$$
\left\{\begin{array}{ccl}
u^-(x)&=&d_1\dfrac{\la^2-r_1^2}{c\,\la}\sinh(r_1x)+d_2\dfrac{\la^2-r_2^2}{c\,\la}\sinh(r_2x),\\[0.1in]
y^-(x)&=&d_1\sinh(r_1x)+d_2\sinh(r_2x),
\end{array}
\right.
$$  
and the general solution of System \eqref{NE-Trans1}-\eqref{NE-Trans2} with boundary condition \eqref{NE-Trans6} is given by 
$$
\left\{\begin{array}{ccl}
u^+(x)&=&-D_1\dfrac{\la^2-s_1^2}{c\,\la}\sinh(s_1(1-x))-D_2\dfrac{\la^2-s_2^2}{c\,\la}\sinh(s_2(1-x)),\\[0,1in]
y^+(x)&=&-D_1\sinh(s_1(1-x))-D_2\sinh(s_2(1-x)),
\end{array}\right.
$$
where $d_1,d_2,D_1,D_2\in \C$, 
\begin{equation}\label{r1r2}
r_1=\lambda\sqrt{1+\frac{ic}{\la}},\qquad r_2=\lambda\sqrt{1-\frac{ic}{\lambda}}
\end{equation}
and 
\begin{equation}\label{s1s2}
s_1=\la\sqrt{\frac{1+\frac{2}{\la}+\sqrt{1-\frac{4c^2}{\la^3}-\frac{4c^2}{\la^4}}}{2\left(1+\frac{1}{\la}\right)}},\qquad s_2=\sqrt{\la}\sqrt{\frac{\la+2-\la\sqrt{1-\frac{4c^2}{\la^3}-\frac{4c^2}{\la^4}}}{2\left(1+\frac{1}{\la}\right)}}.
\end{equation}
The boundary conditions in \eqref{NE-CC-1}-\eqref{NE-CC-4}, can be expressed by $M(d_1\ d_2\ D_1\ D_2)^{\top}=0$, where 
$$
M=\left(\begin{array}{cccc}
\sinh(\frac{r_1}{2})&\sinh(\frac{r_2}{2})&\sinh(\frac{s_1}{2})&\sinh(\frac{s_2}{2})\\[0.1in]
r_1\cosh(\frac{r_1}{2})&r_2\cosh(\frac{r_2}{2})&-s_1\cosh(\frac{s_1}{2})&-s_2\cosh(\frac{s_2}{2})\\[0.1in]
r_1^2\sinh(\frac{r_1}{2})&r_2^2\sinh(\frac{r_2}{2})&s_1^2\sinh(\frac{s_1}{2})&s_2^2\sinh(\frac{s_2}{2})\\[0.1in]
r_1^3\cosh(\frac{r_1}{2})&r_2^3\cosh(\frac{r_2}{2})&-s_1(s_1^2-\lambda (\lambda^2-s_1^2))\cosh(\frac{s_1}{2})&-s_2(s_2^2-\lambda (\lambda^2-s_2^2))\cosh(\frac{s_2}{2})
\end{array}\right)
$$
System \eqref{NE-Trans1}-\eqref{NE-CC-4} admits a non trivial solution if and only if $det(M)=0$. Using Gaussian elimination, $det(M)=0$ is equivalent to $det(M_1)=0$, where $M_1$ is given by 
$$
M_1=\left(\begin{array}{cccc}
\sinh(\frac{r_1}{2})&\sinh(\frac{r_2}{2})&\sinh(\frac{s_1}{2})&1-e^{-s_2}\\[0.1in]
r_1\cosh(\frac{r_1}{2})&r_2\cosh(\frac{r_2}{2})&-s_1\cosh(\frac{s_1}{2})&-s_2(1+e^{-s_2})\\[0.1in]
r_1^2\sinh(\frac{r_1}{2})&r_2^2\sinh(\frac{r_2}{2})&s_1^2\sinh(\frac{s_1}{2})&s_2^2(1-e^{-s_2})\\[0.1in]
r_1^3\cosh(\frac{r_1}{2})&r_2^3\cosh(\frac{r_2}{2})&-s_1(s_1^2-\lambda (\lambda^2-s_1^2))\cosh(\frac{s_1}{2})&-s_2(s_2^2-\lambda (\lambda^2-s_2^2))(1+e^{-s_2})
\end{array}\right).
$$
Then, we get 
\begin{equation}\label{DetM1}
det(M_1)=F_1+F_2e^{-s_2},
\end{equation}
where
\begin{equation*}
\begin{array}{rl}
F_1=

&\displaystyle{-s_1 s_2 \left(r_1^2-r_2^2\right)
\left(s_1^2-s_2^2\right)\left(\,\lambda+1\right)
\sinh\left(\frac{r_1}{2} \right)
\sinh\left(\frac{r_2}{2} \right)
\cosh\left(\frac{s_1}{2} \right)
 }\\ \noalign{\medskip}

&\displaystyle{+r_1 s_2 \left(r_2^2-s_1^2\right)
\left((\lambda^2-s_2^2)\, \,\lambda +r_1^2-s_2^2\right)
\cosh\left(\frac{r_1}{2} \right)
\sinh\left(\frac{r_2}{2} \right)
\sinh\left(\frac{s_1}{2} \right)
 }\\ \noalign{\medskip}

&\displaystyle{-r_2 s_2 \left(r_1^2-s_1^2\right)
\left((\lambda^2-s_2^2)\, \,\lambda +r_2^2-s_2^2\right)
\sinh\left(\frac{r_1}{2} \right)
\cosh\left(\frac{r_2}{2} \right)
\sinh\left(\frac{s_1}{2} \right) }\\ \noalign{\medskip}

&\displaystyle{-r_1 r_2 \left(r_1^2-r_2^2\right)
\left(s_1^2-s_2^2\right)
\cosh\left(\frac{r_1}{2} \right)
\cosh\left(\frac{r_2}{2} \right)
\sinh\left(\frac{s_1}{2} \right)
 }\\ \noalign{\medskip}

&\displaystyle{+r_2 s_1 \left(r_1^2-s_2^2\right)
\left((\lambda^2-s_1^2)\, \,\lambda +r_2^2-s_1^2\right)
\sinh\left(\frac{r_1}{2} \right)
\cosh\left(\frac{r_2}{2} \right)
\cosh\left(\frac{s_1}{2} \right)
}\\ \noalign{\medskip}

&\displaystyle{-r_1 s_1 \left(r_2^2-s_2^2\right)
\left((\lambda^2-s_1^2)\, \,\lambda +r_1^2-s_1^2\right)
\cosh\left(\frac{r_1}{2} \right)
\sinh\left(\frac{r_2}{2} \right)
\cosh\left(\frac{s_1}{2} \right) }

\end{array}
\end{equation*}
and
\begin{equation*}
\begin{array}{rl}
F_2=

&\displaystyle{-s_1 s_2 \left(r_1^2-r_2^2\right)
\left(s_1^2-s_2^2\right)\left(\,\lambda+1\right)
\sinh\left(\frac{r_1}{2} \right)
\sinh\left(\frac{r_2}{2} \right)
\cosh\left(\frac{s_1}{2} \right)
 }\\ \noalign{\medskip}

&\displaystyle{+r_1 s_2 \left(r_2^2-s_1^2\right)
\left((\lambda^2-s_2^2)\, \,\lambda +r_1^2-s_2^2\right)
\cosh\left(\frac{r_1}{2} \right)
\sinh\left(\frac{r_2}{2} \right)
\sinh\left(\frac{s_1}{2} \right)
 }\\ \noalign{\medskip}

&\displaystyle{-r_2 s_2 \left(r_1^2-s_1^2\right)
\left((\lambda^2-s_2^2)\, \,\lambda +r_2^2-s_2^2\right)
\sinh\left(\frac{r_1}{2} \right)
\cosh\left(\frac{r_2}{2} \right)
\sinh\left(\frac{s_1}{2} \right) }\\ \noalign{\medskip}

&\displaystyle{+r_1 r_2 \left(r_1^2-r_2^2\right)
\left(s_1^2-s_2^2\right)
\cosh\left(\frac{r_1}{2} \right)
\cosh\left(\frac{r_2}{2} \right)
\sinh\left(\frac{s_1}{2} \right)
 }\\ \noalign{\medskip}

&\displaystyle{-r_2 s_1 \left(r_1^2-s_2^2\right)
\left((\lambda^2-s_1^2)\, \,\lambda +r_2^2-s_1^2\right)
\sinh\left(\frac{r_1}{2} \right)
\cosh\left(\frac{r_2}{2} \right)
\cosh\left(\frac{s_1}{2} \right)
}\\ \noalign{\medskip}

&\displaystyle{+r_1 s_1 \left(r_2^2-s_2^2\right)
\left((\lambda^2-s_1^2)\, \,\lambda +r_1^2-s_1^2\right)
\cosh\left(\frac{r_1}{2} \right)
\sinh\left(\frac{r_2}{2} \right)
\cosh\left(\frac{s_1}{2} \right) }.

\end{array}
\end{equation*}
%%%%%%%%%% Real part of Lambda is bounded 
\begin{lemma}
{\rm Let $\la\in \C$ be an eigenvalue of $\AA$. Then, we have $\Re(\la)$ is bounded}.
\end{lemma}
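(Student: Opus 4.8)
The plan is to combine the dissipativity already established with a contradiction argument driven by the characteristic equation. Since $\AA$ is m-dissipative, every eigenvalue satisfies $\Re(\la)\le 0$, so it suffices to bound $\Re(\la)$ from below. Moreover, if $U=(u,\la u,y,\la y)$ is a normalized eigenvector, the dissipation identity $\Re(\la)=\Re\langle\AA U,U\rangle=-\int_{1/2}^1|\la u_x|^2\,dx$ shows that $|\Re(\la)|\le|\la|^2$, so $\Re(\la)\to-\infty$ forces $|\la|\to+\infty$. I would therefore argue by contradiction: assume a sequence of eigenvalues $\la_n$ with $\Re(\la_n)\to-\infty$ (hence $|\la_n|\to+\infty$) and seek an incompatibility in the characteristic relation $\det(M_1)=F_1+F_2e^{-s_2}=0$.

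First I would expand the four roots for $|\la|\to+\infty$. One checks that $r_1,r_2$ and $s_1$ are all asymptotic to $\la$ (with $O(1)$ corrections), whereas the distinguished viscoelastic root behaves like $s_2=\sqrt{\la-1}+o(1)\sim\sqrt{\la}$; this root is the crux of the matter. Substituting these expansions into $F_1$ and $F_2$, and using that the two expressions coincide in three of their six terms and differ only by a sign in the other three, I expect $\det(M_1)=0$ to reduce, after dividing by a nonvanishing factor of the form $\la^{5}\sinh(\la/2)\cosh(\la/2)$, to a leading-order relation of the shape $\la^{1/2}\tanh(\la/2)\,(1+e^{-s_2})+(1-e^{-s_2})=0$, that is, $e^{-s_2}=\dfrac{T+1}{1-T}$ with $T:=\la^{1/2}\tanh(\la/2)$.

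The decisive point is the branch convention fixed in the text: since $\Re(\sqrt{z})\ge 0$ for every $z$, the root $s_2=\sqrt{\la-1}+o(1)$ satisfies $\Re(s_2)\ge 0$, whence $|e^{-s_2}|\le 1$. Reading off $\Re(s_2)$ from the characteristic relation gives $\Re(s_2)=-\ln|e^{-s_2}|\sim -2\Re(T)/|T|^2$, while reading it off from the definition gives $\Re(s_2)\sim\Re(\sqrt{\la})$. As $\Re(\la)\to-\infty$ one has $\tanh(\la/2)\to-1$, hence $T\sim-\sqrt{\la}$ and $|T|^2\sim|\la|$, so the first expression is of order $\Re(\sqrt{\la})/|\la|$ while the second is of order $\Re(\sqrt{\la})$. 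Equating them forces $|\la|=O(1)$, contradicting $|\la_n|\to+\infty$; this yields the desired lower bound and proves that $\Re(\la)$ is bounded.

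The main obstacle is the asymptotic analysis of $\det(M_1)$: one must justify the expansions of $r_1,r_2,s_1,s_2$ and, in particular, verify that the leading coefficients of the three dominant terms of $F_1$ (respectively $F_2$) do not cancel, so that the reduction to the relation above is legitimate. A secondary difficulty is that the comparison of the two expressions for $\Re(s_2)$ must be carried out uniformly over all directions in which $\la$ may tend to infinity in the left half-plane — both when $\la$ stays close to the negative real axis, where $\Re(\sqrt{\la})$ degenerates and one must instead exploit the matching of $\Im(s_2)$, and when $\arg(\la)$ is bounded away from $\pm\pi$ — since the scaling mismatch depends on $\arg(\la)$ through the branch cut of $\sqrt{\cdot}$.
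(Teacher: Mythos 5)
Your strategy is genuinely different from the paper's and, as it stands, it has real gaps. The paper disposes of this lemma in a few lines by a direct multiplier identity: multiplying \eqref{NE-Trans1}--\eqref{NE-Trans4} by $\bar u^-,\bar y^-,\bar u^+,\bar y^+$, integrating, and using the transmission conditions, the only term in which $\Re(\la)$ appears with a potentially large coefficient is $\left(1+\Re(\la)\right)\|u_x^+\|^2$; since $u_x^+\neq 0$ and the remaining terms are controlled by $\|U\|_{\HH}=1$, this forces $\Re(\la)$ to be bounded below. No information about the characteristic determinant is needed. By contrast, you propose to extract a contradiction from $\det(M_1)=F_1+F_2e^{-s_2}=0$ in the regime $\Re(\la)\to-\infty$. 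The first problem is that the asymptotic form of $\det(M_1)$ you invoke is not available there: in the paper the expansions \eqref{F1New}, \eqref{FiniteF2} and \eqref{finitechsh}, and in particular the estimate $e^{-s_2}=o(\la^{-5/2})$ and the identification of the dominant hyperbolic terms, are all derived \emph{using} the boundedness of $\Re(\la)$ — i.e.\ using the very lemma you are trying to prove. When $\Re(\la)\to-\infty$ the six products of $\sinh$ and $\cosh$ factors in $F_1$ and $F_2$ all grow like $e^{3\abs{\Re(\la)}/2}$ and the bookkeeping of which exponentials dominate is entirely different; your claimed reduction to $\la^{1/2}\tanh(\la/2)(1+e^{-s_2})+(1-e^{-s_2})=0$ is asserted, not derived, and the non-cancellation of leading coefficients that it presupposes is exactly what would have to be checked.

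The second, more decisive, gap is in the endgame. Your contradiction rests on comparing $\Re(s_2)$ read off from the characteristic relation (of order $\Re(\sqrt{\la})/\abs{\la}$) with $\Re(s_2)\sim\Re(\sqrt{\la})$ from the definition, and concluding $\abs{\la}=O(1)$. But if $\Re(\la_n)\to-\infty$ with $\Im(\la_n)$ bounded — which is precisely the scenario one must exclude, since nothing a priori prevents eigenvalues from running off along the negative real axis — then $\arg(\la_n)\to\pm\pi$ and $\Re(\sqrt{\la_n})\to 0$, so both sides of your comparison tend to $0$ and no contradiction is obtained. You acknowledge this degenerate direction yourself and defer to "the matching of $\Im(s_2)$", but that step is not carried out, and it is the whole content of the lemma in that regime. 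In short: the approach is not circular in principle, but it requires a full re-expansion of $\det(M_1)$ in the left half-plane plus a uniform argument over all directions of escape, neither of which is supplied; the paper's energy identity avoids all of this.
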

\begin{proof}
Multiplying equations \eqref{NE-Trans1}-\eqref{NE-Trans4} by $u^-,y^-,u^+,y^+$ respectively, then using the boundary conditions, we get
\begin{equation}\label{Bounded-Re(la)}
\|\la u^-\|^2+\|u_x^-\|^2+\|\la y^-\|^2+\|y_x^-\|^2+\|\la u^+\|^2+\left(1+\Re(\la)\right)\|u_x^+\|^2+\|\la y^+\|^2+\|y_x^+\|^2=0.
\end{equation}
Since the operator $\AA$ is dissipative then the real part of $\lambda$ is negative. It is easy to see that $u^+_x\neq 0$, hence using the fact that $\|U\|_{\HH}=1$ in \eqref{Bounded-Re(la)}, we get that $\Re(\la)$ is bounded below. Therefore, there exists $\alpha>0$, such that 
$$
-\alpha\leq \Re(\la)<0.
$$
\end{proof}
%%%%%%%%% Eigenvalues 
\begin{pro}\label{ev}
{\rm Assume that the condition \eqref{Cond-local} holds. Then there exists $n_0\in \mathbb{N}$ sufficiently large and two sequences $\left(\la_{1,n}\right)_{\abs{n}\geq n_0}$ and $\left(\la_{2,n}\right)_{\abs{n}\geq n_0}$ of simple root of $det(M_1)$ satisfying the following asymptotic behavior}:\\
{\rm \textbf{Case 1.} If $\sin\left(\frac{c}{4}\right)\neq 0$, then 
\begin{equation}\label{1eigenvalue1}
\la_{1,n}=2n\pi i+i\pi -\frac{2\sin^2(\frac{c}{4})(1-i\,sign(n))}{\left(3+\cos(\frac{c}{2})\right)\sqrt{\abs{n}\pi}}+O\left(\frac{1}{n}\right)
\end{equation}
and
\begin{equation}\label{1eigenvalue2}
\la_{2,n}=2n\pi i+i\arccos\left(\cos^2\left(\frac{c}{4}\right)\right)-\frac{\gamma}{\sqrt{\abs{n}\pi}}+i\frac{sign(n)\gamma }{\sqrt{\abs{n}\pi}}+O\left(\frac{1}{n}\right),
\end{equation}
where 
$$
\gamma=\frac{\left(\cos(\frac{c}{2})\sin\left(\frac{\arccos\left(\cos^2(\frac{c}{4})\right)}{2}\right)+\sin\left(\frac{3\arccos\left(\cos^2(\frac{c}{4})\right)}{2}\right)\right)}{4\sqrt{1-cos^4\left(\frac{c}{4}\right)}\cos\left(\frac{\arccos\left(\cos^2(\frac{c}{4})\right)}{2}\right)}.
$$
\textbf{Case 2.} If $\sin\left(\frac{c}{4}\right)= 0$, then 
\begin{equation}\label{2eigenvalue1}
\la_{1,n}=2n\pi i+i\pi+\frac{i\ c^2}{32\pi n}-\frac{(4+i\pi)c^2}{64\pi^2n^2}+O\left(\frac{1}{\abs{n}^{\frac{5}{2}}}\right)
\end{equation}
and 
\begin{equation}\label{2eigenvalue2}
\la_{2,n}=2n\pi i+O\left(\frac{1}{n}\right).
\end{equation}
}
\end{pro}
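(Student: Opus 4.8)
The plan is to locate the roots of the characteristic function $\det(M_1)=F_1+F_2e^{-s_2}$ of \eqref{DetM1} inside the strip $S$ with $|\Im(\la)|$ large, and to show they split into the two announced families. First I would record the asymptotics of $r_1,r_2,s_1,s_2$. From \eqref{r1r2}, $r_1=\la+\tfrac{ic}{2}+O(\la^{-1})$ and $r_2=\la-\tfrac{ic}{2}+O(\la^{-1})$, while a Taylor expansion of \eqref{s1s2} gives $s_1=\la+O(\la^{-2})$ and, crucially, $s_2=\sqrt{\la}+O(\la^{-1/2})$. Since the preceding lemma guarantees $\Re(\la)$ is bounded, for $\la\approx 2n\pi i$ one has $\Re(s_2)\sim\sqrt{\pi|n|}\to+\infty$, so $e^{-s_2}$ is smaller than every power $|n|^{-k}$. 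Hence, for a polynomial-order expansion the term $F_2e^{-s_2}$ is negligible and the characteristic equation reduces to $F_1=0$ modulo super-polynomially small errors; the concluding Rouch\'e step will have to justify this discarding rigorously.

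Second, I would extract the limiting equation from $F_1$. Writing $\mu=\la/2$ and using that $\Re(\la)$ is bounded, every factor $\sinh(\tfrac{r_i}{2}),\cosh(\tfrac{r_i}{2}),\sinh(\tfrac{s_1}{2}),\cosh(\tfrac{s_1}{2})$ stays $O(1)$, so the order of each of the six summands of $F_1$ is governed by its polynomial prefactor. A term-by-term count shows the first three summands are of order $\la^{11/2}$ and the last three of order $\la^{5}$. Factoring $-ic\,\la^{11/2}$ out of the dominant part and using $\sinh(\tfrac{r_1}{2})\sinh(\tfrac{r_2}{2})=\sinh^2\mu\cos^2(\tfrac c4)+\cosh^2\mu\sin^2(\tfrac c4)$ together with $\cosh(\tfrac{r_1}2)\sinh(\tfrac{r_2}2)+\sinh(\tfrac{r_1}2)\cosh(\tfrac{r_2}2)=\sinh(2\mu)$, the leading equation collapses to
\[
\cosh\!\Big(\frac{\la}{2}\Big)\Big(2\sinh^2\!\Big(\frac{\la}{2}\Big)+\sin^2\!\Big(\frac c4\Big)\Big)=0 .
\]
Its two factors produce the two families: $\cosh(\la/2)=0$ gives the leading value $2n\pi i+i\pi$ of $\la_{1,n}$, while $2\sinh^2(\la/2)=-\sin^2(c/4)$, via the half-angle identity $\sin^2(\tfrac12\arccos(\cos^2\tfrac c4))=\tfrac12\sin^2(\tfrac c4)$, gives the leading value $2n\pi i+i\arccos(\cos^2(c/4))$ of $\la_{2,n}$.

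Third, I would compute the corrections by perturbation combined with Rouch\'e's theorem. Keeping the next order means retaining the three order-$\la^{5}$ summands (a factor $\la^{-1/2}$ below the dominant part) together with the $O(\la^{-1})$ corrections to $r_i,s_1$; this yields a normalized equation $G(\la)+\la^{-1/2}G_1(\la)+O(\la^{-1})=0$, where $G(\la)=\cosh(\la/2)(2\sinh^2(\la/2)+\sin^2(c/4))$. In Case 1 ($\sin(c/4)\neq 0$) the two leading values are simple zeros of $G$; applying Rouch\'e on circles of radius $\asymp|n|^{-1/2}$ around each approximate value shows exactly one simple root of $\det(M_1)$ lies inside, and solving $G'(\la^{(0)})\,\delta+\la^{-1/2}G_1(\la^{(0)})=0$ produces the $|n|^{-1/2}$ corrections of \eqref{1eigenvalue1}--\eqref{1eigenvalue2}; the factor $1-i\,\sign(n)$ enters through $\sqrt{\la}=\sqrt{2n\pi i}$, whose argument flips with the sign of $n$.

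Finally, Case 2 ($\sin(c/4)=0$) is degenerate: the $\la^{-1/2}$ correction carries a factor $\sin^2(c/4)$ and vanishes, and the second leading value $2n\pi i$ becomes a \emph{double} zero of $2\sinh^2(\la/2)$. One must therefore push the expansion to orders $\la^{-1}$ and $\la^{-2}$, treating $\sinh^2(\la/2)$ directly for $\la_{2,n}$; this gives \eqref{2eigenvalue1} for $\la_{1,n}$ and the weaker estimate \eqref{2eigenvalue2} for $\la_{2,n}$. I expect the main obstacle to be precisely this bookkeeping: carrying the multi-scale expansion (in powers of both $\la^{-1/2}$ and $\la^{-1}$) to sufficient precision, bounding all remainders—including the exponentially small $F_2e^{-s_2}$—uniformly enough for Rouch\'e to apply, and handling the degenerate double root of Case 2 that forces the higher-order analysis.
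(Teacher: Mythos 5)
Your proposal follows essentially the same route as the paper's proof: discard the super-polynomially small term $F_2e^{-s_2}$, reduce the characteristic equation to the leading function $f_0(\la)=2\cosh\left(\frac{\la}{2}\right)\left(\cosh\la-\cos^2\left(\frac{c}{4}\right)\right)$ (your factor $2\sinh^2\left(\frac{\la}{2}\right)+\sin^2\left(\frac{c}{4}\right)$ is identical to $\cosh\la-\cos^2\left(\frac{c}{4}\right)$), localize the roots near those of $f_0$ by Rouch\'e's theorem, and compute the corrections by a perturbation expansion in powers of $\la^{-1/2}$, with the degenerate Case 2 handled by pushing to higher order at the double root. The one detail worth adjusting is the Rouch\'e radius: since $\abs{F-f_0}$ is itself of size $\abs{n}^{-1/2}$, the paper takes circles of radius $\abs{n}^{-1/4}$ (and $\abs{n}^{-1/8}$ at the double root of Case 2) so that $\abs{f_0}$ strictly dominates on the boundary, whereas your borderline choice $\asymp\abs{n}^{-1/2}$ would force a delicate comparison of constants.
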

%%%%%%%%%%%%%%%%%%%%%  
The proof of Proposition \ref{ev}, is divided into two lemmas.
\begin{lemma}\label{Finit-of-Determinant}
{\rm Assume that condition \eqref{Cond-local} holds. Let $\lambda$ be largest eigenvalue of $\AA$, then $\lambda$ is large root of the following asymptotic behavior estimate 
\begin{equation}\label{Flambda}
F(\la):= f_0(\lambda)+\frac{f_1(\la)}{\la^{1/2}}+\frac{f_2(\la)}{8\la}+\frac{f_3(\la)}{8\la^{3/2}}+\frac{f_4(\la)}{128\la^{2}}+O(\la^{-5/2}),
\end{equation}
where 
\begin{equation}\label{coefofFlambda}
\left\{\begin{array}{lll}
f_0(\la)&=&\cosh\left(\frac{3\la}{2}\right)-\cosh\left(\frac{\la}{2}\right)\cos\left(\frac{c}{2}\right),\\[0.1in]
f_1(\la)&=&\sinh\left(\frac{3\la}{2}\right)+\sinh\left(\frac{\la}{2}\right)\cos\left(\frac{c}{2}\right),\\ [0.1in]
f_2(\la)&=&c^2\sinh\left(\frac{3\la}{2}\right)-4\cosh\left(\frac{3\la}{2}\right)+4\left(\cosh\left(\frac{\la}{2}\right)\cos\left(\frac{c}{2}\right)+c\sinh\left(\frac{\la}{2}\right)\sin\left(\frac{c}{2}\right)\right),\\[0.1in]
f_3(\la)&=&-8\sinh\left(\frac{3\la}{2}\right)+c^2\cosh\left(\frac{3\la}{2}\right)-12c\cosh\left(\frac{\la}{2}\right)\sin\left(\frac{c}{2}\right)-8\sinh\left(\frac{\la}{2}\right)\cos\left(\frac{c}{2}\right),\\[0.1in]
f_4(\la)&=&-40c^2\sinh\left(\frac{3\la}{2}\right)+(c^4+72c^2+48)\cosh\left(\frac{3\la}{2}\right)+32c\left(c\ \cos\left(\frac{c}{2}\right)+7\sin\left(\frac{c}{2}\right)\right)\sinh\left(\frac{\la}{2}\right)\\[0.1in]
&&-\left(8c^2+8c^3\sin\left(\frac{c}{2}\right)+16(4c^2+3)\cosh\left(\frac{c}{2}\right)\right)\cosh\left(\frac{\la}{2}\right).
\end{array}
\right.
\end{equation}
}
\end{lemma}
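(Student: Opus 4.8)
The plan is to read the function $F(\la)$ of \eqref{Flambda} directly off the closed form $\det(M_1)=F_1+F_2e^{-s_2}$ in \eqref{DetM1}, by computing the large-$\la$ asymptotics of the two blocks $F_1$ and $F_2$ inside the strip $S$ and then normalizing.

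First I would discard the term $F_2e^{-s_2}$. By the previous lemma the relevant eigenvalues satisfy $-\alpha\le \Re(\la)<0$ while $|\Im(\la)|\to+\infty$, so $\arg(\la)\to\pm\tfrac{\pi}{2}$ and, from \eqref{s1s2}, $s_2=\sqrt{\la}(1+o(1))$ has $\Re(s_2)\sim \sqrt{|\la|}\cos(\tfrac{\pi}{4})\to+\infty$. Hence $|e^{-s_2}|$ decays faster than any power of $\la$. On the other hand, since $\Re(\la)$ is bounded, all the hyperbolic factors $\cosh(\tfrac{3\la}{2}),\sinh(\tfrac{3\la}{2}),\cosh(\tfrac{\la}{2}),\sinh(\tfrac{\la}{2})$ stay bounded in $S$, and the polynomial prefactors in $F_2$ grow only algebraically; therefore $F_2e^{-s_2}$ is exponentially small and, after the normalization below, is swallowed by the remainder $O(\la^{-5/2})$. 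Thus the entire asymptotic content of $\det(M_1)=0$ is carried by $F_1$.

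Next I would substitute the asymptotic expansions of $r_1,r_2,s_1,s_2$. From \eqref{r1r2} one gets $r_{1,2}=\la\pm\tfrac{ic}{2}+\tfrac{c^2}{8\la}+O(\la^{-2})$, and expanding the nested radicals in \eqref{s1s2} gives $s_1=\la+O(\la^{-2})$ and $s_2=\sqrt{\la}+O(\la^{-1/2})$, each pushed to the order needed for $\la^{-2}$ accuracy. The decisive manipulation is to separate the fast part from the slow corrections by the addition formula, e.g.
\[
\cosh\!\left(\tfrac{r_1}{2}\right)=\cosh\!\left(\tfrac{\la}{2}\right)\cosh\!\left(\tfrac{ic}{4}+\tfrac{c^2}{16\la}+\cdots\right)+\sinh\!\left(\tfrac{\la}{2}\right)\sinh\!\left(\tfrac{ic}{4}+\tfrac{c^2}{16\la}+\cdots\right),
\]
and likewise for $\sinh(\tfrac{r_1}{2})$, for the $r_2$ factors, and for the $s_1\approx\la$ factors; the small arguments are then Taylor expanded, using $\cosh(\tfrac{ic}{4})=\cos(\tfrac{c}{4})$ and $\sinh(\tfrac{ic}{4})=i\sin(\tfrac{c}{4})$. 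Forming the triple products and applying the product-to-sum identities converts every summand of $F_1$ into a combination of $\cosh(\tfrac{3\la}{2}),\sinh(\tfrac{3\la}{2}),\cosh(\tfrac{\la}{2}),\sinh(\tfrac{\la}{2})$ whose coefficients are trigonometric polynomials in $c$ times powers of $\la^{-1/2}$; in particular the factor $\cos(\tfrac{c}{2})$ visible in $f_0,\dots,f_4$ originates from $\cosh(\tfrac{r_1-r_2}{2})=\cosh(\tfrac{ic}{2})=\cos(\tfrac{c}{2})$.

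Finally I would factor out of $F_1$ its dominant algebraic coefficient, namely a constant multiple of $c\,\la^{11/2}$ produced by the prefactors $s_1s_2(r_1^2-r_2^2)(s_1^2-s_2^2)(\la+1)$ and their analogues, and collect the remaining expansion by increasing powers of $\la^{-1/2}$. Reading off the successive coefficients yields $f_0,f_1,f_2,f_3,f_4$ exactly as in \eqref{coefofFlambda}, the leading one assembling to $f_0(\la)=\cosh(\tfrac{3\la}{2})-\cosh(\tfrac{\la}{2})\cos(\tfrac{c}{2})$. The main obstacle is entirely computational: all six summands of $F_1$ must be expanded simultaneously to relative order $\la^{-2}$, which forces one to carry $r_1,r_2,s_1,s_2$ and the polynomial coefficients $(\la^2-s_j^2)\la+r_i^2-s_j^2$ to several orders, and the delicate point is to control the numerous cancellations between the six summands at each order in $\la^{-1/2}$, so that the hyperbolic combinations collapse to the comparatively simple closed forms stated for $f_0,\dots,f_4$ (for instance the $\sinh(\tfrac{3\la}{2})$ and $\sinh(\tfrac{\la}{2})$ parts must cancel at order $\la^{0}$, leaving $f_0$ purely in $\cosh$).
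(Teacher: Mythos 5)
Your proposal is correct and follows essentially the same route as the paper: the authors likewise note that $\Re(\sqrt{\la})\to+\infty$ forces $F_2e^{-s_2}=o(\la^{-5/2})$ after normalization, expand $r_1,r_2,s_1,s_2$ to the needed orders, factor out $-ic\la^{11/2}$ from $F_1$, and use product-to-sum identities (via $\cosh(\tfrac{r_1\pm r_2}{2})$, $\sinh(\tfrac{r_1\pm r_2}{2})$, with $\cosh(\tfrac{r_1-r_2}{2})\to\cos(\tfrac{c}{2})$) to collapse everything onto $\cosh(\tfrac{3\la}{2})$, $\sinh(\tfrac{3\la}{2})$, $\cosh(\tfrac{\la}{2})$, $\sinh(\tfrac{\la}{2})$. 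The only cosmetic difference is that you expand each factor $\cosh(\tfrac{r_i}{2})$, $\sinh(\tfrac{r_i}{2})$ individually about $\tfrac{\la}{2}$ before multiplying, whereas the paper first groups the pairwise products into sum/difference arguments; both are the same bookkeeping and lead to the stated $f_0,\dots,f_4$.
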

%%%%%%%%%%%%%%%%%%%%% Proof of Lemma \label{Finit-of-Determinant}
\begin{proof}
Let $\la$ be a large eigenvalue of $\AA$, then $\la$ is root of $det(M_1)$. In this Lemma, we give an asymptotic development of the function $det(M_1)$ for large $\la$. First, using the asymptotic expansion in \eqref{r1r2}-\eqref{s1s2}, we get 
\begin{equation}\label{finiter1r2s1s2}
\left\{\begin{array}{l}
r_1=\la+\frac{ic}{2}+\frac{c^2}{8\la}-\frac{ic^3}{16\la^2}+O(\la^{-3}),\ r_2=\la-\frac{ic}{2}+\frac{c^2}{8\la}+\frac{ic^3}{16\la^2}+O(\la^{-3}),\\[0.1in]
s_1=\la-\frac{c^2}{2\la}+O(\la^{-5}),\ s_2=\sqrt{\la}-\frac{1}{2\sqrt{\la}}+\frac{4c^2+3}{8\la^{\frac{3}{2}}}+O\left(\la^{-3/2}\right).
\end{array}\right.
\end{equation}
From \eqref{finiter1r2s1s2}, we get 
\begin{equation}\label{1FiniteF1}
\left\{\begin{array}{l}
2ic\la s_1s_2(s_1^2-s_2^2)(\la+1)=ic\la^{11/2}\left(2-\frac{1}{\la}+\frac{3+4c^2}{4\la^2}+O(\la^{-3})\right),\\[0.1in]
r_1s_2(r_2^2-s_1^2)\left((\la^2-s_2^2)\la+r_1^2-s_2^2\right)=-ic\la^{11/2}\left(1-\frac{1-i\,c}{2\la}+\frac{5c^2+3+14i\,c}{8\la^2}+O(\la^{-3})\right),\\[0.1in]
r_2s_2(r_1^2-s_1^2)\left((\la^2-s_2^2)\la+r_2^2-s_2^2\right)=ic\la^{11/2}\left(1-\frac{1+i\,c}{2\la}+\frac{5c^2+3-14i\,c}{8\la^2}+O(\la^{-3})\right),\\[0.1in]
2i\,c\la r_1r_2\left(s_1^2-s_2^2\right)=ic\la^{11/2}\left(\frac{2}{\sqrt{\la}}-\frac{2}{\la^{3/2}}+O(\la^{-5/2})\right),\\[0.1in]
r_2s_1(r_1^2-s_2^2)((\la^2-s_1^2)\la+r_2^2-s_1^2)=-ic\la^{11/2}\left(\frac{1}{\sqrt{\la}}-\frac{2-3ic}{2\la^{3/2}}+O\left(\la^{-5/2}\right)\right),\\[0.1in]
r_1s_1(r_2^2-s_2^2)((\la^2-s_1^2)\la+r_1^2-s_1^2)=ic\la^{11/2}\left(\frac{1}{\sqrt{\la}}-\frac{2+3ic}{2\la^{3/2}}+O(\la^{-5/2})\right).
\end{array}
\right.
\end{equation}
From equation \eqref{1FiniteF1} and using the fact that $\Re(\la)$ is bounded, we get 
{\small{\begin{equation}\label{F1New}
\begin{array}{llr}
\displaystyle{\frac{F_1}{ic\la^{11/2}}}
=-\bigg[\displaystyle{\left(2-\frac{1}{\lambda}+\frac{4c^2+3}{4\lambda^2}\right)
\sinh\left(\frac{r_1}{2} \right)
\sinh\left(\frac{r_2}{2} \right)
\cosh\left(\frac{s_1}{2} \right)
 }\\ \noalign{\medskip}
 
\displaystyle{\hspace{2.5cm}+\left(1-\frac{1}{2\lambda}+\frac{5 c^2+3}{8\lambda^2} \right)
\left(\cosh\left(\frac{r_1}{2}  \right)
\sinh\left(\frac{r_2}{2} \right)
+\sinh\left(\frac{r_1}{2}  \right)
\cosh\left(\frac{r_2}{2} \right)
\right)\sinh\left(\frac{s_1}{2}\right)
 }\\ \noalign{\medskip}

\displaystyle{\hspace{2.5cm}+\left(\frac{i\, c}{2\lambda}+\frac{7 i\, c}{4\lambda^2} \right)
\left(\cosh\left(\frac{r_1}{2}  \right)
\sinh\left(\frac{r_2}{2} \right)
-\sinh\left(\frac{r_1}{2}  \right)
\cosh\left(\frac{r_2}{2} \right)
\right)\sinh\left(\frac{s_1}{2}\right)}
\\ \noalign{\medskip}

\displaystyle{\hspace{2.5cm}+\left(\frac{2}{\sqrt{\lambda}}-\frac{2}{\lambda^{3/2}}\right)
\cosh\left(\frac{r_1}{2}  \right)
\cosh\left(\frac{r_2}{2} \right)
\sinh\left(\frac{s_1}{2} \right)
 }\\ \noalign{\medskip}
 
\displaystyle{\hspace{2.5cm}+\left(\frac{1}{\sqrt{\lambda}}-\frac{1}{\lambda^{3/2}}\right)
\left(\sinh\left(\frac{r_1}{2}  \right)
\cosh\left(\frac{r_2}{2} \right)
+\cosh\left(\frac{r_1}{2}  \right)
\sinh\left(\frac{r_2}{2} \right)
 \right)\cosh\left(\frac{s_1}{2} \right)}
\\ \noalign{\medskip}

\displaystyle{\hspace{2.5cm}+\left(\frac{3i\, c}{2\lambda^{3/2}} \right)
\left(\sinh\left(\frac{r_1}{2}  \right)
\cosh\left(\frac{r_2}{2} \right)
-\cosh\left(\frac{r_1}{2}  \right)
\sinh\left(\frac{r_2}{2} \right)
 \right)\cosh\left(\frac{s_1}{2} \right)+O\left(\lambda^{-5/2}\right)\bigg]}.

\end{array}
\end{equation}
}}
From equation \eqref{1FiniteF1} and using the fact that $\Re(\la)$ is bounded, we get
\begin{equation}\label{FiniteF2}
\begin{array}{lll}
F_2

=-i\, c\, \lambda^{11/2}\bigg[\displaystyle{2

\sinh\left(\frac{r_1}{2} \right)
\sinh\left(\frac{r_2}{2} \right)
\cosh\left(\frac{s_1}{2} \right)
 }\\ \noalign{\medskip}

\displaystyle{\hspace{2.5cm}+
\left(\cosh\left(\frac{r_1}{2}  \right)
\sinh\left(\frac{r_2}{2} \right)
+\sinh\left(\frac{r_1}{2}  \right)
\cosh\left(\frac{r_2}{2} \right)
\right)\sinh\left(\frac{s_1}{2}\right)+O\left(\lambda^{-1/2}\right)\bigg]}.
\end{array}
\end{equation}
Since the real part of $\sqrt{\la}$ is positive, then

%On the other hand, from \eqref{finiter1r2s1s2}, we get 
$$
\lim_{\abs{\la}\to \infty}\la^{-5/2}e^{-\sqrt{\la}}=0,
$$
hence
\begin{equation}\label{finiteexp-s2}
e^{-\sqrt{\la}}=o(\la^{-5/2}),
\end{equation}
then, 
\begin{equation}\label{F2e-s2}
F_2e^{-s_2}=-ic\la^{11/2}\left(o(\la^{-5/2})\right).
\end{equation}
Inserting \eqref{F1New} and \eqref{F2e-s2}, in \eqref{DetM1}, we get 
\begin{equation*}
{\rm det(M_1)}=-ic\,\la^{11/2}F(\la),
\end{equation*}
where, 
\begin{equation}\label{NewF}
\begin{array}{ll}
F(\lambda)

=\displaystyle{\left(1-\frac{1}{2\lambda}+\frac{4c^2+3}{8\lambda^2}\right)

\left(\cosh\left(\frac{r_1+r_2}{2} \right)
-\cosh\left(\frac{r_1-r_2}{2} \right)\right)
\cosh\left(\frac{s_1}{2} \right)
 }\\ \noalign{\medskip}

\displaystyle{+\left(1-\frac{1}{2\lambda}+\frac{5 c^2+3}{8\lambda^2} \right)
\sinh\left(\frac{r_1+r_2}{2}  \right)
\sinh\left(\frac{s_1}{2}\right)
-\left(\frac{i\, c}{2\lambda}+\frac{7 i\, c}{4\lambda^2}\right)
\sinh\left(\frac{r_1-r_2}{2}  \right)
\sinh\left(\frac{s_1}{2}\right)}
\\ \noalign{\medskip}
\displaystyle{+\left(\frac{1}{\sqrt{\lambda}}-\frac{1}{\lambda^{3/2}} \right)
\left(\cosh\left(\frac{r_1+r_2}{2} \right)
+\cosh\left(\frac{r_1-r_2}{2} \right)\right)
\sinh\left(\frac{s_1}{2} \right)
 }\\ \noalign{\medskip}

\displaystyle{+\left(\frac{1}{\sqrt{\lambda}}-\frac{1}{\lambda^{3/2}}\right)
\sinh\left(\frac{r_1+r_2}{2}  \right)\cosh\left(\frac{s_1}{2} \right)+\left(\frac{3i\, c}{2\lambda^{3/2}} \right)
\sinh\left(\frac{r_1-r_2}{2}  \right)\cosh\left(\frac{s_1}{2} \right)+O\left(\lambda^{-5/2}\right)}.
\end{array}
\end{equation}
Therefore, system \eqref{NE-Trans5}-\eqref{NE-CC-4} admits a non trivial solution if and only if ${\rm det(M_1)=0}$, if and only if the eigenvalues of $\AA$ are roots of the function $F$. Next, from \eqref{finiter1r2s1s2} and the fact that real $\la$ is bounded, we get
\begin{equation}\label{finitechsh}
\left\{\begin{array}{l}
\cosh\left(\frac{r_1+r_2}{2}\right)=\cosh(\la)+\frac{c^2\sinh(\la)}{8\,\la}+\frac{c^4\,\cosh(\la)}{128\la^2}+O(\la^{-3}),\\[0.1in]
\cosh\left(\frac{r_1-r_2}{2}\right)=\cos\left(\frac{c}{2}\right)+\frac{c^3\sin\left(\frac{c}{2}\right)}{16\la^2}+O(\la^{-3}),\\[0.1in]
\sinh\left(\frac{r_1+r_2}{2}\right)=\sinh(\la)+\frac{c^2\,\cosh(\la)}{8\la}+\frac{c^4\,\sinh(\la)}{128\la^2}+O(\la^{-3}),\\[0.1in]
\sinh\left(\frac{r_1-r_2}{2}\right)=i\sin\left(\frac{c}{2}\right)-i\frac{c^3\cos\left(\frac{c}{2}\right)}{16\la^2}+O(\la^{-3}),\\[0.1in]
\sinh\left(\frac{s_1}{2}\right)=\sinh(\frac{\la}{2})-\frac{c^2\cosh\left(\frac{\la}{2}\right)}{4\la^2}+O(\la^{-4}),\\[0.1in]
\cosh(\frac{s_1}{2})=\cosh\left(\frac{\la}{2}\right)-\frac{c^2\sinh\left(\frac{\la}{2}\right)}{4\la^2}+O(\la^{-4}).
\end{array}\right.
\end{equation}
Inserting \eqref{finitechsh} in \eqref{NewF}, we get \eqref{Flambda}.
\end{proof}
%%%%%%%%%%%%%%%%%%%%%%%%%%%%%%%%%%%%%%%%%%%%%
%%%%%%%%%%%%%%%%%%%%%%%%%%%%%%%%%%%%%%%%%%%%%
%%%%%%%%%%%%%%%%%%%%%%%%%%%%%%%%%%%%%%%%%%%%%
\begin{lemma}\label{lemma-eigen}
Under condition \eqref{Cond-local}, there exists $n_0\in \mathbb{N}$ sufficiently large and two sequences $\left(\la_{1,n}\right)_{\abs{n}\geq n_0}$ and $\left(\la_{2,n}\right)_{\abs{n}\geq n_0}$ of simple roots of $F$ satisfying the following asymptotic behavior 
\begin{equation}\label{first-asymptoticbehavior}
\la_{1,n}=2in\pi+i\pi+\epsilon_{1,n}\quad \text{where}\quad \lim_{\abs{n}\to+\infty}\epsilon_{1,n}=0
\end{equation}
and 
\begin{equation}\label{second-asymptoticbehavior}
\la_{2,n}=2n\pi i+i\arccos\left(\cos^2\left(\frac{c}{4}\right)\right)+\epsilon_{2,n}\quad \text{where}\quad \lim_{\abs{n}\to+\infty}\epsilon_{2,n}=0.
\end{equation}
\end{lemma}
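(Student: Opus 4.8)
The plan is to locate the zeros of the dominant term $f_0$ in \eqref{Flambda} explicitly, and then to transfer them to genuine zeros of the full function $F$ by a Rouch\'e-type perturbation argument, crucially exploiting that $\Re(\la)$ stays bounded on the strip $S$ (established in the preceding lemma). First I would factor $f_0$: writing the argument as $\la/2$ and using $\cosh(3z)=4\cosh^3 z-3\cosh z$, the definition in \eqref{coefofFlambda} gives
\begin{equation*}
f_0(\la)=\cosh\!\left(\frac{\la}{2}\right)\left[\,4\cosh^2\!\left(\frac{\la}{2}\right)-3-\cos\!\left(\frac{c}{2}\right)\right].
\end{equation*}
The first factor vanishes exactly at $\la=2in\pi+i\pi$, the leading term of \eqref{first-asymptoticbehavior}. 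For the second factor, the identities $3+\cos(c/2)=2(1+\cos^2(c/4))$ and $\cosh(\la/2)=(-1)^n\cos(\theta/2)$ for $\la=2in\pi+i\theta$ show that its zeros are $\la=2in\pi+i\arccos(\cos^2(c/4))$, the leading term of \eqref{second-asymptoticbehavior}. I denote these two families of simple zeros of $f_0$ by $\mu_{1,n}$ and $\mu_{2,n}$.

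Next I would run Rouch\'e's theorem on small circles around each $\mu_{j,n}$. Because $\Re(\la)$ is bounded on $S$, every $\cosh$ and $\sinh$ occurring in $f_0,\dots,f_4$ is bounded, so each correction $f_k(\la)\la^{-k/2}$ in \eqref{Flambda} is $O(|\la|^{-1/2})$; together with the holomorphic remainder, this gives $F(\la)-f_0(\la)=O(|\la|^{-1/2})=O(|n|^{-1/2})$ uniformly on the strip, and $F$ is holomorphic there (away from the branch cut of $\sqrt{\la}$). Fixing a small radius $\rho>0$ and using that $|f_0|$ is invariant under $\la\mapsto\la+2i\pi$, I obtain a lower bound $|f_0(\la)|\ge\delta(\rho)>0$ on the circles $|\la-\mu_{j,n}|=\rho$, uniform in $n$, since the base zeros $\mu_{j,0}$ are simple and the two families are uniformly separated when $\sin(c/4)\neq0$. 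For $|n|$ large enough that $O(|n|^{-1/2})<\delta(\rho)$, Rouch\'e yields exactly one zero $\la_{j,n}$ of $F$ inside each circle; shrinking $\rho$ as $|n|\to\infty$ gives $\la_{j,n}=\mu_{j,n}+\epsilon_{j,n}$ with $\epsilon_{j,n}\to0$, which is \eqref{first-asymptoticbehavior}--\eqref{second-asymptoticbehavior}. Simplicity then follows because the correction terms and their $\la$-derivatives are both $O(|\la|^{-1/2})$ (Cauchy estimates for the remainder), so $F'(\la_{j,n})\to f_0'(\mu_{j,n})\neq0$.

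The main obstacle I expect is the degenerate case $\sin(c/4)=0$. Then $\cos^2(c/4)=1$, the second family $\mu_{2,n}$ collapses onto the lattice point $2in\pi$, and the factorization degenerates to $f_0(\la)=4\cosh(\la/2)\sinh^2(\la/2)$, so that $\mu_{2,n}=2in\pi$ is a \emph{double} zero of $f_0$ rather than a simple one. There the naive Rouch\'e bound isolates two zeros (with multiplicity) but does not by itself exhibit a simple root, and one must instead use the subleading coefficients $f_2,f_3,f_4$ to see how the double zero splits; this is precisely the mechanism that produces the finer expansions \eqref{2eigenvalue1}--\eqref{2eigenvalue2} and forces the case distinction of Proposition \ref{ev}. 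I would handle it by applying Rouch\'e on a circle whose radius is comparable to $|n|^{-1/2}$, so that the term $f_2(\la)\la^{-1}$ enters the dominant balance, and then reading off simplicity directly from the resulting two-term equation.
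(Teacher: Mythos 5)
Your overall strategy coincides with the paper's: you factor $f_0$ (your factorization $f_0(\la)=\cosh(\la/2)\bigl[4\cosh^2(\la/2)-3-\cos(c/2)\bigr]$ is the same as the paper's $f_0(\la)=2\cosh(\la/2)\bigl(\cosh(\la)-\cos^2(c/4)\bigr)$ via $1+\cos(c/2)=2\cos^2(c/4)$), identify the two families $\mu_{1,n}$, $\mu_{2,n}$ of zeros on the imaginary axis, observe that boundedness of $\Re(\la)$ gives $F(\la)-f_0(\la)=O(|\la|^{-1/2})$ uniformly, and conclude by Rouch\'e on small circles. For $\sin(c/4)\neq 0$ your argument is sound and matches the paper's, the only cosmetic difference being that the paper works directly with shrinking radii $r_n=|n|^{-1/4}$ and the explicit lower bound $|f_0|\geq C r_n$ on $\partial B_n$, while you use a fixed radius $\rho$ plus periodicity and then let $\rho\to 0$; both yield $\epsilon_{j,n}\to 0$.

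The genuine gap is in your treatment of the degenerate case $\sin(c/4)=0$. You correctly diagnose that $\mu_{2,n}=2n\pi i$ becomes a double zero of $f_0$, but your proposed remedy --- Rouch\'e on a circle of radius comparable to $|n|^{-1/2}$ --- does not work as stated: on such a circle $|f_0|\sim C r^2\sim C|n|^{-1}$, which is \emph{dominated} by the perturbation $|F-f_0|=O(|n|^{-1/2})$, so the inequality $|F-f_0|<|f_0|$ fails and Rouch\'e against $f_0$ gives nothing. To make a refined comparison against $f_0+f_1\la^{-1/2}+\tfrac18 f_2\la^{-1}$ one would first have to check how these corrections behave near the double zero (in fact $f_1$ and $f_2$ also vanish at $2n\pi i$ when $\sin(c/2)=0$, which is why $\epsilon_{2,n}=O(n^{-1})$ in the end), and you do not carry this out. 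The paper resolves the issue in the opposite direction: it \emph{enlarges} the radius to $r_n=|n|^{-1/8}$, so that $|f_0|\geq C r_n^2=C|n|^{-1/4}$ still dominates $O(|n|^{-1/2})$, and Rouch\'e against $f_0$ alone localizes the roots of $F$ within $|n|^{-1/8}$ of $2n\pi i$; that already gives $\epsilon_{2,n}\to 0$, which is all this lemma claims, and the finer splitting is deferred to the computation of $\epsilon_{2,n}$ in the proof of Proposition \ref{ev}. You should replace your $|n|^{-1/2}$ radius by any $r_n$ with $|n|^{-1/4}\ll r_n\ll 1$ (e.g.\ $|n|^{-1/8}$) and drop the claim that this step alone exhibits simplicity in the degenerate case.
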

%%%%%%%%%%%%%%%%%%%%%%%%%%%%%%%%%%%%%%%%%%%%% 
\begin{proof}
First, we look at the roots of $f_0$. From \eqref{coefofFlambda}, we deduce that $f_0$ can be written as 
\begin{equation}\label{Newf0}
f_0(\la)=2\cosh\left(\frac{\la}{2}\right)\left(\cosh(\la)-\cos^2\left(\frac{c}{4}\right)\right).
\end{equation}
Then, the roots of $f_0$ are given by 
$$
\left\{
\begin{array}{lr}
\mu_{1,n}=2n\pi i+i\pi,&n\in \mathbb{Z},\\
\mu_{2,n}=2n\pi i+i\arccos\left(\cos^2\left(\frac{c}{4}\right)\right),&n\in \mathbb{Z}.
\end{array}\right.
$$
Now, with the help of Rouch\'e's Theorem, we will show that the roots of $F$ are close to $f_0$. Let us start with the first family $\mu_{1,n}$. Let $B_n=B\left((2n+1)\pi i,r_n\right)$ be the ball of centrum $(2n+1)\pi i$ and radius $r_n=\abs{n}^{-\frac{1}{4}}$ and $\la\in \partial\, B_n$; i.e. $\la_n=2n\pi i+i\pi+r_ne^{i\theta}$,\, $\theta\in [0,2\pi[$. Then 
\begin{equation}\label{coshsinh}
\cosh\left(\frac{\la}{2}\right)=\frac{i(-1)^nr_ne^{i\theta}}{2}+O(r_n^2),\quad \text{and}\quad \cosh(\la)=-1+O(r_n^2).
\end{equation}
Inserting \eqref{coshsinh} in \eqref{Newf0}, we get 
$$
f_0(\la)=-i(-1)^nr_ne^{i\theta}\left(1+\cos^2\left(\frac{c}{4}\right)+O(r_n^3)\right).
$$
It follows that there exists a positive constant $C$ such that 
$$
\forall\ \la\in \partial\, B_n,\quad \abs{f_0(\la)}\geq C\,r_n=C\abs{n}^{-\frac{1}{4}}.
$$
On the other hand, from \eqref{Flambda}, we deduce that 
$$
\abs{F(\la)-f_0(\la)}=O\left(\frac{1}{\sqrt{\la}}\right)=O\left(\frac{1}{\sqrt{\abs{n}}}\right).
$$
It follows that, for $\abs{n}$ large enough 
$$
\forall \la\in \partial\, B_n,\quad \abs{F(\la)-f_0(\la)}<\,\abs{f_0(\la)}.
$$
Hence, with the help of Rouch\'e's theorem, there exists $n_0\in \mathbb{N}^{\ast}$ large enough, such that $\forall\, \abs{n}\geq n_0$, the first branch of roots of $F$ denoted by $\la_{1,n}$ are close to $\mu_{1,n}$, that is 
\begin{equation}\label{approach1}
\la_{1,n}=\mu_{1,n}+i\pi+\epsilon_{1,n}\quad \text{where}\quad \lim_{\abs{n}\to+\infty}\epsilon_{1,n}=0.
\end{equation}
Passing to the second family $\mu_{2,n}$. Let $\tilde{B}_n=B\left(\mu_{2,n},r_n\right)$ be the ball of centrum $\mu_{2,n}$ and radius 
$$
r_n:=\left\{\begin{array}{lll}
\frac{1}{\abs{n}^{\frac{1}{8}}}&\text{if}&\sin\left(\frac{c}{4}\right)=0,\\[0.1in]
\frac{1}{\abs{n}^{\frac{1}{4}}}&\text{if}&\sin\left(\frac{c}{4}\right)\neq 0,
\end{array}\right.
$$
such that $\la\in \partial\, \tilde{B}_n$; i.e. $\la_n=\mu_{2,n}+r_ne^{i\theta}$,\, $\theta\in [0,2\pi[$. Then, 
\begin{equation*}
\cosh(\la)-\cos^2\left(\frac{c}{4}\right)=\cosh\left(2n\pi i+i\, \arccos\left(\cos^2\left(\frac{c}{4}\right)+r_ne^{i\theta}\right)\right)-\cos^2\left(\frac{c}{4}\right).
\end{equation*}
It follow that, 
\begin{equation}\label{f0case2}
\cosh(\la)-\cos^2\left(\frac{c}{4}\right)=i\, r_n\sqrt{1-\cos^4\left(\frac{c}{4}\right)}e^{i\theta}+\frac{r_n^2\cos^2\left(\frac{c}{4}\right)e^{2i\theta}}{2}+O(r_n^3),
\end{equation}
and 
\begin{equation}\label{1f0case2}
\cosh\left(\frac{\la}{2}\right)=(-1)^n\cos\left(\frac{\arccos\left(\cos^2\left(\frac{c}{4}\right)\right)}{2}\right)+\frac{ir_ne^{i\theta}(-1)^n}{2}\sin\left(\frac{\arccos\left(\cos^2\left(\frac{c}{4}\right)\right)}{2}\right)+O(r_n^2).
\end{equation}
Inserting \eqref{f0case2} and \eqref{1f0case2} in \eqref{Newf0}, we get 
\begin{equation}\label{1Newf0case2}
f_0(\la)=R_1\,e^{i\theta}r_n+R_2\,e^{2i\theta}r_n^2+O(r_n^3),
\end{equation}
where 
\begin{equation}\label{R1R2}
\left\{\begin{array}{l}
R_1=i\,(-1)^n\sqrt{1-\cos^4\left(\frac{c}{4}\right)}\cos\left(\frac{\arccos\left(\cos^2\left(\frac{c}{4}\right)\right)}{2}\right),\\[0.1in]
R_2=-(-1)^n\sqrt{1-\cos^4\left(\frac{c}{4}\right)}\sin\left(\frac{\arccos\left(\cos^2\left(\frac{c}{4}\right)\right)}{2}\right)+(-1)^n\cos^2\left(\frac{c}{4}\right)\cos\left(\frac{\arccos(\cos^2\left(\frac{c}{4}\right))}{2}\right).
\end{array}\right.
\end{equation}
We distinguish two cases:\\
\textbf{Case 1.} If $\sin\left(\frac{c}{4}\right)=0$, then 
$$
R_1=0\quad \text{and}\quad R_2=(-1)^n\neq 0.
$$
It follows that there exists a positive constant $C$ such that 
$$
\forall \la\in \partial\, \tilde{B}_n,\quad \abs{f_0(\la)}\geq C\, r_n^2=C\abs{n}^{-\frac{1}{4}}. 
$$
\textbf{Case 2.} If $\sin\left(\frac{c}{4}\right)\neq 0$, then $R_1\neq 0$. It follows that, there exists a positive constant $C$ such that 
$$
\forall \la\in \partial\, \tilde{B}_n,\quad \abs{f_0(\la)}\geq C\, r_n=C\abs{n}^{-\frac{1}{4}}. 
$$
On the other hand, from \eqref{Flambda}, we deduce that 
$$
\abs{F(\la)-f_0(\la)}=O\left(\frac{1}{\sqrt{\la}}\right)=O\left(\frac{1}{\sqrt{\abs{n}}}\right).
$$
In both cases, for $\abs{n}$ large enough, we have 
$$
\forall\, \la\in \partial \tilde{B}_n,\quad \abs{F(\la)-f_0(\la)}<\abs{f_0(\la)}.
$$
Hence, with the help of Rouch\'e's Theorem, there exists $n_0\in \mathbb{N}^{\ast}$ large enough, such that $\forall \abs{n}\geq n_0$, the second branch of roots of $F$, denoted by $\la_{2,n}$ are close to $\mu_{2,n}$ that is defined in equation \eqref{second-asymptoticbehavior}. The proof is thus complete.
\end{proof}
%%%%%%%%%%%%%%%%%%%%%%%%%%%%%%%%%%%%%%%%%%%%%

\noindent We are now in position to conclude the proof of Proposition \ref{ev}.\\
\textbf{Proof of Proposition \ref{ev}.} The proof is divided into two steps.\\

\noindent \textbf{Calculation of $\epsilon_{1,n}$}. From \eqref{approach1},  we have
\begin{equation}\label{coefla1}
\left\{
\begin{array}{ll}
\displaystyle{\cosh\left(\frac{3\lambda_{1,n}}{2}\right)=-i\,(-1)^n\, \sinh\left(\frac{3\epsilon_{1,n}}{2}\right),\ \sinh\left(\frac{3\lambda_{1,n}}{2}\right)=-i\,(-1)^n\, \cosh\left(\frac{3\epsilon_{1,n}}{2}\right) ,}

\\ \noalign{\medskip}

\displaystyle{\cosh\left(\frac{\lambda_{1,n}}{2}\right)=i\,(-1)^n\, \sinh\left(\frac{\epsilon_{1,n}}{2}\right),\ \sinh\left(\frac{\lambda_{1,n}}{2}\right)=i\,(-1)^n\, \cosh\left(\frac{\epsilon_{1,n}}{2}\right) ,}

\\ \noalign{\medskip}

\displaystyle{\frac{1}{\lambda_{1,n}}= -\frac{i}{2\pi n}+\frac{i}{4\pi n^2}+O\left(\epsilon_{1,n}\, n^{-2}\right)+O\left(n^{-3}\right),\ \frac{1}{\lambda^2_{1,n}}= -\frac{1}{4\pi^2 n^2}+O\left(n^{-3}\right)}

 \\ \noalign{\medskip}
\displaystyle{\frac{1}{\sqrt{\lambda_{1,n}}}=\frac{1-i\sign(n)}{2\sqrt{\pi |n|}}+\frac{i-\sign(n)}{8\sqrt{\pi |n|^3}} +O\left(\epsilon_{1,n}\, |n|^{-3/2}\right)+O\left(|n|^{-5/2}\right)},

 \\ \noalign{\medskip}

\displaystyle{\frac{1}{\sqrt{\lambda^3_{1,n}}}=\frac{-1-i\sign(n)}{4\sqrt{\pi^3 |n|^3}}+O\left(|n|^{-5/2}\right)\, ,\frac{1}{\sqrt{\lambda^5_{1,n}}}=O\left(|n|^{-5/2}\right)}.

\end{array}\right.
\end{equation}
On the other hand, since $\displaystyle{\lim_{|n|\to+\infty}\epsilon_{1,n}=0}$, we have the asymptotic expansion
\begin{equation}\label{52}
\left\{
\begin{array}{ll}
\displaystyle{\sinh\left(\frac{3\epsilon_{1,n}}{2}\right)=\frac{3\epsilon_{1,n}}{2}+O(\epsilon_{1,n}^3) },\ \displaystyle{\cosh\left(\frac{3\epsilon_{1,n}}{2}\right)=1+\frac{9\epsilon_{1,n}}{8}+O(\epsilon_{1,n}^4)},\\ \noalign{\medskip}
\displaystyle{\sinh\left(\frac{\epsilon_{1,n}}{2}\right)=\frac{\epsilon_{1,n}}{2}+O(\epsilon_{1,n}^3) },\ \displaystyle{\cosh\left(\frac{\epsilon_{1,n}}{2}\right)=1+\frac{\epsilon_{1,n}}{8}+O(\epsilon_{1,n}^4)}.
\end{array}
\right.
\end{equation}
Inserting \eqref{52} in \eqref{coefla1}, we get
\begin{equation}\label{53}
\left\{
\begin{array}{ll}
\displaystyle{\cosh\left(\frac{3\lambda_{1,n}}{2}\right)=-\frac{3 i\,(-1)^n \epsilon_{1,n}}{2}+O(\epsilon_{1,n}^3)  ,\ \sinh\left(\frac{3\lambda_{1,n}}{2}\right)=-i\,(-1)^n -  \frac{9i\,(-1)^n\,\epsilon_{1,n}}{8}+O(\epsilon_{1,n}^4) ,}

\\ \noalign{\medskip}

\displaystyle{\cosh\left(\frac{\lambda_{1,n}}{2}\right)= \frac{i\,(-1)^n\,\epsilon_{1,n}}{2}+O(\epsilon_{1,n}^3),\ \sinh\left(\frac{\lambda_{1,n}}{2}\right)=i\,(-1)^n +  \frac{i\,(-1)^n\,\epsilon_{1,n}}{8}+O(\epsilon_{1,n}^4) ,}

\\ \noalign{\medskip}

\displaystyle{\frac{1}{\lambda_{1,n}}= -\frac{i}{2\pi n}+\frac{i}{4\pi n^2}+O\left(\epsilon_{1,n}\, n^{-2}\right)+O\left(n^{-3}\right),\ \frac{1}{\lambda^2_{1,n}}= -\frac{1}{4\pi^2 n^2}+O\left(n^{-3}\right)}

 \\ \noalign{\medskip}
\displaystyle{\frac{1}{\sqrt{\lambda_{1,n}}}=\frac{1-i\sign(n)}{2\sqrt{\pi |n|}}+\frac{i-\sign(n)}{8\sqrt{\pi |n|^3}} +O\left(\epsilon_{1,n}\, |n|^{-3/2}\right)+O\left(|n|^{-5/2}\right)},

 \\ \noalign{\medskip}

\displaystyle{\frac{1}{\sqrt{\lambda^3_{1,n}}}=\frac{-1-i\sign(n)}{4\sqrt{\pi^3 |n|^3}}+O\left(|n|^{-5/2}\right)\, ,\frac{1}{\sqrt{\lambda^5_{1,n}}}=O\left(|n|^{-5/2}\right)}.

\end{array}\right.
\end{equation}
Inserting \eqref{53} in \eqref{Flambda}, we get
\begin{equation}\label{epsilon1}
\begin{array}{ll}

\displaystyle{\frac{\epsilon_{1,n }}{2}\left(3+\cos\left(\frac{c}{2}\right)\right)\left(1+\frac{i}{4\pi\, n}\right)+\frac{\left(1-i\sign(n)\right)\left(1-\cos\left(\frac{c}{2}\right)\right)}{2\sqrt{\pi\, |n|}}+\frac{i\, c\left(4\sin\left(\frac{c}{2}\right)-c\right)}{16\pi n}}

\\ \noalign{\medskip}

\hspace{1cm}\displaystyle{-\frac{\left(2+i\pi\right)\left(1+i\sign(n)\right)\left(1-\cos\left(\frac{c}{2}\right)\right)}{8\sqrt{\pi^3\, |n|^3}} +\frac{4c\left(7-2i\pi\right)\sin\left(\frac{c}{2}\right)+c^2\, (2i\pi+5+4 \cos\left(\frac{c}{2}\right))}{64\pi^2n^2} }
\\ \noalign{\medskip}

\hspace{1cm}\displaystyle{
+O\left(|n|^{-5/2}\right)+O\left(\epsilon_{1,n }\, |n|^{-3/2}\right)+O\left(\epsilon_{1,n }^2\, |n|^{-1/2}\right)+O\left(\epsilon_{1,n }^3\right)=0}.

\end{array}
\end{equation}
We distinguish two cases. \\
\textbf{Case 1.} If $\sin\left(\frac{c}{4}\right)\neq 0$, then $\displaystyle{1-\cos\left(\frac{c}{2}\right)=2\sin^2\left(\frac{c}{4}\right)\neq 0}$, then from \eqref{epsilon1}, we get 
$$
\frac{\epsilon_{1,n}}{2}\left(3+\cos\left(\frac{c}{2}\right)\right)+\frac{\sin^2\left(\frac{c}{4}(1-i\sign(n))\right)}{\sqrt{\abs{n}\pi}}+O(\epsilon_{1,n}^3)+O(\abs{n}^{-1/2}\epsilon_{1,n}^2)+O(n^{-1})=0,
$$
hence, we get
\begin{equation}\label{epsilon1,n}
\epsilon_{1,n}=-\frac{2\sin^2\left(\frac{c}{4}\right)(1-i\sign(n))}{\left(2+\cos\left(\frac{c}{2}\right)\right)}+O(n^{-1}).
\end{equation}
Inserting \eqref{epsilon1,n} in \eqref{approach1}, we get \eqref{2eigenvalue1}.\\
%%%%%%%%%%%
\textbf{Case 2.} If $\sin\left(\frac{c}{4}\right)=0$, 
$$
1-\cos\left(\frac{c}{2}\right)=2\sin^2\left(\frac{c}{4}\right)=0,\, \, \sin\left(\frac{c}{2}\right)=2\sin\left(\frac{c}{4}\right)\cos\left(\frac{c}{4}\right)=0,
$$ 
then, from \eqref{epsilon1}, we get 
\begin{equation}\label{newepsilon1}
\begin{split}
2\epsilon_{1,n}\left(1+\frac{i}{4\pi n}\right)-\frac{i\, c^2}{16\pi n}+\frac{c^2(2i\pi +9)}{64\pi^2n^2}+O\left(\abs{n}^{-5/2}\right)+O\left(\epsilon_{1,n}\abs{n}^{-3/2}\right)\\ +O\left(\epsilon_{1,n}^2\abs{n}^{-1/2}\right)+O\left(\epsilon_{1,n}^3\right)=0.
\end{split}
\end{equation}
By a straightforward calculation in equation \eqref{newepsilon1}, we get 
\begin{equation}\label{1newepsilon1}
\epsilon_{1,n}=\frac{i\, c^2}{32\pi n}-\frac{(4+i\, \pi)c^2}{64\pi^2n^2}+O\left(\abs{n}^{-5/2}\right).
\end{equation}
Inserting \eqref{1newepsilon1} in \eqref{approach1}, we get \eqref{1eigenvalue2}.\\
%%%%%%%%%%%%%%%%%%%%%%%%%%%%%%
\textbf{Calculation of $\epsilon_{2,n}$}. From \eqref{second-asymptoticbehavior}, we have 
\begin{equation}\label{newestimation1}
\frac{1}{\sqrt{\la_{2,n}}}=\frac{1-i\sign(n)}{2\sqrt{\abs{n}\pi}}+O\left(\abs{n}^{-3/2}\right)\quad \text{and}\quad \frac{1}{\la_{2,n}}=O(n^{-1}).
\end{equation}
Inserting \eqref{second-asymptoticbehavior} and \eqref{newestimation1} in 
\eqref{Flambda}, we get 
\begin{equation}\label{newestimation2}
\begin{array}{l}
\displaystyle{\cosh\left(\frac{\la_{2,n}}{2}\right)\left(\cosh(\la_{2,n})-\cos^2\left(\frac{c}{4}\right)\right)}\\[0.1in]
\displaystyle{+\frac{(1-i\sign(n))\left(\sinh\left(\frac{3\la_{2,n}}{2}\right)+\sinh\left(\frac{\la_{2,n}}{2}\right)\cos\left(\frac{c}{2}\right)\right)}{4\sqrt{\abs{n}\pi}}+O(n^{-1})=0}.
\end{array}
\end{equation}
On the other hand, we have 
\begin{equation}\label{newestimation3}
\begin{array}{lll}
\cosh(\la_{2,n})-\cos^2\left(\frac{c}{4}\right)&=&\cosh\left(2n\pi i+i\, \arccos\left(\cos^2\left(\frac{c}{4}\right)\right)+\epsilon_{2,n}\right)-\cos^2\left(\frac{c}{4}\right)\\[0.1in]
&=&\cos^2\left(\frac{c}{4}\right)\cosh(\epsilon_{2,n})+i\sqrt{1-\cos^4\left(\frac{c}{4}\right)}\sinh(\epsilon_{2,n})-\cos^2\left(\frac{c}{4}\right)\\[0.1in]
&=&i\,\epsilon_{2,n}\sqrt{1-\cos^4\left(\frac{c}{4}\right)}+O(\epsilon_{2,n}^2),
\end{array}
\end{equation}
and 
\begin{equation}\label{newestimation4}
\left\{\begin{array}{lll}
\cosh\left(\frac{\la_{2,n}}{2}\right)&=&(-1)^n\cos\left(\frac{\arccos\left(\cos^2\left(\frac{c}{4}\right)\right)}{2}\right)+O(\epsilon_{2,n}),\\[0.1in]
\sinh\left(\frac{\la_{2,n}}{2}\right)&=&i(-1)^n\sin\left(\frac{\arccos\left(\cos^2\left(\frac{c}{4}\right)\right)}{2}\right)+O(\epsilon_{2,n}),\\[0.1in]
\sinh\left(\frac{3\la_{2,n}}{2}\right)&=&i(-1)^n\sin\left(\frac{3\arccos\left(\cos^2\left(\frac{c}{4}\right)\right)}{2}\right)+O(\epsilon_{2,n}).
\end{array}\right.
\end{equation}
Inserting \eqref{newestimation3} and \eqref{newestimation4} in \eqref{newestimation2}, we get 
\begin{equation}\label{newestimation5}
\begin{array}{l}
\epsilon_{2,n}\cos\left(\frac{\arccos\left(\cos^2\left(\frac{c}{4}\right)\right)}{2}\right)\sqrt{1-\cos^4\left(\frac{c}{4}\right)}+O\left(\frac{\epsilon_{2,n}}{n}\right)+O\left(\frac{1}{n}\right)\\[0.1in]
\displaystyle{+\frac{(1-i\,\sign(n))\left(\sin\left(3\frac{\arccos\left(\cos^2\left(\frac{c}{4}\right)\right)}{2}\right)+\cos\left(\frac{c}{2}\right)\sin\left(\frac{\arccos\left(\cos^2\left(\frac{c}{4}\right)\right)}{2}\right)\right)}{4\sqrt{\abs{n}\pi}}=0}. 
\end{array}
\end{equation}
We distinguish two cases. \\
\textbf{Case 1.}  If $\sin\left(\frac{c}{4}\right)\neq 0$, then from \eqref{newestimation5}, we get 
\begin{equation}\label{newestimation6}
\epsilon_{2,n}=-\frac{\left(\cos(\frac{c}{2})\sin\left(\frac{\arccos\left(\cos^2(\frac{c}{4})\right)}{2}\right)+\sin\left(\frac{3\arccos\left(\cos^2(\frac{c}{4})\right)}{2}\right)\right)(1-i\, \sign(n))}{4\sqrt{1-cos^4\left(\frac{c}{4}\right)}\cos\left(\frac{\arccos\left(\cos^2(\frac{c}{4})\right)}{2}\right)\sqrt{\pi \abs{n}}}+O(n^{-1}).
\end{equation}
Inserting \eqref{newestimation6} in \eqref{second-asymptoticbehavior}, we get \eqref{1eigenvalue2}.\\
\textbf{Case 2.} If $\sin\left(\frac{c}{4}\right)=0$, we get 
\begin{equation}\label{newestimation7}
\epsilon_{2,n}=O(n^{-1}).
\end{equation}
Inserting \eqref{newestimation7} in \eqref{second-asymptoticbehavior}, we get \eqref{2eigenvalue2}. Thus, the proof is complete.
%%%%%%%%%%%%%%%%

\noindent \textbf{Proof of Theorem \ref{Thm. Non-Exp-local}.} From Proposition \ref{ev}, the operator $\AA$ has two branches of eigenvalues such that the real parts tending to zero. Then the energy corresponding to the first and second branch of eigenvalues is not exponentially decaying. Then the total energy of the wave equations with local Kelvin-Voigt damping with global coupling are not exponentially stable in the equal speed case.

%%%%%%%%%%%%%%%%%%%%%%%%%%%%%%%%%%%%%%%%%%%%%%%%%%%%%%%%%%%%%%%%%%%%%%%%%%%%%%%%%%%%%%%%%%%%%%%%%%%%%%%%%%%%%%%%%%%%%%%%%%%%%%%%%%%%%%%%%%%%%%%%%%%%%%%%%%%%%%%%%%%%%%%%%%%%%%%%%%%%%%
\section{Polynomial Stability}\label{Section-4}
%%%%%%%%%%%%%%%%%%%%%%%%%%%%%%%%%%%%%%%%%%%%%%%%%%%%%%%%%%%%%%%%%%%%%%%%%%%%%%%%%%%%%%%%%%%%%%%%%%%%%%%%%%%%%%%%
\noindent From  Section \ref{section-3},  System \eqref{eq1}-\eqref{eq4} is not uniformly (exponentially) stable, so we look for a polynomial decay rate.
%%%%%%%%%%%%%%%%%%%%%%%%%%%%%%%%%%%%%%%%%%%%%%%%%%
                % Theorem %
%%%%%%%%%%%%%%%%%%%%%%%%%%%%%%%%%%%%%%%%%%%%%%%%%%
\noindent As the condition $ i\mathbb{R}\subset \rho(\mathcal{A})$ is already checked in Lemma \ref{ker}, following Theorem \ref{bt}, it remains to prove that condition \eqref{h1} holds. This is made with the help of a specific  multiplier and by using the exponential decay of an auxiliary problem.
%Firstly, like as \cite{Nicaise1,Nicaise2},
%Define the auxiliary space $\HH_{a}=\left(H^1_0(0,L)\times L^2(0,L)\right)^2$, and the auxiliary unbounded linear operator $\mathcal{A}_{a}$ in $\HH_{a}$ by
%\vspace{0.1cm}
%We introduce the following condition:\\
%\vspace{0.1cm}
%(H): the problem \eqref{aux-prb} is uniformly stable in the energy space $\HH_a$.\\
Our main result in this section is the following theorem.
%%%%%%%%%%%%%%%%%%%%%%%%%%%%%%%%%%%%%%%%%%%%%%%%%%
                % Theorem %
%%%%%%%%%%%%%%%%%%%%%%%%%%%%%%%%%%%%%%%%%%%%%%%%%%
\begin{theoreme}\label{Theorem-4.1}
{\rm There exists a constant $c>0$ independent of $U_0$, such that the energy of system \eqref{eq1}-\eqref{eq4} satisfies the following estimation:
\begin{equation}\label{EnergyGeneral}
E(t)\leq \frac{c}{t}\|U_0\|^2_{D(\AA)},\quad \forall t>0,\ \forall U_0\in D(\AA).
\end{equation}}
\end{theoreme}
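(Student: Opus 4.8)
The plan is to apply the Borichev--Tomilov theorem (Theorem \ref{bt}). Since $i\mathbb{R}\subset\rho(\mathcal{A})$ is already guaranteed by Lemmas \ref{ker} and \ref{surjec}, the decay rate \eqref{EnergyGeneral} is equivalent to the resolvent estimate \eqref{h1} with exponent $2$, that is
\[
\sup_{\lambda\in\mathbb{R}}\frac{1}{\lambda^{2}}\left\|(i\lambda I-\mathcal{A})^{-1}\right\|_{\mathcal{L}(\mathcal{H})}<\infty .
\]
I would establish this by contradiction: if it fails, there exist $\lambda_n\in\mathbb{R}$ with $|\lambda_n|\to+\infty$ and $U_n=(u_n,v_n,y_n,z_n)\in D(\mathcal{A})$ with $\|U_n\|_{\mathcal{H}}=1$ such that
\[
F_n:=\lambda_n^{2}(i\lambda_n I-\mathcal{A})U_n=(f_{1,n},f_{2,n},f_{3,n},f_{4,n})\longrightarrow 0\quad\text{in }\mathcal{H}.
\]
Componentwise this reads $v_n=i\lambda_n u_n-\lambda_n^{-2}f_{1,n}$, $z_n=i\lambda_n y_n-\lambda_n^{-2}f_{3,n}$, together with the two rescaled second-order equations for $u_n$ and $y_n$ with right-hand sides $\lambda_n^{-2}f_{2,n}$ and $\lambda_n^{-2}f_{4,n}$. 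The aim is to force $\|U_n\|_{\mathcal{H}}\to 0$, contradicting the normalization.

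The first step reads off the damping. Pairing $(i\lambda_n I-\mathcal{A})U_n=\lambda_n^{-2}F_n$ with $U_n$ in $\mathcal{H}$ and taking real parts, the dissipation identity of Proposition \ref{Theorem-2.2} gives
\[
\int_{\alpha_1}^{\alpha_3}b_0|v_{n,x}|^{2}\,dx=\Re\langle \lambda_n^{-2}F_n,U_n\rangle_{\mathcal{H}}=o(\lambda_n^{-2}).
\]
Combining this with $v_{n,x}=i\lambda_n u_{n,x}-\lambda_n^{-2}(f_{1,n})_x$ and $f_{1,n}\to 0$ in $H_0^1(0,L)$ yields $\|v_{n,x}\|_{L^2(\alpha_1,\alpha_3)}=o(\lambda_n^{-1})$ and $\|u_{n,x}\|_{L^2(\alpha_1,\alpha_3)}=o(\lambda_n^{-2})$. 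The second step transfers this to the conservative pair on the overlap $(\alpha_2,\alpha_3)$, where both $b\equiv b_0$ and $c\equiv c_0$ are active: testing equation \eqref{eq2} (in rescaled form) against suitable cut-offs and inserting the quantities already controlled lets one estimate $y_n$ and $z_n$ on a subinterval of $(\alpha_2,\alpha_3)$.

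The third step, which is the heart of the argument, propagates these local estimates to all of $(0,L)$. For the Kelvin--Voigt component $u_n$ I would compare with an auxiliary damped wave problem whose semigroup is exponentially stable (proved separately); its frequency-uniform resolvent bound, fed with the coupling term $c(x)z_n$ as a source together with the interface data coming from Step~2, produces a global bound on $\|u_n\|$ and $\|v_n\|$. For the conservative component $y_n$, which obeys a pure wave equation outside $(\alpha_2,\alpha_4)$, I would use a piecewise multiplier $p(x)y_{n,x}$ reinforced by a new multiplier constructed to solve an ordinary differential equation tuned to the interface points $\alpha_1,\dots,\alpha_4$, so that the boundary terms generated at each $\alpha_i$ by the jumps of $b$ and $c$ are exactly absorbed by the already-estimated quantities and no power of $\lambda_n$ is lost. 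Assembling the global estimates then gives $\|U_n\|_{\mathcal{H}}=o(1)$, the desired contradiction, and \eqref{EnergyGeneral} follows from Theorem \ref{bt}.

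The main obstacle is the propagation into the undamped and uncoupled end regions $(0,\alpha_2)$ and $(\alpha_4,L)$ for $y_n$: there the equation is conservative, so all control must be transported across the interfaces by the multipliers, and the non-smoothness of $b$ and $c$ makes the interface terms non-negligible a priori. Designing the ODE-based multiplier so that these terms cancel, while simultaneously keeping track of the exact powers of $\lambda_n$ so the indirect coupling does not degrade the estimate below the $\lambda^{-2}$ threshold required by \eqref{h1}, is the delicate part of the proof.
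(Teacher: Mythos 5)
Your framework (Borichev--Tomilov with $\ell=2$, the contradiction argument, reading off the damping from the dissipation identity, then localizing $v_n$ and $\lambda_n u_n$ on $(\alpha_1,\alpha_3)$ and $\lambda_n y_n$, $z_n$ on the overlap $(\alpha_2,\alpha_3)$) coincides with the paper's through your Step 2 and is sound. The gap is in Step 3. First, feeding $c(x)z_n$ as a source into an exponentially stable auxiliary problem for the $u$-component alone can only yield $\|u_n\|+\|v_n\|=O(1)$, not $o(1)$: the source is supported on $(\alpha_2,\alpha_4)$, and $z_n$ is only known to be $o(1)$ on $(\alpha_2,\alpha_3-2\varepsilon)$, while on $(\alpha_3-2\varepsilon,\alpha_4)$ it is merely bounded. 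Second, and more fundamentally, the asymmetric treatment (auxiliary problem for $u_n$, piecewise multipliers for $y_n$) cannot handle the region $(\alpha_3,\alpha_4)$, where the coupling is active but the damping is not and neither component is controlled by Steps 1--2; the coupling contributions that arise there are of size $\lambda_n^{3}\left|\int c(x)\,y_n\bar{\varphi}\,dx\right|=O(1)$ at best (since $\|y_n\|=O(\lambda_n^{-1})$ and the auxiliary bound gives $\|\varphi\|=O(\lambda_n^{-2})$), and no bookkeeping of powers of $\lambda_n$ brings them below the $o(1)$ threshold. You also locate the main obstacle in the end regions $(0,\alpha_2)$ and $(\alpha_4,L)$, but there $y_n$ satisfies a decoupled free wave equation and standard propagation works once the interface data are small; the genuine obstruction is $(\alpha_3,\alpha_4)$.

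The paper's resolution, which your sketch only gestures at, is to take the new multiplier to be the solution pair $(\varphi,\psi)$ of the \emph{coupled} auxiliary system: two wave equations carrying the same coupling $c(x)$ plus artificial viscous damping $\mathds{1}_{(\alpha_2,\alpha_3-2\varepsilon)}(x)$ on both components, with data $(u,y)$, whose exponential stability (proved in the Appendix) gives the uniform bound $\|\lambda\varphi\|+\|\varphi_x\|+\|\lambda\psi\|+\|\psi_x\|\leq M\left(\|u\|+\|y\|\right)$. Testing the $u$-equation with $\lambda^{2}\bar{\varphi}$, the $y$-equation with $\lambda^{2}\bar{\psi}$, and summing, the dangerous cross terms $-i\lambda^{3}\int c(x)\left(\bar{\psi}u+y\bar{\varphi}\right)dx$ and $+i\lambda^{3}\int c(x)\left(u\bar{\psi}+\bar{\varphi}y\right)dx$ cancel identically by the antisymmetry of the coupling, leaving $\int_0^L|\lambda u|^{2}dx+\int_0^L|\lambda y|^{2}dx=o(1)$ globally in one stroke; the gradient terms then follow by testing with $\bar{u}$ and $\bar{y}$. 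Without this exact cancellation your Step 3 does not close as written.
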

\noindent According to Theorem \ref{bt}, by taking $\ell=2$, the polynomial energy decay \eqref{EnergyGeneral} holds if the following conditions
\begin{equation}\label{H1}\tag{H1}
i\R \subset\rho(A),
\end{equation}
and
\begin{equation}\label{H2}\tag{H2}
\sup_{\lambda\in\mathbb{R}}\left\|\left(i\lambda I-\mathcal{A}\right)^{-1}\right\|_{\mathcal{L}\left(\mathcal{H}\right)}=O\left(|\lambda|^2\right),
\end{equation}
are satisfied. Condition \eqref{H1} is already proved in Lemma \ref{ker}. We will prove condition \eqref{H2} using an argument of contradiction. For this purpose, suppose that \eqref{H2} is false, then there exists $\left\{(\lambda_n,U_n=\left(u_n,v_n,y_n,z_n\right))\right\}_{n\geq 1}\subset \mathbb{R}\times D\left(\mathcal{A}\right)$ and 
\begin{equation}\label{eq-4.2}
\lambda_n\to+\infty,\ \ \|U_n\|_{\mathcal{H}}=1,
\end{equation}
%%%%%%%%%%%%%%%%%%%%%%%%%%%%%%%%%%%%%%%%%%%%%%%%%%
such that
\begin{equation}\label{eq-4.3}
\lambda_n^2\left(\ i\lambda_n U_n-\mathcal{A}U_n\right)=\left(f_{1,n},g_{1,n},f_{2,n},g_{2,n}\right):=F_n\to 0\ \text{ in } \mathcal{H}.
\end{equation}
%%%%%%%%%%%%%%%%%%%%%%%%%%%%%%%%%%%%%%%%%%%%%%%%%%%%%%%%%%%%
%For simplicity, we replace $\la_{n}$ by $\la$; $U_{n}=(u_{n},v_{n},y_n,z_{n})$ by $U=(u,v,y,z)$ and   $F_{n}=\la_{n}^{2}(i\la_{n}I-\mathcal{A})U_{n}=(f_{1,n},g_{1,n},f_{2,n},g_{2,n})$ by $F=(f_{1},g_{1},f_{2},g_{2})$. Detailing \eqref{eq-4.3}, we obtain 
For simplicity, we drop the index $n$. Detailing Equation \eqref{eq-4.3}, we obtain 
\begin{eqnarray}
i\la u-v&=&\lambda^{-2} f_1\longrightarrow 0\ \ \text{in}\ \ H_0^1(0,L),\label{eq-4.4}\\ \noalign{\medskip}
i\la v-(au_{x}+b(x)v_{x})_{x}+c(x)z&=&\lambda^{-2} g_1\longrightarrow 0\ \ \text{in}\ \ L^2(0,L),\label{eq-4.5}\\ \noalign{\medskip}
i\la y-z&=&\lambda^{-2} f_2\longrightarrow 0\ \ \text{in}\ \ H_0^1(0,L),\label{eq-4.6}\\ \noalign{\medskip}
i\la z- y_{xx}-c(x)v&=&\lambda^{-2} g_1\longrightarrow 0\ \ \text{in}\ \ L^2(0,L).\label{eq-4.7}
\end{eqnarray}
Here we will check the condition \eqref{H2} by finding a contradiction with \eqref{eq-4.2} such as $\left\|U\right\|_{\HH}=o(1)$. For clarity, we divide the proof into several lemmas.
%%%%%%%%%%%%%%%%%%%%%%%%%%%%%%%%%%%%%%%%%%%%%%%%%%%%%%%%%%%%
By taking the inner product of \eqref{eq-4.3} with $U$ in $\mathcal{H}$, we remark that
\begin{equation*}
\int _0^L b(x)\left|v_{x}\right|^2dx=-\Re\left(\left<\AA U,U\right>_{\HH}\right)=\Re\left(\left<\left(i\la I-\AA\right)U,U\right>_{\HH}\right)=o\left(\lambda^{-2}\right).
\end{equation*}
Then,
\begin{equation}\label{eq-4.9}
\int _{\alpha_1}^{\alpha_{3}}\left|v_{x}\right|^2dx=o\left(\lambda^{-2}\right).
\end{equation}

%%%%%%%%%%%%%%%%%%%%%%%%%%%%%%%%%%%%%%%%%%%%%%%%%%%%%%%%%%%%
%Lemma
%%%%%%%%%%%%%%%%%%%%%%%%%%%%%%%%%%%%%%%%%%%%%%%%%%%%%%%%%%%%
\begin{rem}\label{remark-4.1} 
\noindent {\rm Since $v$ and $z$ are uniformly bounded in $L^2(0,L)$, then from equations \eqref{eq-4.4} and \eqref{eq-4.6}, the solution $(u,v,y,z)\in D(\AA)$ of \eqref{eq-4.4}-\eqref{eq-4.7} satisfies the following asymptotic behavior estimation
\begin{eqnarray}
\|u\|&=&O\left(\lambda^{-1}\right),\label{eq-4.10}\\ \noalign{\medskip}
\|y\|&=&O\left(\lambda^{-1}\right).\label{eq-4.11}
\end{eqnarray}  
\noindent Using equation \eqref{eq-4.4}, and equation \eqref{eq-4.9} we get
\begin{equation}\label{eq-4.12}
\int _{\alpha_1}^{\alpha_{3}}\left|u_{x}\right|^2dx=o\left(\lambda^{-4}\right).
\end{equation}}
\end{rem}
%%%%%%%%%%%%%%%%%%%%%%%%%%%%%%%%%%%%%%%%%%%%%%%%%%%%%%%%%%%%%%%%%%%%%%%%%%%%%%%%%%%%%%%%%%%%%%%%%%%%%%%%%%%%%%%%%%%%%%%%%%%%%%%%%%%%%%%%%%%%%%%%%%%%%%%%%%%%%%%%%%%%%%%%%%%%%       %%%%%%%%%%%%%%%%%%%%%%%%%%%%%%%

%%%%%%%%%%%%%%%%%%%%%%%%%%%%%%%%%%%%%%%%%%%%%%%%%%%%%%%%%%%%%%%%%%%%%%%%%%%%%%%%%%%%%%%%%%%%%%%%%%                %%%%%%%%%%%%%%%%%%%%%%%%%%%%%%%%5

\begin{lemma}\label{lem.2}
{\rm Let $\varepsilon<\frac{\alpha_3-\alpha_1}{4}$,\ the solution $(u,v,y,z)\in D(\AA$) of the system \eqref{eq-4.4}-\eqref{eq-4.7} satisfies the following estimation

\begin{equation}\label{est-4.1}
\int_{\alpha_{1}+\varepsilon}^{\alpha_{3}-\varepsilon}\left|v\right|^2 dx=o(1)\quad \text{and}\quad \int_{\alpha_{1}+\varepsilon}^{\alpha_{3}-\varepsilon}\abs{\la u}^2dx=o(1).
\end{equation}}
\end{lemma}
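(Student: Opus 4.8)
The plan is to establish the second estimate in \eqref{est-4.1} by a localized multiplier argument, and then to read off the first one from \eqref{eq-4.4}. To confine the analysis to the damped region I would fix a cut-off function $\theta\in C^\infty_c(0,L)$ with $0\le\theta\le 1$, $\operatorname{supp}\theta\subset(\alpha_1,\alpha_3)$ and $\theta\equiv 1$ on $(\alpha_1+\varepsilon,\alpha_3-\varepsilon)$; this is possible precisely because $\varepsilon<\frac{\alpha_3-\alpha_1}{4}$. On $\operatorname{supp}\theta$ one has $b(x)=b_0$, so multiplication by $\theta$ restricts every integral to the interval $(\alpha_1,\alpha_3)$ where the damping-based bounds \eqref{eq-4.9} and \eqref{eq-4.12} are available. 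Note that, a priori, \eqref{eq-4.10} only gives $\la^2\int_0^L|u|^2\,dx=O(1)$, so the content of the lemma is that the localized quantity is in fact $o(1)$.

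The core computation is to multiply \eqref{eq-4.5} by $\theta\bar u$ and integrate over $(0,L)$. Integrating the second-order term by parts produces no boundary contribution since $\theta$ is compactly supported, and on $\operatorname{supp}\theta$ we may replace $b(x)$ by $b_0$; rewriting the $i\la v$ term by inserting $v=i\la u-\la^{-2}f_1$ from \eqref{eq-4.4} produces the factor $-\la^2\int_0^L\theta|u|^2\,dx$. After rearrangement the real nonnegative quantity $\la^2\int_0^L\theta|u|^2\,dx$ equals
\begin{equation*}
\int_0^L(au_x+b_0v_x)(\theta_x\bar u+\theta\bar u_x)\,dx+\int_0^L c(x)z\,\theta\bar u\,dx-i\la^{-1}\int_0^L f_1\theta\bar u\,dx-\la^{-2}\int_0^L g_1\theta\bar u\,dx,
\end{equation*}
and it remains to bound the modulus of each term by $o(1)$.

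To finish, I would estimate each term by Cauchy–Schwarz together with \eqref{eq-4.9}, \eqref{eq-4.10}, \eqref{eq-4.12} and the uniform bound $\|z\|\le\|U\|_{\HH}=1$. The four contributions in the first integral are $o(\la^{-2})$ at worst (the slowest being $b_0\int v_x\theta_x\bar u$, of size $o(\la^{-1})\cdot O(\la^{-1})$); the coupling integral is at most $c_0\|z\|\,\|u\|=O(\la^{-1})$; and the last two are $o(\la^{-2})$ since $\|f_1\|$ and $\|g_1\|$ are $o(1)$ while $\|u\|=O(\la^{-1})$. Hence $\la^2\int_0^L\theta|u|^2\,dx=O(\la^{-1})=o(1)$, and since $\theta\equiv 1$ on $(\alpha_1+\varepsilon,\alpha_3-\varepsilon)$ this yields $\int_{\alpha_1+\varepsilon}^{\alpha_3-\varepsilon}|\la u|^2\,dx=o(1)$. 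The estimate for $v$ then follows at once from $v=i\la u-\la^{-2}f_1$ via $|v|^2\le 2|\la u|^2+2\la^{-4}|f_1|^2$ together with $\|f_1\|=o(1)$. The only real difficulty is the bookkeeping: one must keep the cut-off strictly inside $(\alpha_1,\alpha_3)$ so that \eqref{eq-4.9} and \eqref{eq-4.12} apply on the whole support, and track the powers of $\la$ carefully so that the leading term, which is $O(1)$ a priori, is genuinely forced to $o(1)$—the slowest-decaying error being the coupling term at order $\la^{-1}$.
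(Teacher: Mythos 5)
Your proof is correct and is essentially the paper's argument: the paper tests \eqref{eq-4.5} against $\lambda^{-1}\rho\bar v$ (obtaining the $v$-estimate first and then the $\lambda u$-estimate from \eqref{eq-4.4}), whereas you test against $\theta\bar u$ and reverse the order; since $v=i\lambda u-\lambda^{-2}f_1$, the two multipliers agree up to a factor $-i$ and negligible corrections, and both rest on the same ingredients \eqref{eq-4.9}, \eqref{eq-4.12} and Remark \ref{remark-4.1}. Your bookkeeping of the powers of $\lambda$ (with the coupling term $O(\lambda^{-1})$ as the slowest contribution) is accurate.
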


\begin{proof}
We define the function $\rho\in C_{0}^{\infty}(0,L)$ by
\begin{equation}
\rho(x)=\left\{\begin{array}{ccc}
1&\text{if}&x\in (\alpha_{1}+\epsilon,\alpha_{3}-\epsilon),\\
0&\text{if}& x\in (0,\alpha_{1})\cup(\alpha_{3},L),\\
0\leq\rho \leq 1&& elsewhere.
\end{array}\right.
\end{equation}
Multiply equation \eqref{eq-4.5} by $\dfrac{1}{\lambda}\rho\bar{v}$, integrate over $(0,L)$, using the fact that $\|g_1\|_{L^2(0,L)}=o(1)$ and $v$ is uniformly bounded in $L^2(\Omega)$, we get
\begin{equation}\label{est-41.2}
 \int_0^L i\rho \left|v\right|^2 dx+\dfrac{1}{\lam}\int_0^L (au_{x}+b(x)v_{x})\left(\rho^{\prime} \bar{v}+\rho\bar{v}_{x}\right)dx+\dfrac{1}{\lam}\int_0^L  c(x)z\rho\bar{v}dx=o(\lam ^{-3}).
\end{equation}
Using Equation \eqref{eq-4.9}, Remark \ref{remark-4.1} and the fact that $v$ and $z$ are uniformly bounded in $L^2(\Omega)$, we get 
\begin{equation}\label{1est-41.2}
\dfrac{1}{\lam}\int_0^L (au_{x}+b(x)v_{x})\left(\rho^{\prime} \bar{v}+\rho\bar{v}_{x}\right)dx=o(\la^{-2})\quad \text{and}\quad \dfrac{1}{\lam}\int_0^L  c(x)z\rho\bar{v}dx=o(1).
\end{equation}
Inserting Equation \eqref{1est-41.2} in Equation \eqref{est-41.2}, we obtain 
\begin{equation}\label{firstfirstfirst}
\int_0^L i\rho\left|v\right|^2dx=o(1).
\end{equation}
Hence, we obtain the first estimation in Equation \eqref{est-4.1}. Now, multiplying Equation \eqref{eq-4.4} by $\lam \rho\bar{u}$ integrate over $(0,L)$ and using the fact that $\|f_1\|_{H_0^1(\Omega)}=o(1)$ and Remark \ref{remark-4.1}, we get 
\begin{equation*}
\int_0^L i\rho\left|\lam u\right|^2dx-\int_0^L\rho\lam v \bar{u}dx=o(\lam^{-2}).
\end{equation*}
Using Equation \eqref{firstfirstfirst}, we get 
\begin{equation*}
\int_0^L i\rho\left|\lam u\right|^2dx=o(1).
\end{equation*}
Then, we obtain the desired second estimation in Equation \eqref{est-4.1}.
\end{proof}
%%%%%%%%%%%%%%%%%%%%%%%%%%%%%%%%%%%%%%%%%%%%
%%%%%%%%%%%%%%%%%%%%%%%%%%%%%%%%%%%%%%%%%%%%
$\newline$ \\
\noindent Inserting equations \eqref{eq-4.4} and \eqref{eq-4.6} respectively in equations \eqref{eq-4.5} and \eqref{eq-4.7}, we get
\begin{eqnarray}
\lam ^{2}u+(au_{x}+b(x)v_{x})_{x}-i\lam c(x)y&=&F_1,\label{eq-4.13}\\
\lam ^{2}y+y_{xx}+i\lam c(x)u&=&F_2,\label{eq-4.14}
\end{eqnarray}
where 
\begin{equation}\label{F1F2}
F_1=-\lam^{-2}g_{1}-i\lam^{-1}f_{1}-c(x)\lam^{-2}f_{2}\quad \text{and}\quad F_2=-\lam^{-2}g_{2}-i\lam^{-1}f_{2}+c(x)\lam^{-2}f_{1}.
\end{equation}
%%%%%%%%%%%%%%%%%%%%%%
%%%%%%%% Estimation on \la y and z
%%%%%%%%%%%%%%%%%%%%%%
\begin{lemma}\label{lem-4.2}
{\rm Let $\varepsilon<\frac{\alpha_3-\alpha_1}{4}$, the solution $(u,v,y,z)\in D(\AA$) of the system \eqref{eq-4.4}-\eqref{eq-4.7} satisfies the following estimation
\begin{equation}\label{est-4.2}
\int_{\alpha_{2}}^{\alpha_{3}-2\varepsilon}\left|\lam y\right|^2 dx=o(1)\quad\mbox{and}\quad \int_{\alpha_{2}}^{\alpha_{3}-2\varepsilon}\left|z\right|^{2}dx=o(1).
\end{equation}}
\end{lemma}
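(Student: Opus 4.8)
The plan is to localize in the overlap interval $(\alpha_2,\alpha_3-2\varepsilon)$, where the coupling is active ($c(x)=c_0$) and where the dissipation estimates of Lemma \ref{lem.2} and Remark \ref{remark-4.1} are available, and to transfer the smallness of the $u$‑quantities onto $y$ through the coupling. First I would fix a cut‑off $\theta\in C_c^\infty(0,L)$ with $\theta\equiv 1$ on $(\alpha_2,\alpha_3-2\varepsilon)$ and $\operatorname{supp}\theta\subset(\alpha_1+\varepsilon,\alpha_3-\varepsilon)\cap(\alpha_2,\alpha_4)$, which is legitimate precisely because $\varepsilon<\tfrac{\alpha_3-\alpha_1}{4}$. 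Since $z=i\la y-\la^{-2}f_2$ by \eqref{eq-4.6}, one has $\int\theta|z|^2\le 2\int\theta|\la y|^2+o(1)$, so the whole statement reduces to proving $\int\theta|\la y|^2=o(1)$.

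The first step is an energy identity: multiply \eqref{eq-4.14} by $\theta\bar y$ and integrate by parts over $(0,L)$. The coupling term $i\la c_0\int\theta u\,\bar y$ is $o(1)$ by Cauchy--Schwarz, using $\|\la u\|_{L^2(\operatorname{supp}\theta)}=o(1)$ from \eqref{est-4.1} together with $\|y\|=O(\la^{-1})$ from \eqref{eq-4.11}; the source term carrying $F_2$ and the lower‑order term $\int\theta' y_x\bar y$ are $O(\la^{-1})$. This yields the equipartition relation
\[
\int\theta\bigl(\la^2|y|^2-|y_x|^2\bigr)\,dx=o(1).
\]
Thus it remains to show $\int\theta|y_x|^2=o(1)$ (equivalently to break the equipartition), and I would obtain this from a piecewise (Rellich‑type) multiplier: multiply \eqref{eq-4.14} by $h\,\bar y_x$ for a suitable real weight $h$ supported in $\operatorname{supp}\theta$ and take real parts, producing $\tfrac12\int h'\bigl(\la^2|y|^2+|y_x|^2\bigr)=o(1)$. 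Here the coupling contribution $\operatorname{Re}\int h\,i\la c_0 u\,\bar y_x$ is annihilated to leading order by $\|\la u\|=o(1)$ and by the strong derivative bound $\int|u_x|^2=o(\la^{-4})$ of \eqref{eq-4.12}, while the $F_2$‑terms are negligible. Combining this with the equipartition identity and choosing $h'$ and $\theta$ compatibly on the $\varepsilon$‑collar $(\alpha_3-2\varepsilon,\alpha_3-\varepsilon)$, where Lemma \ref{lem.2} still controls $u$, gives $\int_{\alpha_2}^{\alpha_3-2\varepsilon}\la^2|y|^2\,dx=o(1)$ and hence both estimates in \eqref{est-4.2}.

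The main obstacle is exactly this transfer of smallness across the coupling. Because the $y$‑equation \eqref{eq-4.14} is conservative (no direct damping on $y$), the plain energy identity only yields equipartition between $\la^2|y|^2$ and $|y_x|^2$; it does not by itself force either quantity to be small, and the $u$‑equation \eqref{eq-4.13} (being equivalent to \eqref{eq-4.5} via \eqref{eq-4.4}--\eqref{eq-4.6}) paired with $\bar y$ gives only the weak bound $\la\int\theta|y|^2=o(1)$. The delicate point is therefore the design of the multiplier $h$, whose role (with the width‑$\varepsilon$ collar absorbing the sign‑indefinite boundary contributions of $h'$) is to convert the dissipation information carried only by $u$ into coercive control of $\int\theta|y_x|^2$. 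Getting the error terms in the Rellich identity down to genuine $o(1)$, rather than merely $O(1)$, is where essentially all the work in this lemma concentrates.
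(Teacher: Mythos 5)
Your reduction of the $z$-estimate to the $\la y$-estimate via \eqref{eq-4.6} and your equipartition identity $\int\theta\bigl(\la^2|y|^2-|y_x|^2\bigr)\,dx=o(1)$ are both sound, but the step that is supposed to break the equipartition does not work, and this is where the lemma actually lives. The Rellich identity $\tfrac12\int h'\bigl(\la^2|y|^2+|y_x|^2\bigr)\,dx=o(1)$ with $h$ compactly supported forces $\int h'=0$, so $h'$ must change sign; on the set where $h'<0$ (the collars) the integrand $\la^2|y|^2+|y_x|^2$ is only $O(1)$, not $o(1)$ --- Lemma \ref{lem.2} and \eqref{eq-4.12} control $u$, $v$ and $u_x$ there, but give no smallness whatsoever for $y$ or $y_x$, since the $y$-equation carries no damping and no prior estimate localizes the $y$-energy. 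Hence the identity only yields $\int_{h'>0}h'(\la^2|y|^2+|y_x|^2)=O(1)$, i.e.\ boundedness, which you already had. Taking $h$ non-compactly supported instead produces uncontrolled pointwise boundary terms $|y_x|^2$ at the endpoints. There is simply no region to anchor the multiplier, so the transfer of smallness from $u$ to $y$ cannot be achieved this way.

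The paper's mechanism is different and exploits the antisymmetry of the coupling rather than a Rellich weight: one tests \eqref{eq-4.13} against $\la\zeta\bar y$ \emph{and} \eqref{eq-4.14} against $\la\zeta\bar u$, sums, and takes the imaginary part. The dangerous bilinear terms combine into $\la^3\int\zeta\,(u\bar y+y\bar u)\,dx=2\la^3\int\zeta\,\Re(u\bar y)\,dx$, which is real and therefore drops out, while the coupling terms contribute $i\int c(x)\zeta\bigl(|\la u|^2-|\la y|^2\bigr)dx$; all remaining terms are $o(1)$ by Remark \ref{remark-4.1}, Lemma \ref{lem.2} and \eqref{eq-4.9}. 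Since $\int\zeta c(x)|\la u|^2dx=o(1)$ is already known, this yields $\int\zeta c(x)|\la y|^2dx=o(1)$ directly, without ever needing control of $|y_x|^2$. You should replace your Rellich step by this symmetrized pairing; the rest of your argument (the $z$-reduction) then goes through as in the paper.
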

%%%%%%%%%%%%%%%%%%%%%%
\begin{proof}
We define the function $\zeta\in C_{0}^{\infty}(0,L)$ by
\begin{equation}
\zeta(x)=\left\{\begin{array}{ccc}
1&\text{if}&x\in (\alpha_{1}+2\varepsilon,\alpha_{3}-2\varepsilon),\\
0&\text{if}& x\in (0,\alpha_{1}+\varepsilon)\cup(\alpha_{3}-\varepsilon,L),\\
0\leq\zeta\leq 1&& elsewhere.
\end{array}\right.
\end{equation}
Multiply equations \eqref{eq-4.13} by $\lam\zeta\bar{y}$ and \eqref{eq-4.14} by $\lam\zeta\bar{u}$ respectively, integrate over $(0,L)$, using Remark \ref{remark-4.1} and the fact that $\|F\|_{\mathcal{H}}=\|(f_1,g_1,f_2,g_2)\|_{\mathcal{H}}=o(1)$, we get
\begin{equation}\label{eq-4.15}
\int_0^L\lam^{3}\zeta u\bar{y}dx-\int_0^L\lam\left(au_{x}+b(x)v_{x}\right)(\zeta^{\prime}\bar{y}+\zeta\bar{y}_{x})dx-i\int_0^L c(x)\zeta(x)\left|\lam y \right|^2 dx=o(\la^{-1})
\end{equation}
and
\begin{equation}\label{eq-4.16}
\int_0^L\lam^{3}\zeta y\bar{u}dx-\int_0^L\lam y_{x}\zeta^{\prime}\bar{u}_{x}dx-\int_0^L\lam y_{x}\zeta\bar{u}_{x}dx+i\int_0^L c(x)\zeta(x)\left|\lam u \right|^2 dx=o(\la^{-1}).
\end{equation}
Using Remark \ref{remark-4.1}, Lemma \ref{lem.2} and the fact that $y_x$ is uniformly bounded in $L^2(0,L)$, we get 
\begin{equation}\label{1eq-4.16}
\int_0^L\lam\left(au_{x}+b(x)v_{x}\right)(\zeta^{\prime}\bar{y}+\zeta\bar{y}_{x})dx=o(1),\quad -\int_0^L\lam y_{x}\zeta^{\prime}\bar{u}_{x}dx=o(1)\quad \text{and}\quad  \int_0^L\lam y_{x}\zeta\bar{u}_{x}dx=o(1).
\end{equation}
Using Lemma \ref{lem.2}, we have that
\begin{equation}\label{2eq-4.16}
\int_0^L c(x)\zeta\left|\lam u \right|^2 dx=o(1).
\end{equation}
Inserting Equations \eqref{1eq-4.16} and \eqref{2eq-4.16} in Equations \eqref{eq-4.15} and \eqref{eq-4.16}, and summing the result by taking the imaginary part, and using the definition of the functions $c$ and $\zeta$, we get the first estimation of Equation \eqref{est-4.2}.\\
%Inserting Equation \eqref{1eq-4.16} in Equations \eqref{eq-4.15} and \eqref{eq-4.16}, and summing the result by taking the imaginary part, we obtain 
%\begin{equation}
%-\int_0^L c(x)\zeta\left|\lam y \right|^2 dx+\int_0^L c(x)\zeta\left|\lam u \right|^2 dx=o(1).
%\end{equation}
%Then, using Lemma \ref{lem.2}, we obtain 
%\begin{equation}\label{2eq-4.16}
%-\int_0^L c(x)\zeta\left|\lam y \right|^2 dx=o(1).
%\end{equation}
%By the definition of the function $c$ and $\zeta$, we get the first estimation in Equation \eqref{est-4.2}.\\
Now, multiplying equation \eqref{eq-4.6} by $\bar{z}$, integrating over $(\alpha_2,\alpha_3-2\varepsilon)$ and using the fact that $\|f_2\|_{H_0^1(0,L)}=o(1)$ and $z$ is uniformly bounded in $L^2(0,L)$, in particular in $L^2(\alpha_2,\alpha_3-2\varepsilon)$, we get
\begin{equation*}
\int_{\alpha_2}^{\alpha_3-2\varepsilon} i\lam y\bar{z}dx-\int_{\alpha_2}^{\alpha_3-2\varepsilon}\left|z\right|^{2}dx=o(\lam^{-2}).
\end{equation*}
Then, using the first estimation of Equation \eqref{est-4.2}, we get the second desired estimation of Equation \eqref{est-4.2}.
\end{proof}
\newpage
%%%%%%%%%%%%%%%%%%%%%%%%%%%%%%%%%%%%%%%%%%%%
%%%%%%%%%%%%%%%%%%%%%% Auxiliary Problem
%%%%%%%%%%%%%%%%%%%%%%%%%%%%%%%%%%%%%%%%%%%%

Now, like as \cite{Rayan2019}, we will construct a new multiplier satisfying some ordinary differential systems. 
\begin{lemma}
{\rm Let $0<\alpha_1<\alpha_2<\alpha_3<\alpha_4<L$ and suppose that  $\varepsilon<\frac{\alpha_3-\alpha_1}{4}$, and $c(x)$ the function defined in Equation \eqref{bc}. Then, for any $\la\in \R$, the solution $\left(\varphi,\psi\right)\in ((H^2(0,L)\cap H^1_0(0,L))^2$ of system 
\begin{equation}\label{aux2}
\left\{
\begin{array}{l}
\lam ^{2}\varphi+a\varphi_{xx}-i\lam\left({\mathds{1}}_{(\alpha_{2},\alpha_{3}-2\varepsilon)}\right)(x)\varphi-i\lam c(x)\psi=u,\hspace{0.5cm}x\in (0,L)\\ \\
\lam ^{2}\psi+\psi_{xx}-i\lam\left({\mathds{1}}_{(\alpha_{2},\alpha_{3}-2\varepsilon)}\right)(x)\psi+i\lam c(x)\varphi=y, \hspace{0.7cm}x\in (0,L)\\ \\
\varphi(0)=\varphi(L)=0,\\ \\
\psi(0)=\psi(L)=0,
\end{array}\right.
\end{equation}
satisfies the following estimation
\begin{equation}\label{bdd}
\|\lam\varphi\|_{L^{2}(0,L)}^{2}+\|\varphi_{x}\|_{L^{2}(0,L)}^{2}+\|\lam\psi\|_{L^{2}(0,L)}^{2}+\|\psi_{x}\|_{L^{2}(0,L)}^{2}\leq M\left(\| u\|_{L^{2}(0,L)}^{2}+\| y\|_{L^{2}(0,L)}^{2}\right).
\end{equation}}
\end{lemma}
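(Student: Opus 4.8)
The plan is to recognise \eqref{aux2} as the resolvent equation of an auxiliary operator carrying a genuine (frictional) damping, and then to read off \eqref{bdd} from the exponential stability of that operator. Writing $I_\varepsilon:=(\alpha_2,\alpha_3-2\varepsilon)$ and setting $\Theta=i\la\varphi$, $\Xi=i\la\psi$, one checks directly that \eqref{aux2} is equivalent to $(i\la I-\mathcal{A}_d)\Phi=(0,-u,0,-y)^{\top}$, where $\Phi=(\varphi,i\la\varphi,\psi,i\la\psi)$ and $\mathcal{A}_d$ is defined on $\HH$, with domain $\big((H^2(0,L)\cap H^1_0(0,L))\times H^1_0(0,L)\big)^2$, by $\mathcal{A}_d(\varphi,\Theta,\psi,\Xi)=\big(\Theta,\,a\varphi_{xx}-\mathds{1}_{I_\varepsilon}\Theta-c(x)\Xi,\,\Xi,\,\psi_{xx}-\mathds{1}_{I_\varepsilon}\Xi+c(x)\Theta\big)$. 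Since $\|\Phi\|_{\HH}^2=\|\la\varphi\|^2+a\|\varphi_x\|^2+\|\la\psi\|^2+\|\psi_x\|^2$ and $\|(0,-u,0,-y)\|_{\HH}^2=\|u\|^2+\|y\|^2$, the estimate \eqref{bdd} is, up to the fixed constant $a>0$, exactly the assertion that $(i\la I-\mathcal{A}_d)^{-1}$ is bounded on $\HH$ uniformly in $\la\in\R$.

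Accordingly I would first set up $\mathcal{A}_d$ as in Section \ref{Section-2}. Using the Dirichlet conditions to discard the boundary terms, a short computation gives $\Re\left\langle\mathcal{A}_d\Phi,\Phi\right\rangle_{\HH}=-\int_{I_\varepsilon}\big(|\Theta|^2+|\Xi|^2\big)\le 0$, the coupling $c(x)(\Theta\bar\Xi-\Xi\bar\Theta)$ being purely imaginary and hence energy-conserving; thus $\mathcal{A}_d$ is m-dissipative and generates a contraction semigroup. The imaginary axis lies in $\rho(\mathcal{A}_d)$ by the same two arguments used for $\AA$: injectivity of $i\la I-\mathcal{A}_d$ is obtained as in Lemma \ref{ker} (the damping forces $\Theta=\Xi=0$ on $I_\varepsilon$, and the ODE analysis across the undamped subintervals together with the boundary conditions propagates this to all of $(0,L)$), and surjectivity follows from a Lax--Milgram/Fredholm argument as in Lemma \ref{surjec}. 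By the Huang--Pr\"uss criterion (Theorem \ref{hp}), the contraction semigroup $\left(e^{t\mathcal{A}_d}\right)_{t\ge 0}$ is exponentially stable if and only if $i\R\subset\rho(\mathcal{A}_d)$ and $\sup_{\la\in\R}\|(i\la I-\mathcal{A}_d)^{-1}\|_{\mathcal{L}(\HH)}<\infty$, so that \eqref{bdd} reduces precisely to the exponential stability of $\mathcal{A}_d$.

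I would then establish the uniform resolvent bound by contradiction, in the spirit of the proof of \eqref{H2} that follows. Suppose it fails: there exist $\la_n\in\R$ and $\Phi_n=(\varphi_n,\Theta_n,\psi_n,\Xi_n)$ with $\|\Phi_n\|_{\HH}=1$ and $(i\la_n I-\mathcal{A}_d)\Phi_n\to 0$ in $\HH$. Taking the real part of the inner product of this relation with $\Phi_n$ and using the dissipation identity yields $\int_{I_\varepsilon}\big(|\Theta_n|^2+|\Xi_n|^2\big)=o(1)$, whence $\|\la_n\varphi_n\|_{L^2(I_\varepsilon)}=o(1)$ and $\|\la_n\psi_n\|_{L^2(I_\varepsilon)}=o(1)$. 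The remaining task, and the step I expect to be the main obstacle, is the propagation/observability argument: one must transfer this smallness from $I_\varepsilon$ to the whole of $(0,L)$, showing that the energy-conserving coupling $c(x)$ cannot sustain an undamped oscillation concentrated away from $I_\varepsilon$. On the undamped intervals $(0,\alpha_2)$ and $(\alpha_3-2\varepsilon,L)$ the pair $(\varphi_n,\psi_n)$ solves a coupled second-order system whose Cauchy data at the interfaces is controlled by the quantities already shown to be $o(1)$, so cut-off and piecewise-multiplier estimates of the same type as in Lemmas \ref{lem.2} and \ref{lem-4.2}, combined with the Dirichlet conditions, force $\|\Phi_n\|_{\HH}\to 0$ and contradict $\|\Phi_n\|_{\HH}=1$. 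In one space dimension the geometry is favourable, since both components are directly damped on $I_\varepsilon$ and every characteristic meets it, so the contradiction closes; the delicate point is making the interface analysis uniform in $\la_n$.
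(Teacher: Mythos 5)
Your proposal follows essentially the same route as the paper: the paper likewise reduces \eqref{bdd} to the uniform boundedness on $i\R$ of the resolvent of the auxiliary operator $\mathcal{A}_a$ (your $\mathcal{A}_d$), obtained from the exponential stability of System \eqref{aux-prb} proved in the Appendix (Theorem \ref{auxiliary-problem-Exp}) via the Huang--Pr\"uss criterion and a contradiction argument combining the dissipation identity on $(\alpha_2,\alpha_3-2\varepsilon)$ with cut-off and piecewise multipliers, exactly as you outline. The only organizational difference is that the paper isolates the exponential stability of the auxiliary problem as a separate appendix theorem, so that the proof of the lemma itself is a two-line application of the uniform resolvent bound to the right-hand side $(0,-u,0,-y)^{\top}$.
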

\begin{proof}
%the exponential stability of system \eqref{aux-prb} implies that the resolvent of the auxiliary operator $\mathcal{A}_a$ defined by 
Following Theorem \ref{auxiliary-problem-Exp}, the exponential stability of System \eqref{aux-prb}, proved in the Appendix, implies that the resolvent of the auxiliary operator $\mathcal{A}_a$ defined by \eqref{dofao}-\eqref{auxiliary-operator}  is uniformly bounded on the imaginary axis i.e. there exists $M>0$ such that
\begin{equation}\label{expo}
\sup_{\la\in \R}\|\left(i\lam I-\mathcal{A}_{a}\right)^{-1}\|_{\mathcal{L}\left(\mathcal{H}_a\right)}\leq M<+\infty
\end{equation}
where $\mathcal{H}_a=\left(H_0^1(0,L)\times L^2(0,L)\right)^2$.
%%%%%%
\begin{comment}
\begin{equation*}
D(\mathcal{A}_a)=\left((H^2(0,L)\cap H^1_0(0,L))\times H^1_0(0,L)\right)^2,
\end{equation*}
and
\begin{equation*}
\mathcal{A}_a(\varphi,\eta,\psi,\xi)=(\eta,a\varphi_{xx}-\chi_{(\alpha_{2},\alpha_{3}-2\epsilon)}\eta-c(x)\xi,\xi,\psi_{xx}-\chi_{(\alpha_{2},\alpha_{3}-2\epsilon)}\xi+c(x)\eta)^{T}.
\end{equation*}
contains $i\R$ and the resolvent $\left(i\la I-\mathcal{A}_a\right)$ of $\mathcal{A}_a$ is uniformly boundary on the imaginary axis. Consequently, there exists a positive constant $M>0$ such that  
\end{comment}
%%%%
Now, since $(u,y)\in H^1_0(0,L)\times H^1_0(0,L)$, then $(0,-u,0,-y)$ belongs to $\HH_a$, and from \eqref{expo}, there exists $(\varphi,\eta,\psi,\xi)\in D(\mathcal{A}_a)$ such that $\left(i\lam I-\mathcal{A}_{a}\right)(\varphi,\eta,\psi,\xi)=(0,-u,0,-y)^{\top}$ $i.e.$

\begin{eqnarray}
i\la \varphi-\eta&=&0,\label{x1}\\ \noalign{\medskip}
i\la \eta-a\varphi_{xx}+\left({\mathds{1}}_{(\alpha_{2},\alpha_{3}-2\varepsilon)}\right)(x)\eta+c(x)\xi&=&-u,\label{x2} \\ \noalign{\medskip}
i\la \psi-\xi&=&0,\label{x3}\\ \noalign{\medskip}
i\la \xi- \psi_{xx}+\left({\mathds{1}}_{(\alpha_{2},\alpha_{3}-2\varepsilon)}\right)(x)\xi-c(x)\eta &=&-y,\label{x4}
\end{eqnarray}
such that
\begin{equation}\label{x4.1}
\|(\varphi,\eta,\psi,\xi)\|_{\HH_a}\leq M\left(\|u\|_{L^{2}(0,L)}+\|y\|_{L^{2}(0,L)}\right).
\end{equation}
From equations \eqref{x1}-\eqref{x4.1}, we deduce that $(\varphi,\psi)$ is a solution of \eqref{aux2} and we have 
\begin{equation*}
\|\lam\varphi\|_{L^{2}(0,L)}^{2}+\|\varphi_{x}\|_{L^{2}(0,L)}^{2}+\|\lam\psi\|_{L^{2}(0,L)}^{2}+\|\psi_{x}\|_{L^{2}(0,L)}^{2}\leq M\left(\| u\|_{L^{2}(0,L)}^{2}+\| y\|_{L^{2}(0,L)}^{2}\right).
\end{equation*}
Then, we get our desired result.
\end{proof}
\begin{rem}{\rm There was no reference found for the proof of the exponential stability of System \eqref{aux-prb} when the coefficients of the damping and the coupling are both non smooth. For this, we give the proof of the exponential stability of System \eqref{aux-prb} in Theorem \ref{auxiliary-problem-Exp} (see Subsection \ref{EXP-AUX} in Appendix section).}
\end{rem}
%%%%%%%%%%%%%%%%%%%%%%%%%%%%%%%%%%%%%%%%%%%%
%%%%%%%%%%%%%%%%%%%%%% Auxiliary Problem
%%%%%%%%%%%%%%%%%%%%%%%%%%%%%%%%%%%%%%%%%%%%
\begin{lemma}\label{final1}
{\rm Let $\varepsilon<\frac{\alpha_3-\alpha_1}{4}$. Then, the solution $(u,v,y,z)\in D(\mathcal{A})$ of \eqref{eq-4.4}-\eqref{eq-4.7} satisfies the following asymptotic behavior estimation
\begin{equation}\label{lamu}
\int_0^L \left|\lam u\right|^2dx=o(1),
\end{equation}
and 
\begin{equation}\label{lamy}
\int_0^L \left|\lam y\right|^2dx=o(1).
\end{equation}
}
\end{lemma}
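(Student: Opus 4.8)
The plan is to test the coupled system \eqref{eq-4.13}--\eqref{eq-4.14} against the auxiliary multiplier $(\varphi,\psi)$ constructed in the previous lemma, chosen so that its right-hand side is exactly the pair $(u,y)$, and to exploit the uniform bound \eqref{bdd}. Since \eqref{bdd} together with \eqref{eq-4.10}--\eqref{eq-4.11} yields
\[
\|\lam\varphi\|_{L^{2}(0,L)}+\|\varphi_{x}\|_{L^{2}(0,L)}+\|\lam\psi\|_{L^{2}(0,L)}+\|\psi_{x}\|_{L^{2}(0,L)}=O\!\left(\|u\|_{L^{2}(0,L)}+\|y\|_{L^{2}(0,L)}\right)=O(\lam^{-1}),
\]
the multiplier is small, in particular $\|\varphi\|_{L^2(0,L)},\|\psi\|_{L^2(0,L)}=O(\lam^{-2})$. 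The whole point of this choice is that the only surviving ``bad'' term in the resulting identity is localized in $(\alpha_{2},\alpha_{3}-2\varepsilon)$, which is precisely the region already controlled by Lemmas \ref{lem.2} and \ref{lem-4.2}.

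Concretely, I would first multiply \eqref{eq-4.13} by $\bar\varphi$ and \eqref{eq-4.14} by $\bar\psi$, integrate over $(0,L)$ and integrate by parts once (the boundary terms vanish since $\varphi,\psi\in H^1_0(0,L)$), and then take complex conjugates. Separately, I would multiply the two equations of \eqref{aux2} by $\bar u$ and $\bar y$, integrate, and integrate by parts (using $u,y\in H^1_0(0,L)$ and $\varphi,\psi\in H^2(0,L)$); this expresses $\int_0^L(|u|^2+|y|^2)\,dx$ through $\lam^2\int(\varphi\bar u+\psi\bar y)$, the $H^1$-pairings $-a\int\varphi_{x}\bar u_{x}-\int\psi_{x}\bar y_{x}$, the coupling $i\lam\int c(x)(\varphi\bar y-\psi\bar u)$, and the localized damping term $-i\lam\int\mathds{1}_{(\alpha_{2},\alpha_{3}-2\varepsilon)}(\varphi\bar u+\psi\bar y)$. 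Combining the two sets of relations, the $\lam^2$-pairing, the $H^1$-pairings and the coupling terms cancel exactly, leaving the key identity
\[
\int_0^L\!\left(|u|^2+|y|^2\right)dx=\int_0^L\!\left(\varphi\bar F_1+\psi\bar F_2\right)dx+\int_0^L b(x)\,\bar v_x\varphi_x\,dx-i\lam\int_{\alpha_{2}}^{\alpha_{3}-2\varepsilon}\!\left(\varphi\bar u+\psi\bar y\right)dx,
\]
with $F_1,F_2$ as in \eqref{F1F2}.

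It then remains to estimate the three terms on the right. Using $\|\varphi\|,\|\psi\|=O(\lam^{-2})$ and $\|F_1\|_{L^2},\|F_2\|_{L^2}=o(\lam^{-1})$ (which follows from \eqref{F1F2} and $F_n\to 0$ in $\HH$), the first term is $o(\lam^{-3})$. For the second term, Cauchy--Schwarz with \eqref{eq-4.9} and $\|\varphi_x\|_{L^2}=O(\lam^{-1})$ gives $o(\lam^{-2})$. For the third term, on $(\alpha_{2},\alpha_{3}-2\varepsilon)$ Lemmas \ref{lem.2} and \ref{lem-4.2} give $\|u\|_{L^2},\|y\|_{L^2}=o(\lam^{-1})$, so together with $\|\varphi\|,\|\psi\|=O(\lam^{-2})$ the prefactor $\lam$ leaves $o(\lam^{-2})$. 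Hence $\int_0^L(|u|^2+|y|^2)\,dx=o(\lam^{-2})$, that is $\int_0^L|\lam u|^2dx=o(1)$ and $\int_0^L|\lam y|^2dx=o(1)$, which are \eqref{lamu} and \eqref{lamy}.

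The hard part will be the bookkeeping in the cross-pairing: one must conjugate the tested main equations and match them termwise against the identity coming from \eqref{aux2} so that the dominant $\lam^2$-pairing, the $H^1$-pairings and the (skew-symmetric) coupling terms cancel identically, leaving only the $F$-terms, the $b(x)$-damping contribution and the artificial damping term of the auxiliary operator. The design of \eqref{aux2}, whose damping $\mathds{1}_{(\alpha_{2},\alpha_{3}-2\varepsilon)}$ sits exactly inside the region where \eqref{est-4.1}--\eqref{est-4.2} already furnish smallness, is what renders that last term harmless; the only geometric point to watch is the inclusion $(\alpha_{2},\alpha_{3}-2\varepsilon)\subset(\alpha_{1}+\varepsilon,\alpha_{3}-\varepsilon)$ needed to invoke Lemma \ref{lem.2} for the factor $u$.
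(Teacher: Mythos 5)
Your proposal is correct and follows essentially the same route as the paper: the paper likewise tests \eqref{eq-4.13}--\eqref{eq-4.14} against the auxiliary multiplier (there scaled as $\lam^2\bar\varphi$, $\lam^2\bar\psi$), substitutes the equations of \eqref{aux2} to produce the exact cancellation of the $\lam^2$-, $H^1$- and coupling pairings, and disposes of the artificial damping term via Lemmas \ref{lem.2} and \ref{lem-4.2} and of the $b(x)v_x$ term via \eqref{eq-4.9} and \eqref{bdd}. The only difference is your normalization (working at the level $\int|u|^2=o(\lam^{-2})$ rather than $\int|\lam u|^2=o(1)$), which is immaterial.
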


\begin{proof}
The proof of this Lemma is divided into two steps.\\
\textbf{Step 1.}\\
Multiplying equation \eqref{eq-4.13} by $\lam^{2}\bar{\varphi}$, integrate over $(0,L)$, and using Equation \eqref{bdd} and the facts that $u$ is uniformly bounded in $L^2(0,L)$ and $\|F\|_{\mathcal{H}}=\|(f_1,g_1,f_2,g_2)\|_{\mathcal{H}}=o(1)$, we get
\begin{equation}\label{1x7}
\int_0^L \left(\lam^2\bar{\varphi}+a\bar{\varphi}_{xx}\right)\lam^{2}udx-\int_0^L\lam^{2}b(x)v_{x}\bar{\varphi}_{x}dx-\int_0^L i\lam c(x)y\bar{\varphi}dx=o(\lam^{-1}).
\end{equation}
Using Equations \eqref{eq-4.9} and \eqref{bdd}, we get 
\begin{equation}\label{2x7}
\int_0^L\lam^{2}b(x)v_{x}\bar{\varphi}_{x}dx=o(1).
\end{equation}
Combining Equations \eqref{1x7} and \eqref{2x7}, we obtain
\begin{equation}\label{x7}
\int_0^L \left(\lam^2\bar{\varphi}+a\bar{\varphi}_{xx}\right)\lam^{2}udx-\int_0^L i\lam^3 c(x)y\bar{\varphi}dx=o(1).
\end{equation}
From System \eqref{aux2}, we have
\begin{equation}\label{bar1}
\lam^2\bar{\varphi}+a\bar{\varphi}_{xx}=-i\lam\left({\mathds{1}}_{(\alpha_{2},\alpha_{3}-2\varepsilon)}\right)(x)\bar{\varphi}-i\lam c(x)\bar{\psi}+\bar{u}.
\end{equation}
Substituting \eqref{bar1} in \eqref{x7}, we get
\begin{equation}\label{1eq11}
\int_0^L \left|\lam u\right|^2dx-\int_0^L i\lam^3 \left({\mathds{1}}_{(\alpha_{2},\alpha_{3}-2\varepsilon)}\right)(x)u\bar{\varphi}dx-\int_0^L i\lam^3 c(x)\bar{\psi}udx-\int_0^L i\lam^3 c(x)y\bar{\varphi}dx=o(1).
\end{equation}
Using Remark \ref{remark-4.1}, Lemma \ref{lem.2} and Equation \eqref{bdd}, we obtain
\begin{equation}\label{2eq11}
\int_0^L i\lam^3 \left({\mathds{1}}_{(\alpha_{2},\alpha_{3}-2\varepsilon)}\right)(x)u\bar{\varphi}dx=o(1).
\end{equation}
Inserting Equation \eqref{2eq11} in Equation  \eqref{1eq11}, we get
\begin{equation}\label{eq11}
\int_0^L \left|\lam u\right|^2dx-\int_0^L i\lam^3 c(x)\bar{\psi}udx-\int_0^L i\lam^3 c(x)y\bar{\varphi}dx=o(1).
\end{equation}
%%%%%%%%%%%%%%%%%%%%%%%%%%%%%%%%%%%%%%%%%%%%%%%%%%%%%%%%%%%%%%%%%%%%%%%%%%%%%%%%%%%%%%%%%%%%%%%%%%
\textbf{Step 2.}\\
Multiplying equation \eqref{eq-4.14} by $\lam^{2}\bar{\psi}$, integrate over $(0,L)$, and using Equation \eqref{bdd} and the facts that $y$ is uniformly bounded in $L^2(0,L)$ and $\|F\|_{\mathcal{H}}=\|(f_1,g_1,f_2,g_2)\|_{\mathcal{H}}=o(1)$, we get
\begin{equation}\label{x8}
\int_0^L\left(\la^2\bar{\psi}+\bar{\psi}_{xx}\right)\la^2ydx+\int_0^L i\lam c(x)u\bar{\psi}dx=o(\lam^{-1}).
\end{equation}
From System \eqref{aux2}, we have
\begin{equation}\label{bar2}
\lam^2\bar{\psi}+a\bar{\psi}_{xx}=-i\left({\mathds{1}}_{(\alpha_{2},\alpha_{3}-2\varepsilon)}\right)(x)\bar{\psi}+i\lam c(x)\bar{\varphi}+\bar{y}.
\end{equation}
Substituting \eqref{bar2} in \eqref{x8}, we get
\begin{equation}\label{1eq12}
\int_0^L \left|\lam y\right|^2dx-\int_0^L i\lam^3 \left({\mathds{1}}_{(\alpha_{2},\alpha_{3}-2\varepsilon)}\right)(x)y\bar{\psi}dx+\int_0^L i\lam^3 c(x)\bar{\varphi}ydx+\int_0^L i\lam^3 c(x)u\bar{\psi}dx=o(\lam^{-1}).
\end{equation}
Using Remark \ref{remark-4.1}, Lemma \ref{lem-4.2} and Equation \eqref{bdd}, we obtain
\begin{equation}\label{2eq12}
\int_0^L i\lam^3 \left({\mathds{1}}_{(\alpha_{2},\alpha_{3}-2\varepsilon)}\right)(x)y\bar{\psi}dx=o(1).
\end{equation}
Inserting Equation \eqref{2eq12} in Equation \eqref{1eq12}, we get 
%Using Lemma  and equation \eqref{bdd}, we get
\begin{equation}\label{eq12}
\int_0^L \left|\lam y\right|^2dx+\int_0^L i\lam^3 c(x)\bar{\varphi}ydx+\int_0^L i\lam^3 c(x)u\bar{\psi}dx=o(1).
\end{equation}
Finally, summing up equations \eqref{eq11} and \eqref{eq12} we get
\begin{equation*}
\int_0^L \left|\lam u\right|^2dx=o(1)\quad\mbox{and}\quad\int_0^L \left|\lam y\right|^2dx=o(1).
\end{equation*}
Hence,
\begin{equation}
\int_0^L \left|v\right|^2dx=o(1) \quad \text{and} \quad \int_0^L \left|z\right|^2dx=o(1).
\end{equation}
Then, the proof has been completed. 
\end{proof}

\begin{lemma}\label{final2}
{\rm The solution $(u,v,y,z)\in D(\mathcal{A})$ of the \eqref{eq-4.4}-\eqref{eq-4.7}  satisfies the following asymptotic behavior estimations
\begin{equation}
\int_0^L \left| u_x\right|^2dx=o(1)\quad\mbox{and}\quad\int_0^L \left| y_x\right|^2dx=o(1).
\end{equation}}
\end{lemma}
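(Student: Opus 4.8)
The plan is to recover the two missing ``potential energy'' terms $\int_0^L|u_x|^2dx$ and $\int_0^L|y_x|^2dx$ directly from the reduced equations \eqref{eq-4.13}--\eqref{eq-4.14}, now that Lemma \ref{final1} furnishes full control of the ``kinetic'' quantities through \eqref{lamu}--\eqref{lamy}. First I would multiply \eqref{eq-4.13} by $\bar{u}$, integrate over $(0,L)$, and integrate the elliptic term $(au_x+b(x)v_x)_x\bar u$ by parts; since $u\in H_0^1(0,L)$ the boundary contributions vanish, leaving
\begin{equation*}
a\int_0^L|u_x|^2dx=\int_0^L|\la u|^2dx-\int_0^Lb(x)v_x\bar u_xdx-\int_0^Li\la c(x)y\bar udx-\int_0^LF_1\bar udx.
\end{equation*}

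Then I would estimate the four terms on the right as $o(1)$. The first is $o(1)$ directly by \eqref{lamu}. The damping cross-term is handled by Cauchy--Schwarz over $(\alpha_1,\alpha_3)$ (where $b(x)=b_0$) combined with \eqref{eq-4.9} and \eqref{eq-4.12}, giving a bound of order $o(\la^{-1})\,o(\la^{-2})=o(1)$. The coupling term is bounded by $c_0\|\la y\|_{L^2(0,L)}\|u\|_{L^2(0,L)}$, which is $o(1)\cdot O(\la^{-1})$ by \eqref{lamy} and \eqref{eq-4.10}. For the source term, the definition \eqref{F1F2} together with $\|F\|_{\HH}=o(1)$ yields $\|F_1\|_{L^2(0,L)}=o(\la^{-1})$, whence $|\int_0^LF_1\bar udx|\le\|F_1\|_{L^2}\|u\|_{L^2}=o(\la^{-2})$. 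Collecting these estimates gives $a\int_0^L|u_x|^2dx=o(1)$, and since $a>0$ the first assertion follows.

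For the second assertion I would repeat the computation symmetrically: multiply \eqref{eq-4.14} by $\bar{y}$, integrate by parts the term $y_{xx}\bar y$ (again with no boundary contribution, as $y\in H_0^1(0,L)$), and arrive at
\begin{equation*}
\int_0^L|y_x|^2dx=\int_0^L|\la y|^2dx+\int_0^Li\la c(x)u\bar ydx-\int_0^LF_2\bar ydx.
\end{equation*}
Here $\int_0^L|\la y|^2dx=o(1)$ by \eqref{lamy}, the coupling term is at most $c_0\|\la u\|_{L^2}\|y\|_{L^2}=o(1)\cdot O(\la^{-1})$ by \eqref{lamu} and \eqref{eq-4.11}, and the source term is $o(\la^{-2})$ exactly as before, so $\int_0^L|y_x|^2dx=o(1)$.

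I expect the only point requiring genuine care — rather than a real obstacle — to be the bookkeeping of orders in the damping cross-term $\int_0^Lb(x)v_x\bar u_xdx$, which is the single place where both localized gradient bounds \eqref{eq-4.9} and \eqref{eq-4.12} must be combined; every other term is controlled by one application of Cauchy--Schwarz against an already-established $o(1)$ or $O(\la^{-1})$ bound, the substantive work having been done in Lemma \ref{final1}. Once both estimates hold, they combine with $\|v\|_{L^2(0,L)}=o(1)$ and $\|z\|_{L^2(0,L)}=o(1)$ from Lemma \ref{final1} to give $\|U\|_{\HH}=o(1)$, contradicting the normalization $\|U\|_{\HH}=1$ and thereby establishing condition \eqref{H2}.
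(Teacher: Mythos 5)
Your proposal is correct and follows essentially the same route as the paper: multiply \eqref{eq-4.13} by $\bar{u}$ (resp. \eqref{eq-4.14} by $\bar{y}$), integrate by parts, and absorb the remaining terms using \eqref{lamu}, \eqref{lamy}, \eqref{eq-4.9}, \eqref{eq-4.12}, \eqref{eq-4.10}--\eqref{eq-4.11} and $\|F\|_{\HH}=o(1)$. The paper's version is just a more terse rendering of the same term-by-term bookkeeping you carry out explicitly.
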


\begin{proof}
Multiplying \eqref{eq-4.13} by $\bar{u}$ integrate over $(0,L)$, using the fact that $\|F\|_{\mathcal{H}}=\|(f_1,g_1,f_2,g_2)\|_{\mathcal{H}}=o(1)$ and $u$ is uniformly bounded in $L^2(0,L)$, we get
\begin{equation}
\int_0^L \left|\lam u\right|^2dx-\int_0^L a\left| u_x\right|^2dx-\int_0^L  b(x)v_x \bar{u}_{x}dx-\int_0^L i\lam c(x)y\bar{u}dx=o(\lam^{-2}).
\end{equation}
Using equations \eqref{eq-4.9} and \eqref{lamu}, we get
\begin{equation*}
\int_0^L \left| u_x\right|^2dx=o(1).
\end{equation*}
Similarly, multiply \eqref{eq-4.14} by $\bar{y}$ and integrate, we get
\begin{equation*}
\int_0^L \left| y_x\right|^2dx=o(1).
\end{equation*}
The proof has been completed.
\end{proof}

\noindent \textbf{Proof of Theorem \ref{Theorem-4.1}.}. 
Consequently, from the results of Lemmas \ref{final1} and \ref{final2}, we obtain  
\begin{equation*}
\int_0^L\left(|v|^2+|z|^2+a\,|u_x|^2+ |y_x|^2\right)dx
=o\left(1\right).
\end{equation*}
Hence $\|U\|_{\HH}=o(1)$, which contradicts \eqref{eq-4.2}. Consequently, condition ${\rm (H2)}$ holds. This implies, from Theorem \ref{bt}, the energy decay estimation \eqref{EnergyGeneral}. The proof is thus complete.

%%%%%%%%%%%%%%%%%%%%%%%%%%%%%%%%%%%%%%%%%%%%%%%%%%%%%%%%%%%%%%%%%%%%%%%%%%%%%%%%%%%%%%%%%%%%%%%%%%%%%%%%%%%%%%%%%%%%%%%%%%%%%%%%%%%%%%%%%%%%%%%%%%%%%%%%%%%%%%%%%%%%%%%%%%%%%%%%%%%%%%
\section{Appendix}\label{Appendix}
%%%%%%%%%%%%%%%%%%%%%%%%%%%%%%%%%%%%%%%%%%%%%%%%%%%%%%%%%%%%%%%%%%%%%%%%%%%%%%%%%%%%%%%%%%%%%%%%%%%%%%%%%%%%%%%%
\subsection{Exponential stability of locally coupled wave equations with non-smooth coefficients}\label{EXP-AUX}
\noindent We consider the following auxiliary problem, 
\begin{equation}\label{aux-prb}
\left\{\begin{array}{l}
\varphi_{tt}-a\varphi_{xx}+\left({\mathds{1}}_{(\alpha_{2},\alpha_{3}-2\varepsilon)}\right)(x) \varphi_{t}+c(x)\psi_{t}=0,  \hspace{1cm}(x,t)\in (0,L)\times\R^+,\\ \\
\psi_{tt}-\psi_{xx}+\left({\mathds{1}}_{(\alpha_{2},\alpha_{3}-2\varepsilon)}\right)(x) \psi_{t}-c(x)\varphi_{t}=0, \hspace{1.2cm}(x,t)\in (0,L)\times\R^+,\\  \\
\varphi(0,t)=\varphi(L,t)=0,\hspace{5cm} t>0,\\ \\
\psi(0,t)=\psi(L,t)=0,\hspace{5cm} t>0.
\end{array}\right.
\end{equation}
Since, we have a system of coupled wave equations with two interior damping acting on a part of the interval $(0,L)$, then system \eqref{aux-prb} is exponentially stable in the associated energy space $\mathcal{H}_{a}=\left(H_0^1(0,L)\times L^2(0,L)\right)^2$. In this section, our aim is to show that the auxiliary problem \eqref{aux-prb} is uniformly stable. The energy of System  \eqref{aux-prb} is given by 
\begin{equation*}
E_a(t)=\frac{1}{2}\left(\int_0^L\abs{\varphi_t}^2+a\abs{\varphi_x}^2+\abs{\psi_t}^2+\abs{\psi_x}^2dx\right)
\end{equation*}
and by a straightforward calculation, we have 
\begin{equation*}
\frac{d}{dt}E_a(t)=-\int_0^L\left({\mathds{1}}_{(\alpha_{2},\alpha_{3}-2\varepsilon)}\right)(x)\abs{\varphi_t}^2dx-\int_0^L\left({\mathds{1}}_{(\alpha_{2},\alpha_{3}-2\varepsilon)}\right)(x)\abs{\psi_t}^2dx\leq 0.
\end{equation*}
Thus, System \eqref{aux-prb} is dissipative in the sense that its energy is a non-increasing function with respect to the time variable $t$. The auxiliary energy Hilbert space of Problem \eqref{aux-prb} is given by 
\begin{equation*}
\mathcal{H}_a=\left(H_0^1(0,L)\times L^2(0,L)\right)^2.
\end{equation*}
We denote by $\eta=\varphi_t$ and $\xi=\psi_t$.
The auxiliary energy space $\mathcal{H}_a$ is endowed with the following norm
$$
\|\Phi\|_{\mathcal{H}_a}^2=\|\eta\|^2+a\|\varphi_x\|^2+\|\xi\|^2+\|\psi_x\|^2,
$$
where $\|\cdot\|$ denotes the norm of $L^2(0,L)$. We define the unbounded linear operator $\mathcal{A}_a$ by 
\begin{equation}\label{dofao}
D(\mathcal{A}_a)=\left((H^2(0,L)\cap H^1_0(0,L))\times H^1_0(0,L)\right)^2,
\end{equation}
and
\begin{equation}\label{auxiliary-operator}
\mathcal{A}_a(\varphi,\eta,\psi,\xi)=(\eta,a\varphi_{xx}-\left({\mathds{1}}_{(\alpha_{2},\alpha_{3}-2\varepsilon)}\right)(x)\eta-c(x)\xi,\xi,\psi_{xx}-\left({\mathds{1}}_{(\alpha_{2},\alpha_{3}-2\varepsilon)}\right)(x)\xi+c(x)\eta)^{\top}.
\end{equation}
If $\Phi=\left(\varphi,\psi,\eta,\xi\right)$ is the state of System \eqref{aux-prb}, then this system is tranformed into a first order evolution equation on the auxiliary Hilbert space $\mathcal{H}_a$ given by
\begin{equation*}
\Phi_t=\mathcal{A}_a\Phi,\quad \Phi(0)=\Phi_0,
\end{equation*}
where $\Phi_0=\left(\varphi_0,\eta_0,\psi_0,\xi_0\right)$. It is easy to see that $\mathcal{A}_a$ is m-dissipative and generates a $C_0-$semigroup of contractions $\left(e^{t\mathcal{A}_a}\right)_{t\geq 0}$.  
%%%%%%%%%%%%%
\begin{theoreme}\label{strong stability-aux}
The $C_0-$semigroup of contractions $(e^{t\mathcal{A}_{a}})_{t\geq 0}$ is strongly stable on $\HH_{a}$, i.e. for all $U_0\in \HH_{a}$,$\displaystyle{\lim_{t\to +\infty}\|e^{t\AA_{a}}U_0\|_{\HH_{a}}=0}$ .
\end{theoreme}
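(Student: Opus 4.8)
The plan is to invoke the Arendt--Batty criterion (Theorem \ref{arendtbatty}) exactly as in the proof of Theorem \ref{strong stability}: since $\mathcal{A}_a$ is m-dissipative and hence generates a $C_0$-semigroup of contractions, strong stability will follow once we show that $\sigma(\mathcal{A}_a)\cap i\R$ is at most countable and that $\mathcal{A}_a$ carries no eigenvalue on the imaginary axis. I would first record that the embedding $D(\mathcal{A}_a)\hookrightarrow \mathcal{H}_a$ is compact (by the Rellich theorem, since $D(\mathcal{A}_a)=((H^2(0,L)\cap H^1_0(0,L))\times H^1_0(0,L))^2$). Combined with the fact that m-dissipativity puts the open right half-plane in $\rho(\mathcal{A}_a)$, this makes the resolvent of $\mathcal{A}_a$ compact; consequently $\sigma(\mathcal{A}_a)$ is discrete and consists only of eigenvalues of finite multiplicity. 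In particular $\sigma(\mathcal{A}_a)\cap i\R$ is at most countable, and the whole problem reduces to proving $\ker(i\la I-\mathcal{A}_a)=\{0\}$ for every $\la\in\R$.

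For the kernel computation I would follow the scheme of Lemma \ref{ker}. Let $\Phi=(\varphi,\eta,\psi,\xi)\in D(\mathcal{A}_a)$ satisfy $\mathcal{A}_a\Phi=i\la\Phi$. The case $\la=0$ is elementary: the four equations decouple, $\eta=\xi=0$ and $a\varphi_{xx}=\psi_{xx}=0$, so the Dirichlet conditions force $\Phi=0$. For $\la\neq0$, taking the real part of $\langle \mathcal{A}_a\Phi,\Phi\rangle_{\mathcal{H}_a}=i\la\|\Phi\|^2_{\mathcal{H}_a}$ and using the dissipation identity gives
\[
0=\Re\langle \mathcal{A}_a\Phi,\Phi\rangle_{\mathcal{H}_a}
=-\int_0^L \mathds{1}_{(\alpha_2,\alpha_3-2\varepsilon)}(x)\left(|\eta|^2+|\xi|^2\right)dx,
\]
so that $\eta=\xi=0$ on $(\alpha_2,\alpha_3-2\varepsilon)$; since $\eta=i\la\varphi$ and $\xi=i\la\psi$ with $\la\neq0$, we also get $\varphi=\psi=0$ there. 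Substituting $\eta=i\la\varphi$ and $\xi=i\la\psi$ into the remaining two equations reduces the eigenvalue problem to the homogeneous coupled system
\[
\la^2\varphi+a\varphi_{xx}-i\la\,\mathds{1}_{(\alpha_2,\alpha_3-2\varepsilon)}(x)\varphi-i\la c(x)\psi=0,\qquad
\la^2\psi+\psi_{xx}-i\la\,\mathds{1}_{(\alpha_2,\alpha_3-2\varepsilon)}(x)\psi+i\la c(x)\varphi=0,
\]
i.e. the homogeneous version of \eqref{aux2}, supplemented with Dirichlet data at $0$ and $L$.

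The main work is the unique continuation from the interval $(\alpha_2,\alpha_3-2\varepsilon)$, where $\varphi=\psi\equiv0$, to all of $(0,L)$; here it is noticeably easier than in Lemma \ref{ker}, because the characteristic-function damping already furnishes a full subinterval on which the solution vanishes. Since $(\varphi,\psi)\in (H^2(0,L))^2\subset (C^1([0,L]))^2$, the vector $(\varphi,\varphi_x,\psi,\psi_x)$ is continuous across the breakpoints $\alpha_2$, $\alpha_3-2\varepsilon$ and $\alpha_4$, and it vanishes at $\alpha_2$ and at $\alpha_3-2\varepsilon$. On each of the subintervals $(0,\alpha_2)$, $(\alpha_3-2\varepsilon,\alpha_4)$ and $(\alpha_4,L)$ the coefficients $\mathds{1}_{(\alpha_2,\alpha_3-2\varepsilon)}$ and $c$ are constant, so the system can be written as a first-order linear ODE $Y'=AY$ in $Y=(\varphi,\varphi_x,\psi,\psi_x)^{\top}$ with constant matrix $A$; vanishing Cauchy data at one endpoint then forces $Y\equiv0$ on that subinterval. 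Propagating rightward from $\alpha_3-2\varepsilon$ across $(\alpha_3-2\varepsilon,\alpha_4)$ and $(\alpha_4,L)$, and leftward from $\alpha_2$ across $(0,\alpha_2)$, yields $\varphi=\psi\equiv0$ on $(0,L)$, hence $\Phi=0$. Thus $i\la I-\mathcal{A}_a$ is injective for every $\la\in\R$, and together with the countability of $\sigma(\mathcal{A}_a)\cap i\R$ the Arendt--Batty theorem gives the strong stability of $(e^{t\mathcal{A}_a})_{t\geq0}$.

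I expect no genuine obstacle of the Carleman / unique-continuation type to arise, precisely because the damping provides an entire interval of vanishing from which to propagate. The only points requiring care are bookkeeping: confirming that the resolvent is compact so that $\sigma(\mathcal{A}_a)\cap i\R$ is discrete (and in particular that $\la=0$ is not an eigenvalue), and verifying that $\alpha_2<\alpha_3-2\varepsilon<\alpha_4$ genuinely splits $(0,L)$ into subintervals with constant coefficients on which the ODE-uniqueness argument applies.
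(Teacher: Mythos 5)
Your argument is correct, and it reaches the conclusion by a genuinely lighter route than the one the paper points to. The paper's proof of this theorem simply says ``proceed with the same argument as Subsection \ref{Section-2.2}'', i.e.\ reprove the analogues of Lemma \ref{ker} (injectivity of $i\la I-\AA_a$) and Lemma \ref{surjec} (surjectivity via Lax--Milgram plus a Fredholm perturbation argument), and then conclude $i\R\subset\rho(\AA_a)$ by the closed graph theorem. You replace the entire surjectivity step by the observation that $D(\AA_a)=\left((H^2(0,L)\cap H^1_0(0,L))\times H^1_0(0,L)\right)^2$ embeds compactly into $\HH_a$, so the resolvent is compact and $\sigma(\AA_a)$ is a discrete set of eigenvalues; this is legitimate precisely because the auxiliary problem has viscous rather than Kelvin--Voigt damping (for the original operator $\AA$ this shortcut is unavailable, since $u$ need not lie in $H^2$ and the domain is not compactly embedded --- which is why the paper needs Lemma \ref{surjec} at all). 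Your kernel computation is also simpler than the one in Lemma \ref{ker}: there the dissipation only yields $v_x=0$ on the damped region, forcing the authors into the case analysis with the matrices $M_1,M_2,M_3$, whereas here the viscous damping gives $\eta=\xi=0$, hence $\varphi=\psi=0$, on the whole interval $(\alpha_2,\alpha_3-2\varepsilon)$, and propagation by ODE uniqueness with vanishing Cauchy data across the constant-coefficient subintervals finishes the job (your determinant-free argument is in fact what the nonvanishing of $\det M_i$ encodes). The one point you rightly flag is that the condition $\varepsilon<\frac{\alpha_3-\alpha_1}{4}$ does not by itself guarantee $\alpha_2<\alpha_3-2\varepsilon$; this nondegeneracy is implicitly assumed throughout Section \ref{Section-4} of the paper as well, so it is a shared hypothesis rather than a gap in your proof.
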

\begin{proof}
Following  Arendt and Batty Theorem in  \cite{Arendt01}, we have to prove the following two conditions 
 %%%%%%%%%%%%%%%%
 \begin{enumerate}
 \item[1.]  $\mathcal{A}$ has no pure imaginary eigenvalues,
  \item[2.]  $\sigma\left(\mathcal{A}\right)\cap i\mathbb{R}$ is countable.
 \end{enumerate}
In order to prove these two conditions we proceed with the same argument of subsection \ref{Section-2.2} and we reach the desired result.  
\end{proof}

\noindent Now, we present the main result of this section
\begin{theoreme}\label{auxiliary-problem-Exp}
The $C_0-$semigroup of contractions $\left(e^{t\mathcal{A}_a}\right)_{t\geq 0}$ is exponentially  stable, i.e. there exists constants $M\geq 1$ and $\tau>0$ independent of $\Phi_0$ such that 
$$
\left\|e^{t\mathcal{A}_a}\Phi_0\right\|_{\mathcal{H}_a}\leq Me^{-\tau t}\|\Phi_0\|_{\mathcal{H}_a},\qquad t\geq 0.
$$
\end{theoreme}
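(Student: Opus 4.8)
The plan is to invoke the frequency-domain characterization of exponential stability for contraction semigroups (the Huang--Pr\"uss criterion, Theorem \ref{hp}): since $(e^{t\mathcal{A}_a})_{t\ge 0}$ is a $C_0$-semigroup of contractions, it is exponentially stable if and only if
\begin{equation*}
i\R\subset\rho(\mathcal{A}_a)\qquad\text{and}\qquad \sup_{\lambda\in\R}\bigl\|(i\lambda I-\mathcal{A}_a)^{-1}\bigr\|_{\mathcal{L}(\mathcal{H}_a)}<+\infty .
\end{equation*}
The first condition is furnished by the strong-stability analysis: the argument of Theorem \ref{strong stability-aux}, mimicking Lemmas \ref{ker} and \ref{surjec}, shows that $\mathcal{A}_a$ has no purely imaginary eigenvalue and that $i\lambda I-\mathcal{A}_a$ is bijective for every real $\lambda$, hence $i\R\subset\rho(\mathcal{A}_a)$. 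It therefore remains to establish the uniform resolvent bound, which I would prove by contradiction.

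Assume the bound fails. Then there are sequences $\lambda_n\in\R$ with $|\lambda_n|\to+\infty$ and $\Phi_n=(\varphi_n,\eta_n,\psi_n,\xi_n)\in D(\mathcal{A}_a)$ with $\|\Phi_n\|_{\mathcal{H}_a}=1$ such that $(i\lambda_n I-\mathcal{A}_a)\Phi_n=F_n\to0$ in $\mathcal{H}_a$. Dropping the index $n$, the resolvent equation reads $i\lambda\varphi-\eta=f_1$ and $i\lambda\eta-a\varphi_{xx}+\mathds{1}_{(\alpha_2,\alpha_3-2\varepsilon)}\eta+c(x)\xi=g_1$, together with the analogous pair for $(\psi,\xi)$, where $(f_1,g_1,f_2,g_2)\to0$ in the energy norm. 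Taking the real part of $\langle F,\Phi\rangle_{\mathcal{H}_a}$ and using $\Re\langle i\lambda\Phi,\Phi\rangle=0$ together with the dissipativity identity for $\mathcal{A}_a$ yields at once the localized estimate
\begin{equation*}
\int_{\alpha_2}^{\alpha_3-2\varepsilon}\bigl(|\eta|^2+|\xi|^2\bigr)\,dx=o(1),
\end{equation*}
which, combined with $\eta=i\lambda\varphi-f_1$ and $\xi=i\lambda\psi-f_2$, gives $\int_{\alpha_2}^{\alpha_3-2\varepsilon}(|\lambda\varphi|^2+|\lambda\psi|^2)\,dx=o(1)$.

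The heart of the argument — and the step I expect to be the main obstacle — is to propagate this smallness, known only on the damping subinterval, to the whole interval $(0,L)$, so as to conclude $\|\Phi\|_{\mathcal{H}_a}=o(1)$ and contradict $\|\Phi\|_{\mathcal{H}_a}=1$. Here I would exploit that in one space dimension every subinterval satisfies the geometric control condition, realizing this through a piecewise multiplier technique. First, multiplying each second-order equation by $\theta\bar\varphi$ (resp.\ $\theta\bar\psi$), with a cutoff $\theta\in C_0^\infty(\alpha_2,\alpha_3-2\varepsilon)$ equal to $1$ on a smaller subinterval, converts the velocity estimate into $\int|\varphi_x|^2=o(1)$ and $\int|\psi_x|^2=o(1)$ over that subinterval. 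Then, using multipliers of the form $q(x)\varphi_x$ and $q(x)\psi_x$ with a suitably chosen $C^1$ weight $q$ that is monotone away from the damped zone and accounts for the two distinct wave speeds $\sqrt a$ and $1$, I would transport the energy estimate across $(0,\alpha_2)$ and $(\alpha_3-2\varepsilon,L)$; the Dirichlet conditions $\varphi(0)=\varphi(L)=\psi(0)=\psi(L)=0$ kill the boundary contributions at the endpoints, while the interface terms are absorbed into quantities already shown to be $o(1)$. The coupling terms, of the form $i\lambda c(x)\psi$ and $i\lambda c(x)\varphi$, are the delicate point, since $\operatorname{supp}c=(\alpha_2,\alpha_4)$ extends beyond the damped region; they must be controlled only after the velocity estimates have been propagated over the whole support of $c$, which forces the two multiplier identities to be treated separately (rather than summed prematurely, as $a\neq1$) and handled in the correct order so that smallness gained for one field is not destroyed when passing to the other.

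Collecting all the local estimates yields $\int_0^L\bigl(|\eta|^2+|\xi|^2+a|\varphi_x|^2+|\psi_x|^2\bigr)\,dx=o(1)$, that is $\|\Phi\|_{\mathcal{H}_a}=o(1)$, which is the sought contradiction. With both conditions of the Huang--Pr\"uss criterion verified, the semigroup $(e^{t\mathcal{A}_a})_{t\ge0}$ is exponentially stable, completing the proof.
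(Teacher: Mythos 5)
Your overall strategy coincides with the paper's own proof: the Huang--Pr\"uss criterion reduces the theorem to $i\R\subset\rho(\mathcal{A}_a)$ together with a uniform resolvent bound; the latter is attacked by contradiction; the dissipation identity gives $\int_{\alpha_2}^{\alpha_3-2\varepsilon}(|\eta|^2+|\xi|^2)\,dx=o(1)$; a zero-order cutoff multiplier ($\theta\bar\varphi$, $\theta\bar\psi$) upgrades this to smallness of $\varphi_x,\psi_x$ on a slightly smaller subinterval; and first-order multipliers $h(x)\bar\varphi_x$, $h(x)\bar\psi_x$ with piecewise weights ($h=x\tilde\theta$ on the left, $h=(x-L)\hat\theta$ on the right, as in Lemmas \ref{Fourth-Estimation-aux} and \ref{6-Estimation-aux}) transport the estimate to $(0,\alpha_2+\delta)$ and $(\alpha_3-2\varepsilon-\delta,L)$. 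Up to that point your proposal reproduces the paper's argument.

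The gap is in the step you yourself flag as delicate: the coupling terms on the undamped part of the support of $c$, i.e.\ on $(\alpha_3-2\varepsilon,\alpha_4)$. You prescribe that the two first-order multiplier identities ``must be treated separately (rather than summed prematurely, as $a\neq1$)'', and this is exactly the opposite of what makes the argument close. Treated separately, the coupling contribution of the $\varphi$-identity on that region is $2c_0\Re\bigl(\int h\,\xi\bar\varphi_x\,dx\bigr)=2c_0\Re\bigl(i\lambda\int h\,\psi\bar\varphi_x\,dx\bigr)+o(1)$, and since on $(\alpha_3-2\varepsilon,\alpha_4)$ one only knows $\|\lambda\psi\|=O(1)$ and $\|\varphi_x\|=O(1)$ (no damping there, and no smallness has yet been propagated), Cauchy--Schwarz yields only $O(1)$, which is useless for a contradiction argument aiming at $o(1)$. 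No ordering of the two identities repairs this, because neither field is small on that region and smallness cannot be propagated across it without first controlling precisely these terms. The paper's Lemma \ref{Third-Estimation-aux} instead \emph{sums} the two identities with the same weight $h$ (the unequal speeds are harmless: $a|\varphi_x|^2$ and $|\psi_x|^2$ simply both appear under the weight $h'$), so that the skew-symmetry of the coupling ($+c\xi$ in one equation, $-c\eta$ in the other) makes the two coupling terms combine, after substituting $\xi=i\lambda\psi-f_3$ and $\eta=i\lambda\varphi-f_1$, into $2c_0\Re\bigl(i\lambda\int h\,(\psi\bar\varphi)_x\,dx\bigr)+o(1)$; integration by parts then leaves only boundary contributions at $\alpha_3-2\varepsilon-\delta$ and $\alpha_4$, and these are $o(1)$ by the pointwise bounds $|\eta(s)|,|\xi(s)|=O(1)$ of Lemma \ref{5-Estimation-aux} combined with $|\varphi(s)|=O(\lambda^{-1})$, $|\psi(s)|=O(\lambda^{-1})$. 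Your proof needs this summation-plus-integration-by-parts mechanism, together with the auxiliary pointwise boundary estimates, to handle the region where the coupling lives but the damping does not.
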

\noindent According to Huang \cite{Huang01} and Pruss \cite{pruss01}, we have to check if the following conditions hold:
\begin{equation}\label{H3}\tag{${\rm H3}$}
 i\mathbb{R}\subseteq \rho\left(\mathcal{A}_a\right)
\end{equation}
and 
\begin{equation}\label{H4}\tag{${\rm H4}$}
\displaystyle{\sup_{\la\in \R}\|\left(i\la I-\mathcal{A}_a\right)^{-1}\|_{\mathcal{L}\left(\mathcal{H}_a\right)}=O(1).}
\end{equation}
%%%%%%%%
\begin{comment}
\begin{theoreme}\label{HP}
Let $\left(S(t)\right)_{t\geq 0}$ be a $C_0-$semigroup of contractions on $H$ and $A$ be its infinitesimal generator. Then, $\left(S(t)\right)_{t\geq 0}$ is exponentially stable if and only if 
$$
i\mathbb{R}\subseteq \rho(A)\quad \text{and}\quad \displaystyle{\lim_{\la\in \R}\sup_{\abs{\la}\to+\infty}\|(i\la I-A)^{-1}\|_{\mathcal{L}\left(\mathcal{H}\right)}}<+\infty.
$$
\end{theoreme}
\end{comment}
%%%%%%%%%%%%%
\noindent By using the same argument of Lemma \ref{ker}, the operator $\mathcal{A}_a$ has no pure imaginary eigenvalues. Then, condition \eqref{H3} holds. We will prove condition \eqref{H4} using an argument of contradiction. Indeed, suppose there exists 
\begin{equation*}
\left\{\left(\la_n,\Phi_n=\left(\varphi_n,\eta_n,\psi_n,\xi_n\right)\right)\right\}_{n\geq 1}\subset \R_+^{\ast}\times D\left(\mathcal{A}_a\right)
\end{equation*}
such that 
\begin{equation}\label{Phin}
\la_n\to +\infty\quad \text{and}\quad \|\Phi_n\|_{\mathcal{H}_a}=1
\end{equation}
and there exists a sequence $F_n=\left(f_{1,n},f_{2,n},f_{3,n},f_{4,n}\right)\in \mathcal{H}_a$ such that 
\begin{equation}\label{exp-app1}
\left(i\la_nI-\mathcal{A}_a\right)\Phi_n=F_n\to 0\quad \text{in}\quad \mathcal{H}_a.
\end{equation}
Detailing \eqref{exp-app1}, we get the following system  
\begin{eqnarray}
i\la \varphi_n-\eta_n &=&f_{1,n}\quad \text{in}\quad H_0^1(0,L),\label{exp-app2}\\
i\la \eta_n-a\left(\varphi_n\right)_{xx}+\left({\mathds{1}}_{(\alpha_{2},\alpha_{3}-2\varepsilon)}\right)(x)\eta_n+c(x)\xi_n &=&f_{2,n}\quad \text{in}\quad L^2(0,L),\label{exp-app3}\\
i\la \psi_n-\xi_n &=&f_{3,n}\quad \text{in}\quad H_0^1(0,L),\label{exp-app4}\\
i\la \xi_n-\left(\psi_n\right)_{xx}+\left({\mathds{1}}_{(\alpha_{2},\alpha_{3}-2\varepsilon)}\right)(x)\xi_n-c(x)\eta_n &=&f_{4,n}\quad\text{in}\quad L^2(0,L)\label{exp-app5}. 
\end{eqnarray}
In what follows, we will check the condition ${\rm (H4)}$ by finding a contradiction with \eqref{Phin} such as $\|\Phi_n\|_{\mathcal{H}_a} = o(1)$. For clarity,
we divide the proof into several lemmas. From now on, for simplicity, we drop the index n.
%%%%%%%%%%%%%%%%%%%%%
% First Estimation
%%%%%%%%%%%%%%%%%%%%%
\begin{lemma}\label{First-Estimation-aux}
The solution $\left(\varphi,\eta,\psi,\xi\right)\in D\left(\mathcal{A}_a\right)$ of Equations \eqref{exp-app2}-\eqref{exp-app5} satisfies the following asymptotic behavior estimation
\begin{equation*}
\int_{\alpha_2}^{\alpha_3-2\varepsilon}\abs{\eta}^2dx=o(1)\quad \text{and}\quad \int_{\alpha_2}^{\alpha_3-2\varepsilon}\abs{\xi}^2dx=o(1).
\end{equation*}
\end{lemma}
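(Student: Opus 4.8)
The plan is to read off the two $L^2$–estimates on the damping region directly from the dissipativity of $\mathcal{A}_a$, by pairing the resolvent equation \eqref{exp-app1} with $\Phi$ in $\HH_a$. As in the computation following Proposition \ref{Theorem-2.2}, a short integration by parts—using that $\eta,\xi\in H_0^1(0,L)$ so that all boundary terms vanish, that the gradient cross-terms $a\int_0^L\eta_x\bar{\varphi}_xdx$ and $\int_0^L\xi_x\bar{\psi}_xdx$ are purely imaginary, and that the skew coupling $\mp\int_0^Lc(x)\xi\bar{\eta}\,dx\pm\int_0^Lc(x)\eta\bar{\xi}\,dx$ contributes only an imaginary part—shows that
\begin{equation*}
\Re\left\langle \mathcal{A}_a\Phi,\Phi\right\rangle_{\HH_a}=-\int_{\alpha_2}^{\alpha_3-2\varepsilon}\left(\abs{\eta}^2+\abs{\xi}^2\right)dx.
\end{equation*}

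First I would take the real part of the $\HH_a$–inner product of \eqref{exp-app1} with $\Phi$. Since $\la\in\R$ and $\norm{\Phi}_{\HH_a}^2\in\R$, one has $\Re\left(i\la\norm{\Phi}_{\HH_a}^2\right)=0$, and therefore
\begin{equation*}
\int_{\alpha_2}^{\alpha_3-2\varepsilon}\left(\abs{\eta}^2+\abs{\xi}^2\right)dx=-\Re\left\langle \mathcal{A}_a\Phi,\Phi\right\rangle_{\HH_a}=\Re\left\langle\left(i\la I-\mathcal{A}_a\right)\Phi,\Phi\right\rangle_{\HH_a}=\Re\left\langle F,\Phi\right\rangle_{\HH_a}.
\end{equation*}

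Then I would estimate the right-hand side by Cauchy--Schwarz, $\abs{\left\langle F,\Phi\right\rangle_{\HH_a}}\leq\norm{F}_{\HH_a}\norm{\Phi}_{\HH_a}$. Invoking the normalization $\norm{\Phi}_{\HH_a}=1$ from \eqref{Phin} together with the convergence $\norm{F}_{\HH_a}=o(1)$ from \eqref{exp-app1}, the product is $o(1)$, so that
\begin{equation*}
\int_{\alpha_2}^{\alpha_3-2\varepsilon}\left(\abs{\eta}^2+\abs{\xi}^2\right)dx=o(1).
\end{equation*}
Because both integrands are nonnegative, each integral $\int_{\alpha_2}^{\alpha_3-2\varepsilon}\abs{\eta}^2dx$ and $\int_{\alpha_2}^{\alpha_3-2\varepsilon}\abs{\xi}^2dx$ is separately $o(1)$, which is precisely the claimed estimate.

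I expect essentially no genuine obstacle in this lemma: it is nothing more than the dissipation inequality rewritten under the contradiction hypothesis. The only point deserving a line of care is the verification that the conservative part of $\left\langle\mathcal{A}_a\Phi,\Phi\right\rangle_{\HH_a}$ is purely imaginary, so that taking the real part isolates exactly the two damping integrals over $(\alpha_2,\alpha_3-2\varepsilon)$ and introduces no stray terms.
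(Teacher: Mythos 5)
Your proposal is correct and follows exactly the paper's argument: the paper likewise takes the $\HH_a$-inner product of \eqref{exp-app1} with $\Phi$, uses dissipativity to identify $-\Re\left\langle\mathcal{A}_a\Phi,\Phi\right\rangle_{\HH_a}$ with the two damping integrals, and bounds $\Re\left\langle F,\Phi\right\rangle_{\HH_a}$ by $o(1)$ via the boundedness of $\Phi$ and $\norm{F}_{\HH_a}\to 0$. Your additional verification that the conservative part of the form is purely imaginary is a correct and worthwhile elaboration of a step the paper leaves implicit.
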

\begin{proof}
Taking the inner product of \eqref{exp-app1} with $\Phi$ in $\mathcal{H}_a$, then using the fact that $\Phi$ is uniformly bounded in $\mathcal{H}_a$, we get 
$$
\int_{\alpha_2}^{\alpha_3-2\varepsilon}\abs{\eta}^2dx+\int_{\alpha_2}^{\alpha_3-2\varepsilon}\abs{\xi}^2dx=-\Re\left<\mathcal{A}_a\Phi,\Phi\right>_{\mathcal{H}_a}=\Re\left<\left(i\la I-\mathcal{A}_a\right)\Phi,\Phi\right>=o(1).
$$
Thus, the proof of the Lemma is complete.
\end{proof}

$\newline$
%%%%%%%%%%%%%%%%%%%%%%%%%%%%%%%%%%%%%%%%%%
%%%%%%%%%%%%%%%%%%%%%%%%%%%%%%%%%%%%%%%%%%
\noindent Substituting $\eta$ and $\xi$ by $i\la \varphi-f_1$ and $i\la \psi-f_3$ respectively in \eqref{exp-app3} and \eqref{exp-app5}, we get the following system 
\begin{eqnarray}
\la^2\varphi+a\varphi_{xx}-i\la \left({\mathds{1}}_{(\alpha_{2},\alpha_{3}-2\varepsilon)}\right)(x)\varphi-i\la c(x)\psi &=&-i\la f_1+\left({\mathds{1}}_{(\alpha_{2},\alpha_{3}-2\varepsilon)}\right)(x)f_1-f_2-c(x)f_3,\label{exp-app6}\\
\la^2\psi+\psi_{xx}-i\la \left({\mathds{1}}_{(\alpha_{2},\alpha_{3}-2\varepsilon)}\right)(x)\psi+i\la c(x)\varphi &=&c(x)f_1-i\la f_3-\left({\mathds{1}}_{(\alpha_{2},\alpha_{3}-2\varepsilon)}\right)(x)f_3-f_4.\label{exp-app7}
\end{eqnarray}
%%%%%%%%%%%%%%%%%%%%%%%%%%%%%%%%%%%%%%%%%%
%%%%%%%%%%%%%%%%%%%%%%%%%%%%%%%%%%%%%%%%%%
\begin{lemma}\label{Second-Estimation-aux}
Let $0<\delta<\frac{\alpha_3-2\varepsilon-\alpha_2}{2}$. The solution $\left(\varphi,\eta,\psi,\xi\right)\in D(\mathcal{A}_a)$ of Equations \eqref{exp-app1}-\eqref{exp-app4} satisfies the following asymptotic behavior estimation
\begin{equation*}
\int_{\alpha_2+\delta}^{\alpha_3-2\varepsilon-\delta}\abs{\varphi_x}^2dx=o(1)\quad \text{and}\quad \int_{\alpha_2+\delta}^{\alpha_3-2\varepsilon-\delta}\abs{\psi_x}^2dx=o(1).
\end{equation*}
\end{lemma}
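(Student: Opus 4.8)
The plan is to localize with a smooth cutoff and run the standard multiplier argument on \eqref{exp-app6}, drawing all smallness from Lemma \ref{First-Estimation-aux}. First I would rephrase that lemma in terms of $\varphi,\psi$: since $\eta=i\la\varphi-f_1$ and $\xi=i\la\psi-f_3$ with $f_1,f_3\to0$ in $H_0^1(0,L)\hookrightarrow L^2(0,L)$, Lemma \ref{First-Estimation-aux} yields
\begin{equation*}
\int_{\alpha_2}^{\alpha_3-2\varepsilon}\abs{\la\varphi}^2dx=o(1)\quad\text{and}\quad \int_{\alpha_2}^{\alpha_3-2\varepsilon}\abs{\la\psi}^2dx=o(1),
\end{equation*}
so that $\|\varphi\|_{L^2(\alpha_2,\alpha_3-2\varepsilon)}$ and $\|\psi\|_{L^2(\alpha_2,\alpha_3-2\varepsilon)}$ are $o(\la^{-1})$. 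I would also record that $\|\varphi_x\|$ and $\|\psi_x\|$ are $O(1)$ because $\|\Phi\|_{\mathcal{H}_a}=1$.

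Next I would fix $\theta\in C_0^\infty(0,L)$ with $0\le\theta\le1$, $\theta\equiv1$ on $(\alpha_2+\delta,\alpha_3-2\varepsilon-\delta)$ and $\mathrm{supp}\,\theta\subset(\alpha_2,\alpha_3-2\varepsilon)$; note $\mathrm{supp}\,\theta'\subset(\alpha_2,\alpha_3-2\varepsilon)$ as well. Multiplying \eqref{exp-app6} by $\theta\bar\varphi$, integrating over $(0,L)$ and integrating the term $a\int\varphi_{xx}\theta\bar\varphi$ by parts (the boundary terms vanish since $\theta$ is compactly supported) gives
\begin{equation*}
a\int_0^L\theta\abs{\varphi_x}^2dx=\int_0^L\theta\abs{\la\varphi}^2dx-a\int_0^L\theta'\varphi_x\bar\varphi\,dx-i\la\int_0^L\left({\mathds{1}}_{(\alpha_2,\alpha_3-2\varepsilon)}\right)(x)\theta\abs{\varphi}^2dx-i\la\int_0^Lc(x)\psi\theta\bar\varphi\,dx-\int_0^LR_1\theta\bar\varphi\,dx,
\end{equation*}
where $R_1$ is the right-hand side of \eqref{exp-app6}. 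I would then bound each term: the first is $o(1)$ because $\mathrm{supp}\,\theta\subset(\alpha_2,\alpha_3-2\varepsilon)$; the cross term is controlled by $\|\theta'\|_\infty\|\varphi_x\|_{L^2}\|\varphi\|_{L^2(\mathrm{supp}\,\theta')}=O(1)\cdot o(\la^{-1})=o(1)$; the weighted term equals $\la\cdot o(\la^{-2})=o(\la^{-1})$; the coupling term is $\le\la\|\psi\|_{L^2(\mathrm{supp}\,\theta)}\|\varphi\|_{L^2(\mathrm{supp}\,\theta)}=\la\,o(\la^{-1})o(\la^{-1})=o(\la^{-1})$; and the source term, whose dominant piece is $-i\la f_1$, is bounded by $\la\|f_1\|_{L^2}\|\varphi\|_{L^2(\mathrm{supp}\,\theta)}=\la\,o(1)o(\la^{-1})=o(1)$, the remaining pieces $\left({\mathds{1}}\right)f_1-f_2-c\,f_3$ contributing $o(\la^{-1})$. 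Hence $a\int\theta|\varphi_x|^2=o(1)$, and since $\theta\ge0$ with $\theta\equiv1$ on $(\alpha_2+\delta,\alpha_3-2\varepsilon-\delta)$ this yields the first estimate.

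The second estimate follows verbatim by multiplying \eqref{exp-app7} by $\theta\bar\psi$ and repeating the computation, the coupling contribution $i\la\int c\,\varphi\theta\bar\psi$ being handled exactly as before. The only delicate points are that the cross term $\int\theta'\varphi_x\bar\varphi$ does not vanish pointwise and must be absorbed using the boundedness of $\varphi_x$ together with the $o(\la^{-1})$ smallness of $\varphi$ on $\mathrm{supp}\,\theta'$ — which is precisely why $\theta$ is chosen to vanish strictly inside $(\alpha_2,\alpha_3-2\varepsilon)$ — and that the bookkeeping of the $\la$-powers must be tight enough that every $i\la$-weighted term is genuinely $o(1)$ rather than merely $O(1)$.
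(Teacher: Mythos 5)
Your proposal is correct and follows essentially the same route as the paper: a cutoff $\theta$ supported in $(\alpha_2,\alpha_3-2\varepsilon)$ and equal to $1$ on $(\alpha_2+\delta,\alpha_3-2\varepsilon-\delta)$, the multipliers $\theta\bar\varphi$ and $\theta\bar\psi$ applied to \eqref{exp-app6}--\eqref{exp-app7}, and Lemma \ref{First-Estimation-aux} converted into $\|\la\varphi\|_{L^2(\alpha_2,\alpha_3-2\varepsilon)}=o(1)$, $\|\la\psi\|_{L^2(\alpha_2,\alpha_3-2\varepsilon)}=o(1)$ to absorb every remaining term. The only cosmetic difference is that the paper takes real parts so the term $-i\la\int\mathds{1}\theta|\varphi|^2dx$ drops out automatically, whereas you bound it directly; both are valid.
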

\begin{proof}
First, we define the first cut-off function $\theta$ in $C^1(0,L)$ by , defined by
\begin{equation}\label{theta}
0\leq \theta\leq 1,\quad \theta=1\ \ \text{on}\ \ (\alpha_2+\delta,\alpha_3-2\varepsilon-\delta)\quad \text{and}\quad \theta=0\ \ \text{on}\ \  (0,\alpha_2)\cup(\alpha_3-2\varepsilon,L).
\end{equation}
Multiplying Equations \eqref{exp-app6} and \eqref{exp-app7} by $\theta \bar{\varphi}$ and $\theta \bar{\psi}$ respectively, integrate over $(0,L)$ and using the fact that $\la \varphi$ and $\la\psi$ are uniformly bounded in $L^2(0,L)$ and $\|F\|\to 0$ in $\mathcal{H}_a$ and taking the real part, we get 
\begin{equation}\label{exp-app8}
\int_0^L\theta \abs{\la \varphi}^2dx-a\int_0^L\theta \abs{\varphi_x}^2dx-a\int_0^L\theta'\bar{\varphi}\varphi_xdx-\Re\left(i\la c_0\int_{\alpha_2}^{\alpha_3-2\varepsilon}\theta\psi\bar{\varphi}dx\right)=o(1)
\end{equation}
and 
\begin{equation}\label{exp-app9}
\int_0^L\theta \abs{\la \psi}^2dx-\int_0^L\theta \abs{\psi_x}^2dx-\int_0^L\theta'\bar{\psi}\psi_xdx+\Re\left(i\la c_0\int_{\alpha_2}^{\alpha_3-2\varepsilon}\theta\varphi\bar{\psi}dx\right)=o(1).
\end{equation}
%%%
\begin{comment}
It follows that, from the above equations 
\begin{equation}\label{exp-app8}
a\int_0^L\theta\abs{\varphi_x}^2dx \leq \int_{\alpha_2}^{\alpha_2+\varepsilon}\abs{\la\varphi}^2dx+ac_{\theta'}\int_0^L\abs{\varphi}\abs{\varphi_x}dx+c_0\abs{\la}\left(\int_{\alpha_2}^{\alpha_3-2\varepsilon}\abs{\psi}^2dx\right)^{\frac{1}{2}}\left(\int_{\alpha_2}^{\alpha_3-2\varepsilon}\abs{\varphi}^2dx\right)^{\frac{1}{2}}+o(1)
\end{equation}
and 
\begin{equation}\label{exp-app9}
\int_0^L\theta\abs{\psi_x}^2dx \leq \int_{\alpha_2}^{\alpha_2+\varepsilon}\abs{\la\psi}^2dx+ac_{\theta'}\int_0^L\abs{\psi}\abs{\psi_x}dx+c_0\abs{\la}\left(\int_{\alpha_2}^{\alpha_3-2\varepsilon}\abs{\psi}^2dx\right)^{\frac{1}{2}}\left(\int_{\alpha_2}^{\alpha_3-2\varepsilon}\abs{\varphi}^2dx\right)^{\frac{1}{2}}.
\end{equation}
\end{comment}
Using the fact that $\la \varphi$ and $\la \psi$ are uniformly bounded in $L^2(0,L)$, in particular in $L^2\left(\alpha_2,\alpha_3-2\varepsilon\right)$, and the definition of $\theta$, we get 
\begin{equation}\label{exp-app10}
\Re\left(i\la c_0\int_{\alpha_2}^{\alpha_3-2\varepsilon}\theta\psi\bar{\varphi}dx\right)=o(1)\quad \text{and}\quad \Re\left(i\la c_0\int_{\alpha_2}^{\alpha_3-2\varepsilon}\theta\varphi\bar{\psi}dx\right)=o(1).
\end{equation}
On the other hand, using the fact that $\la \varphi,\ \la \psi,\ \varphi_x$ and $\psi_x$ are uniformly bounded in $L^2(0,L)$, we get 
\begin{equation}\label{newexp-app10}
a\int_0^L\theta'\bar{\varphi}\varphi_xdx=o(1)\quad \text{and}\quad \int_0^L\theta'\bar{\psi}\psi_xdx=o(1).
\end{equation}
Furthermore, using Lemma \ref{First-Estimation-aux}, Equations \eqref{exp-app2}, \eqref{exp-app4} and the definition of the function $\theta$ in Equation \eqref{theta}, we get 
\begin{equation}\label{newexp-app11}
\int_0^L\theta \abs{\la \varphi}^2dx=o(1)\quad \text{and}\quad \int_0^L\theta \abs{\la \psi}^2dx=o(1).
\end{equation}
Inserting Equations \eqref{exp-app10}-\eqref{newexp-app11} in Equations \eqref{exp-app8} and \eqref{exp-app9}, we get the desired results. Thus, the proof of this Lemma is complete .
\end{proof}

$\newline$
From Lemma \ref{First-Estimation-aux} and Lemma \ref{Second-Estimation-aux}, we get $\|\Phi\|_{\mathcal{H}_a}=o(1)$ on $(\alpha_2+\delta,\alpha_3-2\varepsilon-\delta)$. In order to complete the proof, we need to show that $\|\Phi\|_{\mathcal{H}_a}$ on $(\alpha_2+\delta,\alpha_3-2\varepsilon-\delta)^c$.

\begin{lemma}\label{Third-Estimation-aux}
Let $h\in C^1(0,L)$. The solution $\left(\varphi,\eta,\psi,\xi\right)\in D(\mathcal{A}_a)$ of Equations \eqref{exp-app2}-\eqref{exp-app5} satisfies the following asymptotic behavior estimation
\begin{equation}\label{exp-app12}
\begin{split}
\int_0^Lh'\left(\abs{\eta}^2+a\abs{\varphi_x}^2+\abs{\xi}^2+\abs{\psi_x}^2\right)dx-\Re\left(\left[ah\abs{\varphi_x}^2\right]_0^L\right)-\Re\left(\left[h\abs{\psi_x}^2\right]_0^L\right)+2\Re\left(\int_0^Lc(x)h\xi\bar{\varphi}_xdx\right)\\-2\Re\left(\int_0^Lc(x)h\eta \bar{\psi}_xdx\right)
=2\int_0^L h\bar{\varphi_x}f_2 dx+2\int_0^L h\eta(\bar{f_1})_x dx+2\int_0^L h\bar{\psi_x}f_4 dx+2\int_0^L h\xi(\bar{f_3})_x dx.
\end{split}
\end{equation}
\end{lemma}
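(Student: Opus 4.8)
The plan is to prove this as a standard energy (multiplier) identity, the natural multipliers being $2h\bar\varphi_x$ for Equation \eqref{exp-app3} and $2h\bar\psi_x$ for Equation \eqref{exp-app5}. Concretely, I would multiply \eqref{exp-app3} by $2h\bar\varphi_x$ and \eqref{exp-app5} by $2h\bar\psi_x$, integrate each over $(0,L)$, take real parts, and add the two relations. Since the left-hand side of the target identity is real, taking real parts from the outset is the correct normalization, and every term will then be grouped according to whether it produces a bulk integral $\int_0^L h'(\cdots)dx$, a boundary term, a coupling term, or a source term.

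The core of the argument is the treatment of the two terms that generate the kinetic bulk integrals. For $2\Re\int_0^L i\la\eta\,h\bar\varphi_x\,dx$ I would use Equation \eqref{exp-app2} in its conjugated and differentiated form, namely $-i\la\bar\varphi_x=\bar\eta_x+(\bar f_1)_x$, to replace $i\la\bar\varphi_x$; the integrand becomes $-h\eta\bar\eta_x-h\eta(\bar f_1)_x$. Using $\Re(\eta\bar\eta_x)=\frac{1}{2}(|\eta|^2)_x$ and integrating by parts gives $\int_0^L h'|\eta|^2\,dx-[\,h|\eta|^2\,]_0^L$, where the boundary term vanishes because $\eta\in H_0^1(0,L)$; the leftover $-2\Re\int_0^L h\eta(\bar f_1)_x\,dx$ is transferred to the right-hand side. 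The same computation, now based on \eqref{exp-app4}, applies to $2\Re\int_0^L i\la\xi\,h\bar\psi_x\,dx$ and produces $\int_0^L h'|\xi|^2\,dx$ together with the source term $-2\Re\int_0^L h\xi(\bar f_3)_x\,dx$.

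For the elliptic terms I would integrate $-2a\Re\int_0^L\varphi_{xx}h\bar\varphi_x\,dx$ and $-2\Re\int_0^L\psi_{xx}h\bar\psi_x\,dx$ by parts, using $\Re(\varphi_{xx}\bar\varphi_x)=\frac{1}{2}(|\varphi_x|^2)_x$. This yields the bulk terms $a\int_0^L h'|\varphi_x|^2\,dx$ and $\int_0^L h'|\psi_x|^2\,dx$, and this time the boundary contributions $-\Re[\,ah|\varphi_x|^2\,]_0^L$ and $-\Re[\,h|\psi_x|^2\,]_0^L$ do survive, precisely because $\varphi_x$ and $\psi_x$ need not vanish at the endpoints. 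These surviving boundary terms are exactly what makes the multiplier useful downstream, as they let one cross $\partial(\alpha_2,\alpha_3-2\varepsilon)$ and extend the interior estimates of Lemmas \ref{First-Estimation-aux} and \ref{Second-Estimation-aux}. The coupling terms $2\Re\int_0^L c(x)\xi\,h\bar\varphi_x\,dx$ and $-2\Re\int_0^L c(x)\eta\,h\bar\psi_x\,dx$ carry over unchanged to the left-hand side, while $2\Re\int_0^L f_2 h\bar\varphi_x\,dx$ and $2\Re\int_0^L f_4 h\bar\psi_x\,dx$ remain as source terms on the right.

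The one point that is not purely mechanical, and that I expect to be the main obstacle, is the pair of dissipation terms $2\Re\int_0^L\mathds{1}_{(\alpha_2,\alpha_3-2\varepsilon)}\,\eta\,h\bar\varphi_x\,dx$ and $2\Re\int_0^L\mathds{1}_{(\alpha_2,\alpha_3-2\varepsilon)}\,\xi\,h\bar\psi_x\,dx$ arising from the internal damping: unlike every other contribution, these do not reduce to a perfect $x$-derivative. The observation that settles this is that the indicator restricts each integral to $(\alpha_2,\alpha_3-2\varepsilon)$, where Lemma \ref{First-Estimation-aux} gives $\int|\eta|^2=o(1)$ and $\int|\xi|^2=o(1)$; combined with the uniform bound on $\varphi_x,\psi_x$ in $L^2(0,L)$ (from $\|\Phi\|_{\mathcal{H}_a}=1$) and Cauchy--Schwarz, both terms are $o(1)$ and are therefore absorbed, leaving exactly the stated identity. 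Apart from this, the proof is a careful exercise in sign bookkeeping and in keeping straight which boundary terms vanish ($\eta,\xi$) and which persist ($\varphi_x,\psi_x$).
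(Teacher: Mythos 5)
Your proposal is correct and follows essentially the same route as the paper: multiply \eqref{exp-app3} and \eqref{exp-app5} by $2h\bar{\varphi}_x$ and $2h\bar{\psi}_x$, substitute $-i\la\bar{\varphi}_x=\bar{\eta}_x+(\bar{f_1})_x$ and $-i\la\bar{\psi}_x=\bar{\xi}_x+(\bar{f_3})_x$ from \eqref{exp-app2} and \eqref{exp-app4}, take real parts and integrate by parts, with the boundary terms in $\eta,\xi$ vanishing and the damping contributions on $(\alpha_2,\alpha_3-2\varepsilon)$ absorbed via Lemma \ref{First-Estimation-aux} and Cauchy--Schwarz. Your explicit justification of why the indicator-function terms are $o(1)$ is in fact slightly more careful than the paper's one-line appeal to Lemma \ref{First-Estimation-aux}.
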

\begin{proof}
Multiplying Equations \eqref{exp-app3} and \eqref{exp-app5} by $2h\bar{\varphi}_x$ and $2h\bar{\psi}_x$ respectively, integrate over $(0,L)$ and using the fact that $\varphi_x$, $\psi_x$ are uniformly bounded in $L^2(0,L)$ and $\|F\|_{\mathcal{H}_a}\to 0$ and Lemma \ref{First-Estimation-aux}, we get 
\begin{eqnarray}
2\int_0^Li\la h \eta \bar{\varphi}_xdx-2a\int_0^Lh\varphi_{xx}\bar{\varphi_x}dx+2\int_0^Lc(x)h\xi\bar{\varphi}_xdx&=&2\int_0^L h\bar{\varphi_x}f_2 dx\label{exp-app13}\\
2\int_0^Li\la h\xi\bar{\psi}_xdx-2\int_0^Lh\psi_{xx}\bar{\psi}_xdx-2\int_0^Lc(x)h\eta\bar{\psi}_xdx&=&2\int_0^L h\bar{\psi_x}f_4 dx.\label{exp-app14}
\end{eqnarray}
From Equations \eqref{exp-app2} and \eqref{exp-app4}, we have 
$$-i\la \bar{\varphi}_x=\bar{\eta}_x+\left(\bar{f_1}\right)_x\quad \text{and}\quad -i\la \bar{\psi}_x=\bar{\xi}_x+\left(\bar{f_3}\right)_x.$$
Inserting the above equations in Equations \eqref{exp-app13} and \eqref{exp-app14} and by taking the real part, we obtain 
\begin{eqnarray}
-\int_0^Lh\abs{\eta}_x^2dx-a\int_0^Lh\abs{\varphi_x}^2_xdx+2\Re\left(\int_0^Lc(x)h\xi\bar{\varphi}_xdx\right)&=&2\int_0^L h\bar{\varphi_x}f_2 dx+2\int_0^L h\eta(\bar{f_1})_x dx,\label{exp-app15}\\
-\int_0^Lh\abs{\xi}_x^2dx-\int_0^Lh\abs{\psi_x}_x^2dx-2\Re\left(\int_0^Lc(x)\eta h\bar{\psi}_xdx\right)&=&2\int_0^L h\bar{\psi_x}f_4 dx+2\int_0^L h\xi(\bar{f_3})_x dx.\label{exp-app16}
\end{eqnarray}
Using by parts integration in Equations \eqref{exp-app15} and \eqref{exp-app16}, we get the desired results. 
\end{proof}
%%%%%%%%%%%%%%%%%%%%%%%%%%%%
%%%%%%%%%%%%%%%%%%%%%%%%%%%%
\begin{lemma}\label{Fourth-Estimation-aux}
Let $0<\delta<\frac{\alpha_3-2\varepsilon-\alpha_2}{2}$. The solution $\left(\varphi,\eta,\psi,\xi\right)\in D(\mathcal{A}_a)$ of Equations \eqref{exp-app2}-\eqref{exp-app5} satisfies the following asymptotic behavior estimation
$$
\int_0^{\alpha_2+\delta}\left(\abs{\eta}^2+a\abs{\varphi_x}^2+\abs{\xi}^2+\abs{\psi_x}^2\right)dx=o(1).
$$
\end{lemma}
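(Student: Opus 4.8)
The plan is to feed the multiplier identity of Lemma \ref{Third-Estimation-aux} with a weight $h$ tailored to localize the energy on the left region $(0,\alpha_2+\delta)$. Concretely, I would choose $h\in C^1(0,L)$ with $h(x)=x$ on $[0,\alpha_2+\delta]$ (so $h(0)=0$ and $h'=1$ there), $h\equiv 0$ on $[\alpha_3-2\varepsilon-\delta,L]$ (so $h(L)=0$), and $h$ interpolating between these two pieces in a $C^1$ manner on $[\alpha_2+\delta,\alpha_3-2\varepsilon-\delta]$; the hypothesis $0<\delta<\frac{\alpha_3-2\varepsilon-\alpha_2}{2}$ guarantees $\alpha_2+\delta<\alpha_3-2\varepsilon-\delta$, so such an $h$ exists. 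Writing $\mathcal{E}:=\abs{\eta}^2+a\abs{\varphi_x}^2+\abs{\xi}^2+\abs{\psi_x}^2$ for the local energy density, the weighted term in \eqref{exp-app12} then splits as
\begin{equation*}
\int_0^L h'\,\mathcal{E}\,dx=\int_0^{\alpha_2+\delta}\mathcal{E}\,dx+\int_{\alpha_2+\delta}^{\alpha_3-2\varepsilon-\delta}h'\,\mathcal{E}\,dx,
\end{equation*}
and the first summand is exactly the quantity to be bounded.

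Next I would show that every other term in \eqref{exp-app12} is $o(1)$. The boundary contributions $\Re([ah\abs{\varphi_x}^2]_0^L)$ and $\Re([h\abs{\psi_x}^2]_0^L)$ vanish identically since $h(0)=h(L)=0$. The transition integral $\int_{\alpha_2+\delta}^{\alpha_3-2\varepsilon-\delta}h'\,\mathcal{E}\,dx$ is $o(1)$ because $h'$ is bounded while Lemmas \ref{First-Estimation-aux} and \ref{Second-Estimation-aux} together yield $\int_{\alpha_2+\delta}^{\alpha_3-2\varepsilon-\delta}\mathcal{E}\,dx=o(1)$ on that subinterval (note $(\alpha_2+\delta,\alpha_3-2\varepsilon-\delta)\subset(\alpha_2,\alpha_3-2\varepsilon)$, so Lemma \ref{First-Estimation-aux} applies to $\eta,\xi$ there and Lemma \ref{Second-Estimation-aux} to $\varphi_x,\psi_x$). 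Finally the source terms on the right of \eqref{exp-app12} are $o(1)$: the factors $\varphi_x,\psi_x,\eta,\xi$ are uniformly bounded in $L^2(0,L)$, whereas $f_2,f_4\to0$ in $L^2(0,L)$ and $(f_1)_x,(f_3)_x\to0$ in $L^2(0,L)$ since $F\to0$ in $\mathcal{H}_a$ forces $f_1,f_3\to0$ in $H_0^1(0,L)$.

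The one genuinely structural point is the coupling pair $2\Re\!\big(\int_0^L c(x)h\,\xi\bar\varphi_x\,dx\big)$ and $2\Re\!\big(\int_0^L c(x)h\,\eta\bar\psi_x\,dx\big)$. Here I would exploit that $c$ is supported in $(\alpha_2,\alpha_4)$ while $h$ is supported in $(0,\alpha_3-2\varepsilon-\delta)$, so the product $c\,h$ is supported in $(\alpha_2,\alpha_3-2\varepsilon)$, which is precisely where Lemma \ref{First-Estimation-aux} makes $\eta,\xi=o(1)$ in $L^2$; a Cauchy--Schwarz estimate against the uniformly $L^2$-bounded $\varphi_x,\psi_x$ then forces both coupling terms to be $o(1)$. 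Inserting all of this into \eqref{exp-app12} leaves $\int_0^{\alpha_2+\delta}\mathcal{E}\,dx=o(1)$, which is the assertion. I expect the only delicate part to be the design of $h$, since it must simultaneously be admissible for Lemma \ref{Third-Estimation-aux} (i.e.\ $C^1$), vanish at both endpoints, satisfy $h'=1$ on $(0,\alpha_2+\delta)$, and be supported inside $(0,\alpha_3-2\varepsilon-\delta)$ so that the coupling terms fall within the already-controlled window; once $h$ is fixed, all remaining steps are routine.
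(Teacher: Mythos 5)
Your proposal is correct and follows essentially the same route as the paper: the authors take $h=x\tilde\theta(x)$ with $\tilde\theta$ a $C^1$ cut-off equal to $1$ on $(0,\alpha_2+\delta)$ and $0$ on $(\alpha_3-2\varepsilon-\delta,L)$, which is exactly your weight, and they dispose of the boundary, transition, source and coupling terms in the same way (the coupling terms being $o(1)$ because $c\,h$ is supported where Lemma \ref{First-Estimation-aux} controls $\eta,\xi$). No substantive difference.
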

\begin{proof}
Define the cut-off function $\tilde{\theta}$ in $C^1([0,L])$ by 
\begin{equation}\label{thetatilde}
0\leq \tilde{\theta}\leq 1,\quad \tilde{\theta} =1\ \ \text{on}\ \ (0,\alpha_2+\delta),\quad \tilde{\theta}=0\ \ \text{on}\ \ (\alpha_3-2\varepsilon-\delta,L).
\end{equation}
Take  $h=x\tilde{\theta}(x)$ in Equation \eqref{exp-app12}, we get 
\begin{equation}\label{exp-app17}
\begin{array}{l}
\displaystyle{\int_0^Lh'\left(\abs{\eta}^2+a\abs{\varphi_x}^2+\abs{\xi}^2+\abs{\psi_x}^2\right)dx+2c_0\Re\left(\int_{\alpha_2}^{\alpha_3-2\varepsilon-\delta}x\tilde{\theta}\xi\bar{\varphi}_xdx\right)}\\
\qquad\displaystyle{-2c_0\Re\left(\int_{\alpha_2}^{\alpha_3-2\varepsilon-\delta}x\tilde{\theta}\eta \bar{\psi}_xdx\right)
=o(1).}
\end{array}
\end{equation}
Using Lemma \eqref{First-Estimation-aux} and $\varphi_x$ and $\psi_x$ are uniformly bounded in $L^2(0,L)$ and in particular in $L^2(\alpha_2,\alpha_3-2\varepsilon-\delta)$, we get 
$$
2c_0\Re\left(\int_{\alpha_2}^{\alpha_3-2\varepsilon-\delta}x\tilde{\theta}\xi\bar{\varphi}_xdx\right)=o(1)\quad \text{and}\quad 2c_0\Re\left(\int_{\alpha_2}^{\alpha_3-2\varepsilon-\delta}x\tilde{\theta}\eta \bar{\psi}_xdx\right)=o(1).
$$
Inserting the above equations in Equation \eqref{exp-app17}, and using Lemmas \eqref{First-Estimation-aux}-\eqref{Second-Estimation-aux} and  the definition the function $\tilde{\theta}$, we get the desired result.
\end{proof}

$\newline$
%%%%%%%%%%%%%%%%%%%
From the preceded results of Lemmas \ref{First-Estimation-aux},\ \ref{Second-Estimation-aux} and  \ref{Fourth-Estimation-aux} , we deduce that
$$
\|\Phi\|_{\mathcal{H}_a}=o(1)\quad \text{on}\quad (\alpha_2+\delta,\alpha_3-2\varepsilon-\delta).
$$ 
\noindent Now, our goal is to prove that $\|\Phi\|_{\mathcal{H}_a}=o(1)$ on $\left(\alpha_{3}-2\varepsilon-\delta,L\right)$. For this aims, let $g\in C^1\left([\alpha_{3}-2\varepsilon-\delta,\alpha_4]\right)$ such that 
$$
g(\alpha_4)=-g(\alpha_3-2\varepsilon-\delta)=1,\quad \displaystyle{\max_{x\in[\alpha_3-3\varepsilon,\alpha_4]}\abs{g(x)}=c_g}\quad \text{and}\quad \max_{x\in[\alpha_3-3\varepsilon,\alpha_4]}\abs{g'(x)}=c_{g'}
$$
where $c_g$ and $c_{g'}$ are strictly positive constant numbers.
\begin{rem}
It is easy to see the existence of $g(x)$. For example, we can take $g(x)=\displaystyle{\cos\left(\frac{(\alpha_4-x)\pi}{\alpha_4-\alpha_{3}+2\varepsilon+\delta}\right)}$ to  get $g(\alpha_4)=-g(\alpha_3-2\varepsilon-\delta)=1$, $g\in C^1\left([\alpha_{3}-2\varepsilon-\delta,4]\right)$, $\abs{g(x)}\leq 1$ and $\abs{g'(x)}\leq \frac{\pi}{\alpha_4-\alpha_3+2\varepsilon+\delta}$.
\end{rem}
%%%%%%%%%%%%%%%%%%%%%%%%%%%%%%%%%%%%%%
%%%%%%%%%%%%%%%%%%%%%%%%%%%%%%%%%%%%%%
\begin{lemma}\label{5-Estimation-aux}
Let $0<\delta<\frac{\alpha_3-2\varepsilon-\alpha_2}{2}$. The solution $\left(\varphi,\eta,\psi,\xi\right)\in D(\mathcal{A}_a)$ of Equations \eqref{exp-app1}-\eqref{exp-app4} satisfies the following asymptotic behavior estimation
\begin{equation*}
\abs{\eta(\alpha_4)}^2=O(1),\ \ \abs{\eta(\alpha_3-2\varepsilon-\delta)}^2=O(1),\ \ \abs{\xi(\alpha_4)}^2=O(1)\ \ \text{and}\ \ \abs{\xi(\alpha_3-2\varepsilon-\delta)}^2=O(1).
\end{equation*}
\end{lemma}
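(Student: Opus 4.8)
The plan is to run a Rellich-type (piecewise multiplier) computation on the interval $I:=(\alpha_3-2\varepsilon-\delta,\alpha_4)$, taking advantage of the normalisation $g(\alpha_4)=-g(\alpha_3-2\varepsilon-\delta)=1$ chosen for the multiplier $g$. Concretely, I would multiply the second order equation \eqref{exp-app6} by $2g\bar\varphi_x$ and the equation \eqref{exp-app7} by $2g\bar\psi_x$, integrate over $I$, take real parts, and add. On $I$ the damping indicator $\mathds{1}_{(\alpha_2,\alpha_3-2\varepsilon)}$ is bounded by $1$ and the coupling satisfies $c(x)\equiv c_0$, so the only terms generating boundary contributions are the principal ones $\la^2\varphi+a\varphi_{xx}$ and $\la^2\psi+\psi_{xx}$.

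For these principal terms one uses $2\Re(\la^2\varphi\,\bar\varphi_x)=\la^2(\abs{\varphi}^2)_x$ and $2a\Re(\varphi_{xx}\bar\varphi_x)=a(\abs{\varphi_x}^2)_x$, and similarly for $\psi$; after integration by parts they produce the boundary expression
$$
\Big[\,g\big(\la^2\abs{\varphi}^2+a\abs{\varphi_x}^2+\la^2\abs{\psi}^2+\abs{\psi_x}^2\big)\Big]_{\alpha_3-2\varepsilon-\delta}^{\alpha_4},
$$
which, by the sign choice of $g$, is exactly the sum over the two endpoints of the \emph{non-negative} quantity $\la^2\abs{\varphi}^2+a\abs{\varphi_x}^2+\la^2\abs{\psi}^2+\abs{\psi_x}^2$. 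The remaining interior integrals are $\int_I g'\big(\la^2\abs{\varphi}^2+a\abs{\varphi_x}^2+\cdots\big)\,dx$ together with the coupling and damping contributions; all of these are $O(1)$, since $\la\varphi,\ \la\psi,\ \varphi_x,\ \psi_x$ are uniformly bounded in $L^2(0,L)$ (from $\norm{\Phi}_{\mathcal{H}_a}=1$ together with $\eta=i\la\varphi-f_1$, $\xi=i\la\psi-f_3$) and $g,g'$ are bounded by $c_g,c_{g'}$.

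The delicate point, which I expect to be the main obstacle, is the forcing: the right hand sides of \eqref{exp-app6}-\eqref{exp-app7} contain $-i\la f_1$ and $-i\la f_3$, so the multiplier produces terms such as $\int_I \la f_1\, g\,\bar\varphi_x\,dx$ in which the factor $\la$ cannot simply be discarded. I would integrate these by parts to move the derivative onto $f_1,f_3,g$: the interior part becomes $\int_I\big(g'f_1+g(f_1)_x\big)(\la\bar\varphi)\,dx=o(1)$ because $\norm{f_1}_{H^1_0}=o(1)$ and $\norm{\la\varphi}=O(1)$, while the boundary part $\la\,g\,f_1\bar\varphi$ evaluated at the endpoints is of the form $o(1)\cdot\la\abs{\varphi(\cdot)}$, using that $f_1\to0$ in $H^1_0(0,L)\hookrightarrow C^0([0,L])$ forces $f_1$ and $f_3$ to vanish pointwise. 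Such a cross term is absorbed into the non-negative boundary quantities by Young's inequality, and this absorption is precisely the reason only boundedness ($O(1)$, not $o(1)$) is asserted at this stage.

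Collecting all contributions yields an inequality of the schematic form $P\le O(1)+o(1)\sqrt{P}$, where $P$ denotes the sum of the four endpoint quantities; hence $P=O(1)$, and in particular $\la^2\abs{\varphi(\alpha_4)}^2$, $\la^2\abs{\varphi(\alpha_3-2\varepsilon-\delta)}^2$, $\la^2\abs{\psi(\alpha_4)}^2$ and $\la^2\abs{\psi(\alpha_3-2\varepsilon-\delta)}^2$ are each $O(1)$. Finally, using $\eta=i\la\varphi-f_1$ pointwise and $\abs{f_1(\cdot)}\le C\norm{f_1}_{H^1_0}=o(1)$, one gets $\abs{\eta(\alpha_4)}\le \la\abs{\varphi(\alpha_4)}+\abs{f_1(\alpha_4)}=O(1)$, and the same argument with $\xi=i\la\psi-f_3$ handles the remaining three values, which completes the proof.
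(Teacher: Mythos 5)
Your proof is correct, and it rests on the same core device as the paper's: the Rellich-type multipliers $2g\bar\varphi_x$, $2g\bar\psi_x$ on $(\alpha_3-2\varepsilon-\delta,\alpha_4)$ with the normalisation $g(\alpha_4)=-g(\alpha_3-2\varepsilon-\delta)=1$, so that the flux boundary terms appear as a sum of non-negative endpoint quantities while every interior integral is $O(1)$ by the uniform bound on $\Phi$. The execution differs in a way worth recording. The paper stays with the first-order system: it multiplies $i\la\varphi_x-\eta_x=(f_1)_x$ and its analogue by $2g\bar\eta$, $2g\bar\xi$ (producing the $\abs{\eta}^2$, $\abs{\xi}^2$ endpoint terms directly), and multiplies \eqref{exp-app3}, \eqref{exp-app5} by $2g\bar\varphi_x$, $2g\bar\psi_x$; upon summing, the cross terms $\Re\bigl(2i\la\int g\varphi_x\bar\eta\,dx\bigr)$ and $\Re\bigl(2i\la\int g\eta\bar\varphi_x\,dx\bigr)$ cancel exactly (their sum is the real part of a purely imaginary quantity), and every forcing term occurs without a factor of $\la$, so no absorption is needed. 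You instead multiply the reduced second-order equations \eqref{exp-app6}--\eqref{exp-app7}, which yields $\la^2\abs{\varphi}^2+a\abs{\varphi_x}^2$ (and the $\psi$ analogue) at the endpoints but imports the $\la$-weighted forcings $-i\la f_1$, $-i\la f_3$; your integration by parts followed by Young absorption of the resulting $o(1)\sqrt{P}$ endpoint cross terms is the right fix and is sound (the pointwise bound $\abs{f_1(s)}\le C\norm{f_1}_{H_0^1(0,L)}=o(1)$ is legitimate in one dimension), and the final conversion via $\eta=i\la\varphi-f_1$, $\xi=i\la\psi-f_3$ recovers the stated estimates. The paper's bookkeeping is a little lighter because the cancellation does the work of your absorption step; on the other hand your route delivers $\abs{\la\varphi(s)}=O(1)$ and $\abs{\la\psi(s)}=O(1)$ at the two endpoints directly, which is precisely the form of the information consumed later in the proof of Lemma \ref{6-Estimation-aux}.
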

\begin{proof}
From \eqref{exp-app3} and \eqref{exp-app5}, we have 
\begin{equation}\label{exp-app18}
i\la\varphi_x-\eta_x=\left(f_1\right)_x\quad \text{and}\quad i\la\psi_x-\xi_x=\left(f_3\right)_x.
\end{equation}
Multiplying the first equation and the second equation of \eqref{exp-app18} respectively  by $2g(x)\bar{\eta}$ and $2g(x)\bar{\xi}$, integrate over $(\alpha_3-2\varepsilon-\delta,\alpha_4)$ and using the fact that $\|F\|_{\mathcal{H}_a}\to 0$ and $\eta$ and $\xi$ are uniformly bounded in $L^2(0,L)$ in particular in $L^2(\alpha_3-2\varepsilon-\delta,\alpha_4)$, we get
\begin{eqnarray}
\Re\left(2i\la\int_{\alpha_3-2\varepsilon-\delta}^{\alpha_4}g\varphi_x\bar{\eta}dx\right)-\int_{\alpha_3-2\varepsilon-\delta}^{\alpha_4}g(x)\left(\abs{\eta}^2\right)_xdx&=&o(1),\label{exp-app19}\\
\Re\left(2i\la\int_{\alpha_3-2\varepsilon-\delta}^{\alpha_4}g\psi_x\bar{\xi}dx\right)-\int_{\alpha_3-2\varepsilon-\delta}^{\alpha_4}g(x)\left(\abs{\xi}^2\right)_xdx&=&o(1).\label{exp-app20}
\end{eqnarray}
Using integration by parts in Equations \eqref{exp-app19} and \eqref{exp-app20}, we get 
\begin{eqnarray}
\int_{\alpha_3-2\varepsilon-\delta}^{\alpha_4}g'(x)\abs{\eta}^2dx+\Re\left(2i\la\int_{\alpha_3-2\varepsilon-\delta}^{\alpha_4}g\varphi_x\bar{\eta}dx\right)&=&\abs{\eta(\alpha_4)}^2+\abs{\eta(\alpha_3-2\varepsilon-\delta)}^2+o(1),\label{exp-app21}\\
\int_{\alpha_3-2\varepsilon-\delta}^{\alpha_4}g'(x)\abs{\xi}^2dx+\Re\left(2i\la\int_{\alpha_3-2\varepsilon-\delta}^{\alpha_4}g\psi_x\bar{\xi}dx\right)&=&\abs{\xi(\alpha_4)}^2+\abs{\xi(\alpha_3-2\varepsilon-\delta)}^2+o(1).\label{exp-app22}
\end{eqnarray}
Multiplying Equations \eqref{exp-app3} and \eqref{exp-app5} by $2g(x)\bar{\varphi}_x$ and $2g(x)\bar{\psi}_x$ respectively , integrate over $(\alpha_3-2\varepsilon-\delta,\alpha_4)$, using the fact $\|F\|_{\mathcal{H}_a}\to 0$, $\varphi_x$ and $\psi_x$ are uniformly bounded in $L^2(0,L)$ and Lemma \ref{First-Estimation-aux} and taking the real part, we get 
\begin{eqnarray*}
\Re\left(2i\la\int_{\alpha_3-2\varepsilon-\delta}^{\alpha_4}g(x)\eta\bar{\varphi}_xdx\right)-a\int_{\alpha_3-2\varepsilon-\delta}^{\alpha_4}g(x)\left(\abs{\varphi_x}^2\right)_xdx+2\Re\left(c_0\int_{\alpha_3-2\varepsilon-\delta}^{\alpha_4}g(x)\xi\bar{\varphi}_xdx\right)&=&o(1),\\
\Re\left(2i\la\int_{\alpha_3-2\varepsilon-\delta}^{\alpha_4}g(x)\xi\bar{\psi}_xdx\right)-\int_{\alpha_3-2\varepsilon-\delta}^{\alpha_4}g(x)\left(\abs{\psi_x}^2\right)_xdx-2\Re\left(c_0\int_{\alpha_3-2\varepsilon-\delta}^{\alpha_4}g(x)\eta\bar{\psi}_xdx\right)&=&o(1).
\end{eqnarray*}
Using integration by parts in the second terms of the above  Equations, we obtain 
\begin{equation}\label{exp-app23}
\begin{array}{l}
\displaystyle{\Re\left(2i\la\int_{\alpha_3-2\varepsilon-\delta}^{\alpha_4}g(x)\eta\bar{\varphi}_xdx\right)+a\int_{\alpha_3-2\varepsilon-\delta}^{\alpha_4}g'(x)\abs{\varphi_x}^2dx+2\Re\left(c_0\int_{\alpha_3-2\varepsilon-\delta}^{\alpha_4}g(x)\xi\bar{\varphi}_xdx\right)}\nline
\qquad \displaystyle{=a\abs{\varphi_x(\alpha_4)}^2+a\abs{\varphi_x(\alpha_3-2\varepsilon-\delta)}^2+o(1)}
\end{array}
\end{equation}
and 
\begin{equation}\label{exp-app24}
\begin{array}{l}
\displaystyle{\Re\left(2i\int_{\alpha_3-2\varepsilon-\delta}^{\alpha_4}g(x)\xi\bar{\psi}_xdx\right)+\int_{\alpha_3-2\varepsilon-\delta}^{\alpha_4}g'(x)\abs{\psi_x}^2dx-2\Re\left(c_0\int_{\alpha_3-2\varepsilon-\delta}^{\alpha_4}g(x)\eta\bar{\psi}_xdx\right)}\nline
\qquad \displaystyle{=\abs{\psi_x(\alpha_4)}^2+\abs{\psi_x(\alpha_3-2\varepsilon-\delta)}^2+o(1).}
\end{array}
\end{equation}
Adding Equations \eqref{exp-app21}-\eqref{exp-app24}, we get 
\begin{equation}\label{exp-app25}
\begin{split}
M(\alpha_4,\alpha_3-2\varepsilon-\delta)+N(\alpha_4,\alpha_3-2\varepsilon-\delta)=\int_{\alpha_3-2\varepsilon-\delta}^{\alpha_4}g'(x)\left(\abs{\eta}^2+a\abs{\varphi_x}^2+\abs{\xi}^2+\abs{\psi_x}^2\right)dx\\
+2\Re\left(c_0\int_{\alpha_3-2\varepsilon-\delta}^{\alpha_4}g(x)\xi\bar{\varphi}_xdx\right)-2\Re\left(c_0\int_{\alpha_3-2\varepsilon-\delta}^{\alpha_4}g(x)\eta\bar{\psi}_xdx\right)+o(1)
\end{split}
\end{equation}
where 
\begin{eqnarray*}
M(\alpha_4,\alpha_3-2\varepsilon-\delta)&=&\abs{\eta(\alpha_4)}^2+\abs{\eta(\alpha_3-2\varepsilon-\delta)}+a\abs{\varphi_x(\alpha_4)}^2+a\abs{\varphi_x(\alpha_3-2\varepsilon-\delta)}^2,\\
N(\alpha_4,\alpha_3-2\varepsilon-\delta)&=&\abs{\xi(\alpha_4)}^2+\abs{\xi(\alpha_3-2\varepsilon-\delta)}^2+\abs{\psi_x(\alpha_4)}^2+\abs{\psi_x(\alpha_3-2\varepsilon-\delta)}^2.
\end{eqnarray*}
From Equation \eqref{exp-app25}, we get 
\begin{equation*}
\begin{split}
M(\alpha_4,\alpha_3-2\varepsilon-\delta)+N(\alpha_4,\alpha_3-2\varepsilon-\delta)\leq c_{g'}\int_{\alpha_3-2\varepsilon-\delta}^{\alpha_4}\left(\abs{\eta}^2+a\abs{\varphi_x}^2+\abs{\xi}^2+\abs{\psi_x}^2\right)dx\\
+c_0c_g\|\xi\|_{L^2(0,L)}\|\varphi_x\|_{L^2(0,L)}+c_0c_g\|\eta\|_{L^2(0,L)}\|\psi_x\|_{L^2(0,L)}+o(1).
\end{split}
\end{equation*}
Using the fact that $\|\Phi\|$ is uniformly bounded in $\mathcal{H}_a$, we obtain the desired result. The proof of this Lemma has been completed.
\end{proof}
%%%%%%%%%%%%%%%%%%%%%%%%%%%%
%%%%%%%%%%%%%%%%%%%%%%%%%%%%
\begin{lemma}\label{6-Estimation-aux}
Let $0<\delta<\frac{\alpha_3-2\varepsilon-\alpha_2}{2}$. The solution $\left(\varphi,\eta,\psi,\xi\right)\in D(\mathcal{A}_a)$ of Equations \eqref{exp-app2}-\eqref{exp-app5} satisfies the following asymptotic behavior estimation
$$
\int_{\alpha_3-2\varepsilon-\delta}^L\left(\abs{\eta}^2+a\abs{\varphi_x}^2+\abs{\xi}^2+\abs{\psi_x}^2\right)dx=o(1).
$$
\end{lemma}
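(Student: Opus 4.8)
The plan is to finish the contradiction argument by showing that the energy carried on the remaining subinterval $(\alpha_3-2\varepsilon-\delta,L)$ is negligible; together with Lemmas \ref{First-Estimation-aux}, \ref{Second-Estimation-aux} and \ref{Fourth-Estimation-aux} this forces $\|\Phi\|_{\mathcal{H}_a}=o(1)$, contradicting \eqref{Phin}. The main tool is the Rellich-type multiplier identity of Lemma \ref{Third-Estimation-aux}, applied with a weight adapted to the right part of the interval. First I would fix, for each $n$, a base point $b\in(\alpha_2+\delta,\alpha_3-2\varepsilon-\delta)$ at which the pointwise density $|\eta(b)|^2+a|\varphi_x(b)|^2+|\xi(b)|^2+|\psi_x(b)|^2$ is $o(1)$: such a point exists by the mean value theorem for integrals, since this density is continuous (because $\varphi_x,\psi_x,\eta,\xi\in H^1(0,L)\hookrightarrow C^0([0,L])$) and its integral over the fixed interval $(\alpha_2+\delta,\alpha_3-2\varepsilon-\delta)$ is $o(1)$ by Lemmas \ref{First-Estimation-aux}--\ref{Second-Estimation-aux}.

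Next I would run the multiplier identity of Lemma \ref{Third-Estimation-aux} on $(b,L)$ with the weight $h(x)=x-L$, so that $h'\equiv 1$ and $h(L)=0$. The condition $h(L)=0$ is decisive: combined with the Dirichlet conditions $\varphi(L)=\psi(L)=0$, which give $\eta(L)=\xi(L)=0$ through \eqref{exp-app2} and \eqref{exp-app4}, it annihilates every boundary contribution at $x=L$, including the a priori uncontrolled strain terms $a|\varphi_x(L)|^2$ and $|\psi_x(L)|^2$. At the left endpoint $x=b$ the boundary contribution is a fixed multiple of the energy density at $b$, hence $o(1)$ by the choice of $b$. The identity then reduces to
\begin{equation*}
\int_b^L\left(|\eta|^2+a|\varphi_x|^2+|\xi|^2+|\psi_x|^2\right)dx=-2\Re\int_b^L c(x)h\,\xi\bar\varphi_x\,dx+2\Re\int_b^L c(x)h\,\eta\bar\psi_x\,dx+o(1),
\end{equation*}
and, since the left-hand side dominates $\int_{\alpha_3-2\varepsilon-\delta}^L(\cdots)dx\geq 0$, it only remains to prove that the two coupling integrals are $o(1)$.

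Because $c$ is supported in $(\alpha_2,\alpha_4)$, these coupling integrals reduce to integrals over $(b,\alpha_4)$. On the damped portion $(b,\alpha_3-2\varepsilon)$ the velocities $\eta,\xi$ are $o(1)$ in $L^2$ by Lemma \ref{First-Estimation-aux}, while $\varphi_x,\psi_x$ stay bounded, so that part is $o(1)$. The genuine difficulty --- and what I expect to be the \textbf{main obstacle} --- is the coupled but \emph{undamped} window $(\alpha_3-2\varepsilon,\alpha_4)$, where all four quantities are merely bounded: a crude Cauchy--Schwarz bound yields only an $O(1)$ contribution, and it cannot be absorbed into the left-hand side since $c_0$ is a fixed (possibly large) constant and rescaling $h$ leaves the ratio unchanged. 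The boundary bounds of Lemma \ref{5-Estimation-aux} guarantee that the relevant traces at $\alpha_4$ and $\alpha_3-2\varepsilon-\delta$ are $O(1)$, which is exactly what keeps this term from blowing up, but not enough by itself to make it vanish.

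To overcome this I would first establish the auxiliary velocity estimate $\int_{\alpha_3-2\varepsilon}^{\alpha_4}\left(|\eta|^2+|\xi|^2\right)dx=o(1)$, propagating into the undamped window the smallness available at the right edge of the damping region. The idea is to exploit the skew-symmetric placement of the coupling (the pairing of $+c\,\xi$ in \eqref{exp-app6} against $-c\,\eta$ in \eqref{exp-app7}): testing \eqref{exp-app6} with $\zeta\bar\psi$ and \eqref{exp-app7} with $\zeta\bar\varphi$ for a cut-off $\zeta$ supported in $(\alpha_3-3\varepsilon,\alpha_4)$ and subtracting the conjugated identities, the $\lambda^2$-terms cancel, and using $\|\varphi\|,\|\psi\|=O(\lambda^{-1})$ (from \eqref{exp-app2},\eqref{exp-app4} and the boundedness of $\eta,\xi$) one controls the surviving off-diagonal correlations; combining this with a localized $\zeta|\lambda\varphi|^2$, $\zeta|\lambda\psi|^2$ estimate in the spirit of Lemma \ref{Second-Estimation-aux} should deliver the desired velocity bound. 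Once $\int_{\alpha_3-2\varepsilon}^{\alpha_4}(|\eta|^2+|\xi|^2)=o(1)$ is available, the coupling integrals over $(\alpha_3-2\varepsilon,\alpha_4)$ are $o(1)$, the displayed identity closes, and the stated estimate follows. The delicate point throughout is that the coupling is of order $\lambda$ and therefore cannot be treated as a lower-order perturbation; only its skew-symmetry (the same feature that makes the energy identity dissipative) and the $O(\lambda^{-1})$ smallness of $\varphi,\psi$ make the propagation through the undamped window possible.
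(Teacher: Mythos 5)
Your setup --- the weight $h(x)=x-L$ on the right-hand piece, with $h(L)=0$ killing the uncontrolled strain traces at $x=L$, and the left endpoint handled by a mean-value choice of $b$ (the paper uses a cut-off $\hat\theta$ and $h=(x-L)\hat\theta$ instead, which is equivalent) --- matches the paper's, and you have correctly isolated the crux: the coupling terms $2\Re\int c(x)h\,\xi\bar\varphi_x\,dx-2\Re\int c(x)h\,\eta\bar\psi_x\,dx$ over the undamped window $(\alpha_3-2\varepsilon,\alpha_4)$, where Cauchy--Schwarz only gives $O(1)$. But your proposed resolution of that crux is a genuine gap. The auxiliary estimate $\int_{\alpha_3-2\varepsilon}^{\alpha_4}\left(\abs{\eta}^2+\abs{\xi}^2\right)dx=o(1)$ is essentially as strong as the portion of the lemma you are trying to prove, and the cross-testing you sketch does not produce it: testing \eqref{exp-app6} by $\zeta\bar\psi$ and \eqref{exp-app7} by $\zeta\bar\varphi$ and subtracting conjugates does cancel the $\la^2$-terms, but the second-order terms leave behind $(a-1)\int\zeta\varphi_x\bar\psi_x\,dx$ (nonzero for $a\neq 1$, and in any case carrying no velocity information), the diagonal coupling terms $i\la\int c\,\zeta\left(\abs{\varphi}^2-\abs{\psi}^2\right)dx$ are already $O(\la^{-1})$ and hence vacuous, and there is no dissipative term on $(\alpha_3-2\varepsilon,\alpha_4)$ to anchor a Lemma~\ref{Second-Estimation-aux}-type localized estimate there. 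The sentence ``should deliver the desired velocity bound'' is exactly where the proof is missing.

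The paper closes this step without any auxiliary velocity estimate, by exploiting the skew-symmetry \emph{inside the coupling integrals themselves}. Substituting $\xi=i\la\psi-f_3$ and $\eta=i\la\varphi-f_1$ (from \eqref{exp-app2} and \eqref{exp-app4}) turns the two coupling terms into $2c_0\Re\int i\la h\,\psi\bar\varphi_x\,dx-2c_0\Re\int i\la h\,\varphi\bar\psi_x\,dx+o(1)$; one integration by parts in the first integral produces a bulk term $-\Re\int i\la h\left(\psi_x\bar\varphi+\varphi\bar\psi_x\right)dx$, which vanishes identically because $\psi_x\bar\varphi+\varphi\bar\psi_x=2\Re(\psi_x\bar\varphi)$ is real, a term $-\Re\int i\la h'\psi\bar\varphi\,dx=O\left(\la\|\psi\|\|\varphi\|\right)=O(\la^{-1})$, and the boundary traces $2c_0\Re\left(\left[i\la(x-L)\psi\bar\varphi\right]_{\alpha_3-2\varepsilon-\delta}^{\alpha_4}\right)$. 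These traces are $o(1)$ precisely because of Lemma~\ref{5-Estimation-aux}: the bounds $\abs{\eta(s)},\abs{\xi(s)}=O(1)$ at $s\in\{\alpha_3-2\varepsilon-\delta,\alpha_4\}$ give $\abs{\la\varphi(s)},\abs{\la\psi(s)}=O(1)$, hence $\la\psi(s)\bar\varphi(s)=O(\la^{-1})$. This substitution-plus-integration-by-parts cancellation is the missing idea in your proposal; without it (or an equivalent substitute) your argument does not close.
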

\begin{proof}
Define the cut-off function $\hat{\theta}$ in $C^1\left([0,L]\right)$ by 
\begin{equation}\label{thetahat}
0\leq \hat{\theta}\leq 1,\quad \hat{\theta}=1\ \ \text{on}\ \ (\alpha_3-2\varepsilon-\delta,L),\quad \text{and}\quad \hat{\theta}=0\ \ \text{on}\ \ (0,\alpha_2+\delta).
\end{equation}
Take $h=(x-L)\hat{\theta}$ in Equation \eqref{exp-app12}, using Lemmas \eqref{First-Estimation-aux}-\eqref{Second-Estimation-aux} and the definition of the function $\hat{\theta}$,  we get 
\begin{equation}\label{exp-app26}
\begin{array}{l}
\displaystyle{\int_{\alpha_3-2\varepsilon-\delta}^L\left(\abs{\eta}^2+a\abs{\varphi_x}^2+\abs{\xi}^2+\abs{\psi_x}^2\right)dx+2c_0\Re\left(\int_{\alpha_3-2\varepsilon-\delta}^{\alpha_4}(x-L)\hat{\theta}\xi\bar{\varphi}_xdx\right)}\nline
\qquad\displaystyle{-2c_0\Re\left(\int_{\alpha_3-2\varepsilon-\delta}^{\alpha_4}(x-L)\hat{\theta}\eta \bar{\psi}_xdx\right)
=o(1).}
\end{array}
\end{equation}
Using the fact that $\xi=i\la\psi-f_3$ and $\eta=i\la\varphi-f_1$ in the second and third  term of Equation \eqref{exp-app26} and that $\varphi_x$, $\psi_x$ are uniformly bounded in $L^2(0,L)$ and the fact that $\|F\|_{\mathcal{H}_a}\to 0$, we get 
\begin{equation*}
\begin{array}{l}
\displaystyle{2c_0\Re\left(\int_{\alpha_3-2\varepsilon-\delta}^{\alpha_4}(x-L)\hat{\theta}\xi\bar{\varphi}_xdx\right) -2c_0\Re\left(\int_{\alpha_3-2\varepsilon-\delta}^{\alpha_4}(x-L)\hat{\theta}\eta \bar{\psi}_xdx\right)=2c_0\Re\left(\int_{\alpha_3-2\varepsilon-\delta}^{\alpha_4}i\la (x-L)\hat{\theta}\psi\bar{\varphi}_xdx\right)}\nline
\qquad \displaystyle{-2c_0\Re\left(\int_{\alpha_3-2\varepsilon-\delta}^{\alpha_4}i\la (x-L)\hat{\theta}\varphi\bar{\psi}_xdx\right)+o(1).}
\end{array}
\end{equation*}
Using integration  by parts in the first term of the right hand side of the above equation and the fact that $\la \varphi$ and $\la \psi$ are uniformly bounded in $L^2(\Omega)$,  we obtain 
\begin{equation}\label{exp-app27}
\begin{array}{l}
\displaystyle{2c_0\Re\left(\int_{\alpha_3-2\varepsilon-\delta}^{\alpha_4}(x-L)\hat{\theta}\xi\bar{\varphi}_xdx\right) -2c_0\Re\left(\int_{\alpha_3-2\varepsilon-\delta}^{\alpha_4}(x-L)\hat{\theta}\eta \bar{\psi}_xdx\right)=}\nline
\qquad\displaystyle{2c_0\Re\left(\left[i\la (x-L)\psi\bar{\varphi}\right]_{\alpha_3-2\varepsilon-\delta}^{\alpha_4}\right)+o(1).}
\end{array}
\end{equation}
Inserting Equation \eqref{exp-app27} in Equation \eqref{exp-app26}, we obtain 
\begin{equation}\label{exp-app28}
\int_{\alpha_3-2\varepsilon-\delta}^L\left(\abs{\eta}^2+a\abs{\varphi_x}^2+\abs{\xi}^2+\abs{\psi_x}^2\right)dx=A(\alpha_4)+B(\alpha_3-2\varepsilon-\delta)+o(1),
\end{equation}
where 
\begin{eqnarray*}
A(\alpha_4)&=&2c_0\Re\left(i\la(L-\alpha_4)\psi(\alpha_4)\bar{\varphi}(\alpha_4)\right),\\
B(\alpha_3-2\varepsilon-\delta)&=&2c_0\Re\left(i\la(\alpha_3-2\varepsilon-\delta-L)\psi(\alpha_3-2\varepsilon-\delta)\bar{\varphi}(\alpha_3-2\varepsilon-\delta)\right).
\end{eqnarray*}
On the other hand, from Equations \eqref{exp-app2} and \eqref{exp-app4}, we have 
\begin{equation}\label{exp-app29}
\abs{\la \varphi(s)}\leq \abs{\eta(s)}+\abs{f_1(s)}\quad \text{and}\quad \abs{\la \psi(s)}\leq \abs{\xi(s)}+\abs{f_3(s)}\ \ \text{for}\ \ s\in \left\{\alpha_3-2\varepsilon-\delta,\alpha_4\right\}.
\end{equation}
Using the fact that $\displaystyle{\abs{ f_{1}(s)}\leq s\int_{0}^{s}\abs{(f_{1})_{x}}^2dx\leq sa^{-1}\|F\|_{\HH_{a}}^{2}}$ and $\displaystyle{\abs{ f_{3}(s)}\leq s\int_{0}^{s}\abs{(f_{3})_{x}}^2dx\leq s\|F\|_{\HH_{a}}^{2}}$ for all $s\in \left\{\alpha_3-2\varepsilon-\delta,\alpha_4\right\}$, and using Lemma \ref{5-Estimation-aux} in Equation \eqref{exp-app29}, we obtain 
\begin{equation*}
\abs{\lambda\varphi(s)}=O(1)\quad \text{and}\quad \abs{\lambda \psi(s)}=O(1),\quad \text{for}\quad  s\in \left\{\alpha_3-2\varepsilon-\delta,\alpha_4\right\}.
\end{equation*}
Its follow that
\begin{equation}
A(\alpha_4)+B(\alpha_3-2\varepsilon-\delta)=o(1).
\end{equation}
Using Equation \eqref{exp-app29} in Equation \eqref{exp-app28}, we obtain 
$$
\int_{\alpha_3-2\varepsilon-\delta}^L\left(\abs{\eta}^2+a\abs{\varphi_x}^2+\abs{\xi}^2+\abs{\psi_x}^2\right)dx=o(1).
$$
Thus, the proof has been completed. 
\end{proof}

%%%%%%%%%%%%%%%%%%%%%%%%%%%%
%%%%%%%%%%%%%%%%%%%%%%%%%%%%
\noindent \textbf{Proof of Theorem \ref{auxiliary-problem-Exp}} Using Lemmas \ref{First-Estimation-aux}, \ref{Second-Estimation-aux}, \ref{Fourth-Estimation-aux} and  \ref{6-Estimation-aux}, we get $\|\Phi\|_{\mathcal{H}_a}=o(1)$ on $[0,L]$, which contradicts Equation \eqref{Phin}. Therefore, \eqref{H4} holds, by  Huang \cite{Huang01} and Pruss \cite{pruss01} we deduce the exponential stability of the auxiliary problem \eqref{aux-prb}. 
\subsection{Definitions and Theorems}
We introduce here the notions of stability that we encounter in this work.

%\noindent %%%%%%%%%%%%%%%%%%%%%%%%%%%%
%%%%%%%%%%%%%%%%%%%%%%%%%%%%
%For this aim, we will use the following result (see Corollary 3.7.18 page 157 in \cite{WArendt}).

\begin{definition}\label{Defsta}
{Assume that $A$ is the generator of a C$_0$-semigroup of contractions $\left(e^{tA}\right)_{t\geq0}$  on a Hilbert space  $\mathcal{H}$. The  $C_0$-semigroup $\left(e^{tA}\right)_{t\geq0}$  is said to be
 %%%%%%%%%%%%%%%%
\begin{enumerate}
\item[1.]  strongly stable if 
$$\lim_{t\to +\infty} \|e^{tA}x_0\|_{H}=0, \quad\forall \ x_0\in H;$$
\item[2.]  exponentially (or uniformly) stable if there exist two positive constants $M$ and $\epsilon$ such that
\begin{equation*}
\|e^{tA}x_0\|_{H} \leq Me^{-\epsilon t}\|x_0\|_{H}, \quad
\forall\  t>0,  \ \forall \ x_0\in {H};
\end{equation*}
\item[3.] polynomially stable if there exists two positive constants $C$ and $\alpha$ such that
\begin{equation*}
 \|e^{tA}x_0\|_{H}\leq C t^{-\alpha}\|x_0\|_{H},  \quad\forall\ 
t>0,  \ \forall \ x_0\in D\left(\mathcal{A}\right).
\end{equation*}
In that case, one says that the semigroup $\left(e^{tA}\right)_{t\geq 0}$ decays  at a rate $t^{-\alpha}$.
\noindent The  $C_0$-semigroup $\left(e^{tA}\right)_{t\geq0}$  is said to be  polynomially stable with optimal decay rate $t^{-\alpha}$ (with $\alpha>0$) if it is polynomially stable with decay rate $t^{-\alpha}$ and, for any $\varepsilon>0$ small enough, the semigroup $\left(e^{tA}\right)_{t\geq0}$  does  not decay at a rate $t^{-(\alpha-\varepsilon)}$.
\end{enumerate}}
%\xqed{$\square$}
\end{definition}
\noindent To show the strong stability of a $C_0-$semigroup of contraction  $(e^{tA})_{t\geq 0}$ we rely on the following result due to Arendt-Batty   \cite{Arendt01}.

\begin{theoreme}\label{arendtbatty}
%\rm{{$\left(\textbf{Arendt and Batty in }\text{\cite{Arendt01}}\right)$}
Assume that $A$ is the generator of a C$_0-$semigroup of contractions $\left(e^{tA}\right)_{t\geq0}$  on a Hilbert space $\mathcal{H}$. If
 %%%%%%%%%%%%%%%%
 \begin{enumerate}
 \item[1.]  $A$ has no pure imaginary eigenvalues,
  \item[2.]  $\sigma\left(A\right)\cap i\mathbb{R}$ is countable,
 \end{enumerate}
where $\sigma\left(A\right)$ denotes the spectrum of $A$, then the $C_0-$semigroup $\left(e^{tA}\right)_{t\geq0}$  is strongly stable.%\xqed{$\square$}
\end{theoreme}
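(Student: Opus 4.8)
\noindent The plan is to deduce the Arendt--Batty criterion from a Tauberian theorem applied to the orbits of the semigroup. Fix $x_0\in\HH$ and set $u(t)=e^{tA}x_0$. Since $\left(e^{tA}\right)_{t\ge0}$ is a contraction semigroup, $u$ is bounded ($\|u(t)\|_{\HH}\le\|x_0\|_{\HH}$) and uniformly continuous on $\R^+$ (because $\|u(t+h)-u(t)\|_{\HH}\le\|e^{hA}x_0-x_0\|_{\HH}\to0$ uniformly in $t$). Moreover, for $\Re\lambda>0$ its Laplace transform is exactly the resolvent, $\hat u(\lambda)=\int_0^{\infty}e^{-\lambda t}u(t)\,dt=R(\lambda,A)x_0$. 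Thus the target $\lim_{t\to\infty}\|e^{tA}x_0\|_{\HH}=0$ is reduced to an asymptotic statement about a bounded, uniformly continuous function whose Laplace transform is holomorphic on the open right half-plane, and it suffices to establish this for every $x_0$.

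First I would translate the two hypotheses into the language of $\hat u$. The set $E:=\sigma(A)\cap i\R$ is countable, hence a \emph{closed}, nowhere dense subset of $i\R$; on $i\R\setminus E$ the resolvent $R(\cdot,A)$ extends holomorphically, so $\hat u$ continues analytically across every point of $i\R\setminus E$. The absence of purely imaginary eigenvalues enters through an Abelian limit: for each $\beta\in\R$, the operator $A-i\beta$ generates the bounded semigroup $e^{-i\beta t}e^{tA}$, and in the reflexive space $\HH$ the mean ergodic theorem gives $\lim_{\alpha\downarrow0}\alpha\,R(\alpha+i\beta,A)x_0=P_\beta x_0$, where $P_\beta$ is the projection onto $\ker(i\beta I-A)$. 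By Lemma \ref{ker}-type injectivity this kernel is trivial, so $\lim_{\alpha\downarrow0}\alpha\,\hat u(\alpha+i\beta)=0$. This is precisely the ``no singular mass'' condition the Tauberian input requires at the exceptional points.

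The engine is then the Ingham--Karamata/Arendt--Batty Tauberian theorem: if $u$ is bounded and uniformly continuous, $\hat u$ extends holomorphically through each point of $i\R\setminus E$ with $E\subset i\R$ closed and countable, and $\lim_{\alpha\downarrow0}\alpha\,\hat u(\alpha+i\beta)=0$ whenever $i\beta\in E$, then $u(t)\to0$. To prove this I would argue by transfinite induction on the Cantor--Bendixson derivatives of the closed countable set $E$. When $E=\varnothing$, holomorphic extension across all of $i\R$ forces $u(t)\to0$, which is Ingham's classical theorem. At a successor stage one strips off the isolated points of the current set; at each such isolated singularity $i\beta_0$ the vanishing Abelian limit shows, via a local Tauberian lemma, that the obstruction is removable, so the inductive hypothesis applies to the strictly smaller derived set. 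Since a closed countable set has empty perfect kernel, its Cantor--Bendixson rank is a countable ordinal and the process terminates at $\varnothing$.

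The main obstacle is exactly this Tauberian core. Off the exceptional set one has no quantitative resolvent bound, so the decay cannot simply be read off from an inverse-Laplace contour integral; furthermore the points of $E$ may accumulate, which is why the argument must be organised around the topological (Cantor--Bendixson) structure of $E$ and why the eigenvalue-free hypothesis is indispensable for neutralising each isolated singularity. Once the Tauberian theorem is secured, applying it to $u(t)=e^{tA}x_0$ for every $x_0\in\HH$ yields $\lim_{t\to\infty}\|e^{tA}x_0\|_{\HH}=0$, i.e.\ strong stability; note that this method delivers strong, but in general neither uniform nor polynomial, decay, consistent with the mere countability of the boundary spectrum.
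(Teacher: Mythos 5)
The paper does not actually prove this statement: Theorem \ref{arendtbatty} is quoted as a known result of Arendt and Batty \cite{Arendt01} and is used as a black box, so there is no internal proof to compare yours against. Your sketch is a faithful outline of the original Arendt--Batty argument: reduction to a Tauberian theorem for the bounded, uniformly continuous orbit $u(t)=e^{tA}x_0$ whose Laplace transform on the right half-plane is $R(\lambda,A)x_0$; holomorphic continuation of $\hat u$ across $i\R\setminus E$, where $E=\sigma(A)\cap i\R$ is closed and countable; the Abelian mean ergodic theorem together with the absence of imaginary eigenvalues to force $\alpha\,\hat u(\alpha+i\beta)\to 0$ at the points of $E$ (here reflexivity of the Hilbert space gives the ergodic decomposition, and since the range of $P_\beta$ is $\ker(i\beta I-A)=\{0\}$ the projection vanishes, so hypothesis 1 is exactly what is needed); and transfinite induction on the Cantor--Bendixson derivatives of $E$, with Ingham's theorem as the base case $E=\varnothing$. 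The caveat is that essentially all of the analytic difficulty is concentrated in the ``local Tauberian lemma'' that removes an isolated boundary singularity with vanishing Abel limit, and in the limit-ordinal step of the induction; you correctly identify these as the main obstacles but do not carry them out, so as written this is an accurate roadmap to the published proof rather than a self-contained one. Given that the theorem is standard and merely cited in the paper, that level of detail is the appropriate one.
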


\noindent  Concerning the characterization of exponential stability of a $C_0-$semigroup of contraction  $(e^{tA})_{t\geq 0}$ we rely on the following result due to Huang \cite{Huang01} and Pr$^¨u$ss \cite{pruss01}. 
\begin{theoreme}\label{hp}
Let $A:\ D(A)\subset H\rightarrow H$ generate a $C_0-$semigroup of contractions $\left(e^{tA}\right)_{t\geq 0}$ on $H$. Assume that $i\la \in \rho(A)$, $\forall \la \in \R$. Then, the $C_0-$semigroup $\left(e^{tA}\right)_{t\geq 0}$ is exponentially stable if and only if 
$$
\varlimsup_{\la\in\R,\ \abs{\la}\to +\infty}\|(i\la I-A)^{-1}\|_{\mathcal{L}(H)}<+\infty.
$$
\end{theoreme}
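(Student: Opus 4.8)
This is the classical Gearhart--Greiner--Pr\"uss--Huang characterization of exponential stability on a Hilbert space, and the plan is to treat the two implications separately, with the bulk of the work in the direction \emph{bounded resolvent on $i\R$ $\Rightarrow$ exponential decay}, which is genuinely a Hilbert-space phenomenon and which I would extract from Plancherel's theorem. As a preliminary normalization I would note that, since $i\lambda\in\rho(A)$ for every $\lambda$ and $\lambda\mapsto(i\lambda I-A)^{-1}$ is analytic, hence bounded on compact subsets of $i\R$, the stated hypothesis $\varlimsup_{|\lambda|\to\infty}\|(i\lambda I-A)^{-1}\|<\infty$ is equivalent to $M:=\sup_{\lambda\in\R}\|(i\lambda I-A)^{-1}\|_{\mathcal{L}(H)}<\infty$, which is the form I would use. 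The necessity direction is routine: if $\|e^{tA}\|\le M_0e^{-\omega t}$ with $\omega>0$, then for $\Re(s)>-\omega$ the Laplace representation $(sI-A)^{-1}x=\int_0^\infty e^{-st}e^{tA}x\,dt$ converges and yields $\|(sI-A)^{-1}\|\le M_0/(\omega+\Re(s))$; specializing to $s=i\lambda$ gives $M\le M_0/\omega<\infty$.

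For sufficiency, assume $i\R\subset\rho(A)$ and $M<\infty$. The first step is to propagate the bound off the axis. Because $(e^{tA})_{t\ge0}$ is a contraction semigroup, the open right half-plane lies in $\rho(A)$ with $\|(sI-A)^{-1}\|\le 1/\Re(s)$; and writing $sI-A=(i\mu I-A)\bigl[I+(s-i\mu)(i\mu I-A)^{-1}\bigr]$ with $\mu=\Im(s)$, a Neumann-series expansion shows that the resolvent extends analytically and boundedly to the strip $\{|\Re(s)|\le 1/(2M)\}$ with norm at most $2M$. Combining the two regimes gives $\tilde M:=\sup_{\Re(s)\ge0}\|(sI-A)^{-1}\|<\infty$.

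The second step is a Plancherel argument. For $x\in H$ and $\epsilon>0$, I would apply the vector-valued Plancherel theorem to the $L^2(\R;H)$ function equal to $e^{-\epsilon t}e^{tA}x$ for $t\ge0$ and to $0$ for $t<0$, whose Fourier transform is exactly $\lambda\mapsto((\epsilon+i\lambda)I-A)^{-1}x$, to obtain
\[
\int_0^\infty e^{-2\epsilon t}\|e^{tA}x\|^2\,dt=\frac{1}{2\pi}\int_{-\infty}^\infty\bigl\|((\epsilon+i\lambda)I-A)^{-1}x\bigr\|^2\,d\lambda.
\]
For $x\in D(A)$ the identity $(sI-A)^{-1}x=s^{-1}x+s^{-1}(sI-A)^{-1}Ax$ forces the decay $\|((\epsilon+i\lambda)I-A)^{-1}x\|=O(|\lambda|^{-1})$, which is square-integrable at infinity, while the strip bound $\tilde M$ controls small $|\lambda|$; together these bound the right-hand side by $C\|x\|_{D(A)}^2$ uniformly in $\epsilon$, so letting $\epsilon\downarrow0$ yields $\int_0^\infty\|e^{tA}x\|^2\,dt\le C\|x\|_{D(A)}^2$ for all $x\in D(A)$.

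The final step converts this square-integrability into uniform exponential decay. Running the same computation for the adjoint contraction semigroup $(e^{tA^*})_{t\ge0}$ --- whose resolvent satisfies $(i\lambda I-A^*)^{-1}=\bigl[(-i\lambda I-A)^{-1}\bigr]^{*}$ and is therefore equally bounded --- gives the companion estimate; one then upgrades $L^2$-integrability from $D(A)$ to all of $H$ and invokes the Datko--Pazy criterion, that a $C_0$-semigroup with $\int_0^\infty\|e^{tA}x\|^2\,dt<\infty$ for every $x$ is exponentially stable. I expect the main obstacle to be precisely this sufficiency direction, and within it the uniform-in-$\epsilon$ control of $\int_{-\infty}^\infty\|((\epsilon+i\lambda)I-A)^{-1}x\|^2\,d\lambda$: the bare bound $\tilde M\|x\|$ is not integrable in $\lambda$, so genuine decay in $\lambda$ (hence the regularity input $x\in D(A)$) is indispensable, and the passage back to a clean estimate on all of $H$ is exactly the step that relies on Hilbert-space geometry and that fails on general Banach spaces.
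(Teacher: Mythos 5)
The paper does not prove Theorem \ref{hp} at all: it is quoted in the Appendix as the classical Huang--Pr\"uss characterization, with references to \cite{Huang01} and \cite{pruss01}. So your attempt can only be judged on its own merits, and while the overall strategy (Laplace transform for necessity; strip extension of the resolvent bound, Plancherel, and Datko's theorem for sufficiency) is the right one, there is a genuine gap at exactly the step you flag as the crux. Your Plancherel computation, carried out with the decay $\|((\epsilon+i\lambda)I-A)^{-1}x\|=O(|\lambda|^{-1})$ valid for $x\in D(A)$, yields only
$$\int_0^\infty\|e^{tA}x\|^2\,dt\leq C\|x\|_{D(A)}^2,\qquad x\in D(A),$$
with the \emph{graph norm} on the right, and you never supply a mechanism for replacing $\|x\|_{D(A)}$ by $\|x\|$. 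This is not a removable technicality: the graph-norm estimate is strictly weaker than exponential stability. Rewriting it as $\int_0^\infty\|(\mu I-A)^{-1}e^{tA}y\|^2dt\leq C\|y\|^2$ and pairing it with the analogous adjoint estimate via the identity $t\langle e^{tA}x,z\rangle=\int_0^t\langle e^{sA}x,e^{(t-s)A^*}z\rangle ds$ only produces $\|(\mu I-A)^{-2}e^{tA}\|\leq C/t$, i.e.\ polynomial decay of smoothed orbits; and a diagonal semigroup on $\ell^2$ with eigenvalues $-1/n+in$ satisfies the graph-norm $L^2$ estimate while failing to be exponentially stable. So the "upgrade to all of $H$" cannot be done from that estimate alone --- the uniform resolvent bound must be used a second time.

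The standard repair keeps your framework but replaces the $O(|\lambda|^{-1})$ decay argument by the second resolvent identity. Fix any $\omega>0$; since the semigroup is a contraction, Plancherel applied to $t\mapsto e^{-\omega t}e^{tA}x$ gives the free estimate $\frac{1}{2\pi}\int_{\R}\|((\omega+i\lambda)I-A)^{-1}x\|^2d\lambda\leq\frac{1}{2\omega}\|x\|^2$ for \emph{every} $x\in H$. Writing
$$((\epsilon+i\lambda)I-A)^{-1}=\Bigl[I+(\omega-\epsilon)\bigl((\epsilon+i\lambda)I-A\bigr)^{-1}\Bigr]\bigl((\omega+i\lambda)I-A\bigr)^{-1}$$
and using your strip bound $\tilde M$ on the first factor gives $\int_{\R}\|((\epsilon+i\lambda)I-A)^{-1}x\|^2d\lambda\leq(1+\omega\tilde M)^2\frac{\pi}{\omega}\|x\|^2$ uniformly in $\epsilon\in(0,\omega]$, hence $\int_0^\infty\|e^{tA}x\|^2dt\leq C\|x\|^2$ for all $x\in H$ by Fatou, and Datko's theorem applies directly (no adjoint semigroup needed). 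The necessity direction and the Neumann-series extension of the bound to a strip are correct as you wrote them.
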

\noindent  Also, concerning the characterization of polynomial stability of a $C_0-$semigroup of contraction  $(e^{tA})_{t\geq 0}$ we rely on the following result due to Borichev and Tomilov \cite{Borichev01} (see also \cite{RaoLiu01} and \cite{Batty01}). 

\begin{theoreme}\label{bt}
Assume that $A$ is the generator of a strongly continuous semigroup of contractions $\left(e^{tA}\right)_{t\geq0}$  on $H$.   If   $ i\mathbb{R}\subset \rho(A)$, then for a fixed $\ell>0$ the following conditions are equivalent
%%%%%%%%%%%%%%%%
\begin{equation}\label{h1}
\sup_{\lambda\in\mathbb{R}}\left\|\left(i\lambda I-A\right)^{-1}\right\|_{\mathcal{L}\left(H\right)}=O\left(|\lambda|^\ell\right),
\end{equation}
\begin{equation}\label{h2}
\|e^{tA}U_{0}\|^2_{H} \leq \frac{C}{t^{\frac{2}{\ell}}}\|U_0\|^2_{D(A)},\hspace{0.1cm}\forall t>0,\hspace{0.1cm} U_0\in D(A),\hspace{0.1cm} \text{for some}\hspace{0.1cm} C>0.
\end{equation}
\end{theoreme}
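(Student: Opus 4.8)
The plan is to prove the equivalence following the argument of Borichev and Tomilov, establishing the substantial implication \eqref{h1}$\Rightarrow$\eqref{h2} first and then the softer converse. The decisive point throughout is that $H$ is a Hilbert space: on a general Banach space the same resolvent bound yields the decay only up to a logarithmic factor, and it is precisely the inner product structure that permits the sharp rate $t^{-2/\ell}$.

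First I would extend the resolvent off the imaginary axis. Writing $R(z):=(zI-A)^{-1}$ and using $(zI-A)=(i\la I-A)\bigl[I+(z-i\la)R(i\la)\bigr]$, the Neumann series $R(z)=\bigl[I+(z-i\la)R(i\la)\bigr]^{-1}R(i\la)$ converges whenever $|z-i\la|\,\|R(i\la)\|<1$. Combined with $i\R\subset\rho(A)$ and the bound $\|R(i\la)\|\leq C(1+|\la|)^{\ell}$ from \eqref{h1}, this shows that $R(\cdot)$ is holomorphic and bounded by $C'(1+|\operatorname{Im}z|)^{\ell}$ on a parabolic neighbourhood $\Omega=\{z\in\C:\operatorname{Re}z>-c(1+|\operatorname{Im}z|)^{-\ell}\}$ of the imaginary axis, for suitable $c,C'>0$. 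This step is routine.

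The core of the argument is to turn this resolvent estimate into the decay bound. For $x\in D(A)$ set $u(t)=e^{tA}x$; on the Hilbert space $H$ the Plancherel theorem relates the $L^{2}$-norm in $t$ of $u$ to an $L^{2}$-norm of the resolvent along vertical lines. Since $R(i\la)$ itself grows, I would regularise by working with $A^{-1}x$ and using the resolvent identity $R(i\la)A^{-1}=(i\la)^{-1}\bigl(R(i\la)+A^{-1}\bigr)$ to gain an extra power of $\la^{-1}$ in the relevant integrals. Feeding the boundary bounds on $\Omega$ into an operator-valued Paley--Wiener / $H^{2}$ argument then yields $\sup_{t>0}t^{1/\ell}\,\|e^{tA}A^{-1}\|<\infty$, that is $\|e^{tA}x\|\leq C\,t^{-1/\ell}\|x\|_{D(A)}$ for $x\in D(A)$; squaring gives \eqref{h2}. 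I expect this to be the main obstacle: the sharp passage from polynomial resolvent growth to polynomial decay with no logarithmic loss, which is exactly where the Hilbert space hypothesis is indispensable.

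For the converse \eqref{h2}$\Rightarrow$\eqref{h1}, I would use the Laplace representation $R(\mu)A^{-1}=\int_{0}^{\infty}e^{-\mu t}e^{tA}A^{-1}\,dt$ for $\operatorname{Re}\mu>0$, let $\operatorname{Re}\mu\downarrow0$, and estimate the resulting oscillatory integral using the decay $\|e^{tA}A^{-1}\|=O(t^{-1/\ell})$ together with the uniform boundedness of the semigroup near $t=0$; splitting the integral at $t\sim|\la|^{-1}$ and integrating by parts in the tail produces the bound $\|R(i\la)\|=O(|\la|^{\ell})$, after which a density argument extends the estimate from $\operatorname{Ran}(A^{-1})$ to all of $H$. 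This direction is comparatively elementary and does not rely on the Hilbert space structure.
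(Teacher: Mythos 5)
The paper does not prove Theorem \ref{bt}: it is quoted verbatim from Borichev--Tomilov \cite{Borichev01} (see also \cite{Batty01}) as a black box in the Appendix, so there is no in-paper proof to compare yours against. Judged on its own, your outline correctly identifies the architecture of the known proof -- the hard implication is \eqref{h1}$\Rightarrow$\eqref{h2}, the Hilbert-space structure enters through a Plancherel-type argument, and one regularises by composing with $A^{-1}$ -- but as a proof it is incomplete in both directions. For the forward implication, everything is deferred to ``an operator-valued Paley--Wiener / $H^2$ argument'': that single sentence is the entire content of the theorem, and the actual Borichev--Tomilov argument is not a direct Paley--Wiener application but a more delicate functional-calculus estimate; naming the tool is not the same as running it.

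The converse, as literally written, does not work. Since $\|e^{tA}A^{-1}\|=O(t^{-1/\ell})$ is not integrable at infinity for $\ell\geq 1$, the Laplace representation $R(\mu)A^{-1}=\int_0^\infty e^{-\mu t}e^{tA}A^{-1}\,dt$ does not survive the limit $\operatorname{Re}\mu\downarrow 0$, and integrating by parts in the tail replaces $e^{tA}A^{-1}x$ by $e^{tA}x$, which is merely bounded, so no decay is gained and the tail still diverges. Moreover the splitting point $t\sim|\lambda|^{-1}$ is the wrong scale. The standard elementary argument is instead the quantified Duhamel identity: from $\frac{d}{d\tau}\bigl(e^{-i\lambda\tau}e^{\tau A}x\bigr)=e^{-i\lambda\tau}e^{\tau A}(A-i\lambda)x$ with $x=R(i\lambda)y$ one gets $\|R(i\lambda)y\|\leq |\lambda|\,\|e^{tA}A^{-1}\|\,\|R(i\lambda)y\|+\|e^{tA}A^{-1}\|\,\|y\|+tM\|y\|$, and choosing $t\sim|\lambda|^{\ell}$ so that $|\lambda|\,\|e^{tA}A^{-1}\|\leq\tfrac12$ absorbs the first term and yields $\|R(i\lambda)\|=O(|\lambda|^{\ell})$. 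You should either supply that argument or, as the paper does, simply cite \cite{Borichev01}.
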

\noindent Finally, the analytic property of a $C_0-$semigroup of contraction  $(e^{tA})_{t\geq 0}$ is characterized in the following theorem due to Arendt, Batty and Hieber \cite{arendtbatty02}.

\begin{theoreme} \label{analytic}
Let $(\mathcal{S}(t)= e^{tA})_{t \geq 0 }$ be a $C_{0}-$semigroup of contractions in a Hilbert space. Assume that  
\begin{equation}\tag{{\rm {A1}}}
i\R\subset \rho (A).
\end{equation}
 Then, $(e^{tA})_{t \geq 0}$ is analytic if and only if 
\begin{equation}\tag{{\rm {A2}}}
\displaystyle{\limsup_{\lambda \in \mathbb{R}, |\lambda| \rightarrow \infty}}\;\frac{1}{|\lambda|^{-1}} \norm{(i\lambda-A)^{-1}}_{\mathcal{L}(H)}< \infty.  \end{equation}
\end{theoreme}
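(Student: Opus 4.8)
The plan is to establish the two implications of the equivalence by passing through the classical sectorial characterization of bounded analytic semigroups: $(e^{tA})_{t\geq 0}$ is analytic if and only if there exist $\delta\in(0,\tfrac{\pi}{2})$ and $C>0$ such that the sector $\Sigma_\delta:=\{\mu\in\C\setminus\{0\}:\ \abs{\arg\mu}<\tfrac{\pi}{2}+\delta\}$ is contained in $\rho(A)$ and $\norm{(\mu I-A)^{-1}}_{\mathcal{L}(H)}\leq C/\abs{\mu}$ for all $\mu\in\Sigma_\delta$. I would therefore reduce the statement to showing that, for a contraction semigroup satisfying $\mathrm{(A1)}$, this sectorial estimate is equivalent to the resolvent bound $\mathrm{(A2)}$ on the imaginary axis.

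The implication ``analytic $\Rightarrow\mathrm{(A2)}$'' is immediate: restricting the sectorial estimate to $\mu=i\lambda$ gives $\norm{(i\lambda I-A)^{-1}}\leq C/\abs{\lambda}$ for all large $\abs{\lambda}$, which is exactly $\mathrm{(A2)}$.

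For the converse I would argue by a Neumann--series perturbation in the real direction. Assuming $\mathrm{(A2)}$, there are $\lambda_0>0$ and $M>0$ with $\norm{(i\lambda I-A)^{-1}}\leq M/\abs{\lambda}$ for $\abs{\lambda}\geq\lambda_0$. For $\mu=\sigma+i\lambda$ I would write
\[
\mu I-A=(i\lambda I-A)\bigl(I+\sigma(i\lambda I-A)^{-1}\bigr),
\]
so that whenever $\abs{\sigma}\,\norm{(i\lambda I-A)^{-1}}\leq\tfrac12$ the bracketed factor is invertible with norm at most $2$; hence $\mu\in\rho(A)$ and $\norm{(\mu I-A)^{-1}}\leq 2M/\abs{\lambda}$. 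Since this holds as soon as $\abs{\sigma}\leq\abs{\lambda}/(2M)$, it controls the resolvent on the parabolic neighbourhood $\{\abs{\Re\mu}\leq\abs{\Im\mu}/(2M),\ \abs{\Im\mu}\geq\lambda_0\}$, where $\abs{\mu}\sim\abs{\lambda}$ and thus $\norm{(\mu I-A)^{-1}}\leq C'/\abs{\mu}$. On the complementary region $\Re\mu\geq\abs{\Im\mu}/(2M)$ I would instead use contractivity, which gives $\{\Re\mu>0\}\subset\rho(A)$ with $\norm{(\mu I-A)^{-1}}\leq 1/\Re\mu\leq C''/\abs{\mu}$, because there $\Re\mu\geq c\abs{\mu}$ with $c=(1+4M^2)^{-1/2}$. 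These two regions together cover a genuine sector $\abs{\arg\mu}<\tfrac{\pi}{2}+\arctan\tfrac{1}{2M}$ for $\abs{\mu}$ large, on which the resolvent obeys the required $C/\abs{\mu}$ bound; the remaining bounded piece $\{\abs{\Im\mu}<\lambda_0\}\cap\Sigma_\delta$ is handled by $\mathrm{(A1)}$ together with continuity and compactness. This furnishes the sectoriality estimate, hence analyticity.

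The step I expect to be the main obstacle is the patching at the interface of the three regimes --- the contraction estimate in the interior of the right half-plane, the Neumann-perturbation estimate in a parabolic neighbourhood of $i\R$ reaching into the left half-plane, and the bounded spectral strip $\abs{\Im\mu}<\lambda_0$ --- into a single uniform bound $\norm{(\mu I-A)^{-1}}\leq C/\abs{\mu}$ valid on one fixed sector of half-angle strictly greater than $\tfrac{\pi}{2}$. Once the Neumann-series step and the elementary geometry of these regions are in place, the rest is routine.
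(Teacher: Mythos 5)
The paper does not prove this statement: Theorem \ref{analytic} is quoted in the Appendix as a known characterization due to Arendt, Batty and Hieber \cite{arendtbatty02}, so there is no in-paper argument to compare yours against. Judged on its own, your proof is the standard one and is essentially correct: reduce to the sectorial characterization of bounded analytic semigroups, get the forward implication by restricting the sectorial estimate to $i\R$, and get the converse by the Neumann-series identity $\mu I-A=(i\lambda I-A)\bigl(I+\sigma(i\lambda I-A)^{-1}\bigr)$ together with the Hille--Yosida contraction bound $\norm{(\mu I-A)^{-1}}\leq 1/\Re\mu$ on the open right half-plane. Two small points of hygiene. First, the set you call ``the remaining bounded piece'', namely $\{\abs{\Im\mu}<\lambda_0\}\cap\Sigma_\delta$, is not bounded (it contains the whole positive real ray); what is genuinely left over after the contractivity region $\Re\mu\geq c\abs{\mu}$ and the Neumann-series region is only the part of the strip with $\abs{\Re\mu}$ small, and that part is bounded, lies in the open neighbourhood of the compact set $i[-\lambda_0,\lambda_0]\subset\rho(A)$ guaranteed by {\rm (A1)} once you choose $\tan\delta$ small enough (say $\tan\delta\leq\min\{1/(2M),\,\varepsilon/\lambda_0\}$), and there $\sup\abs{\mu}\,\norm{(\mu I-A)^{-1}}<\infty$ by continuity on a compact subset of $\rho(A)$. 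Second, your forward implication tacitly takes ``analytic'' to mean bounded holomorphic on proper subsectors, which is the convention of the cited reference and the one under which the sectorial resolvent estimate is equivalent to analyticity; it is worth saying so explicitly, since a holomorphic extension without uniform bounds on subsectors would not immediately yield {\rm (A2)}. With those clarifications the patching you were worried about is routine, and the argument is complete.
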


\section{Conclusion}
We have studied the stabilization of a system of locally coupled wave equations with  only one  internal localized  Kelvin-Voigt damping via  non-smooth coefficients. We proved the strong stability of the system using Arendt-Batty criteria. Lack of exponential stability results has been proved in both cases: The case of global Kelvin-Voigt damping and the case of  localized Kelvin-Voigt damping, taking into consideration that the coupling is global. In addition, if both coupling and damping are localized internally via non-smooth coefficients,  we established a polynomial energy decay rate of type $t^{-1}$. We can conjecture that the energy decay rate $t^{-1}$ is optimal. However, if the intersection between the supports of the domains of the damping and the coupling coefficients is empty, the nature of the decay rate of the system will be unknown. This question is still an open problem.
%%%%%%%%%%%%%%%%%%%%%%%%%%%%%%%%%%%%%%%%%%%%%%%%%%%%%%%%%%%%%%%%%%%%%%

\section*{Acknowledgments}  

\noindent The authors thanks professors Michel Mehrenberger and Kais Ammari for their valuable discussions and comments.\\

\noindent Mohammad Akil would like to thank the Lebanese University for its support. \\

\noindent Ibtissam Issa would like to thank the Lebanese University for its support. \\

\noindent Ali Wehbe would like to thank the CNRS and the LAMA laboratory of Mathematics of the Universit\'e Savoie Mont Blanc for their supports.

%\protect\bibliographystyle{abbrv}
%\protect\bibliographystyle{alpha}
%\bibliography{References}

\end{document}